\newcommand{\eps}{\varepsilon}
\newcommand{\bN}{\mathbb{N}}
\newcommand{\bR}{\mathbb{R}}
\newcommand{\bC}{\mathbb{C}}
\newcommand{\mcE}{\mathcal{E}}
\newcommand{\fe}{\mathrm{e}}
\newcommand{\nd}{\lambda}
\newcommand{\be}{\begin{equation}}
\newcommand{\ee}{\end{equation}}
\newcommand{\ba}{\begin{array}}
\newcommand{\ea}{\end{array}}
\newcommand{\bea}{\begin{eqnarray}}
\newcommand{\eea}{\end{eqnarray}}
\newcommand{\beas}{\begin{eqnarray*}}
\newcommand{\eeas}{\end{eqnarray*}}
\numberwithin{equation}{section}
\begin{document}
\title[Multiscale time integrators for oscillatory equations]
{Uniformly accurate multiscale time integrators for highly
oscillatory second order differential equations}


 \author[Weizhu Bao et.~al.]{Weizhu Bao\affil{1}\comma\corrauth,
       Xuanchun Dong\affil{2}, and Xiaofei Zhao\affil{3}}
 \address{\affilnum{1}\ Department of Mathematics and Center for Computational
              Science and Engineering,
          National University of Singapore,
          Singapore 119076, Singapore. \\
           \affilnum{2}\ Beijing Computational Science Research Center, Beijing 100084, P. R. China.\\
           \affilnum{3}\ Department of Mathematics,
          National University of Singapore,
          Singapore 119076, Singapore. \\}
 \emails{{\tt matbaowz@nus.edu.sg} (W.~Bao), {\tt dong.xuanchun@gmail.com} (X.~Dong),
          {\tt zhxfnus@gmail.com} (X.~Zhao)}

\begin{abstract}
In this paper, two multiscale time integrators (MTIs), motivated from two types of multiscale
decomposition by either frequency or frequency and amplitude, are proposed and analyzed for
solving highly oscillatory second order differential equations
with a dimensionless parameter $0<\eps\le1$. In fact, the solution to this equation
propagates waves with wavelength at $O(\eps^2)$ when  $0<\eps\ll 1$, which brings significantly
numerical burdens in practical computation. We
rigorously establish  two independent error bounds for the two MTIs
at $O(\tau^2/\eps^2)$ and $O(\eps^2)$ for $\eps\in(0,1]$ with $\tau>0$ as step size, which
imply that the two MTIs converge uniformly with linear convergence rate
at $O(\tau)$ for  $\eps\in(0,1]$ and optimally with quadratic convergence rate at $O(\tau^2)$
in the regimes when either $\eps=O(1)$ or $0<\eps\le \tau$.
Thus the meshing strategy requirement (or $\eps$-scalability) of the two MTIs
is $\tau=O(1)$ for $0<\eps\ll 1$,
which is significantly improved from $\tau=O(\eps^3)$ and $\tau=O(\eps^2)$
requested by finite difference methods and exponential wave integrators to
the equation, respectively.  Extensive numerical tests and comparisons with
those classical numerical integrators are reported, which gear towards better
understanding  on the convergence and resolution properties of  the two MTIs.
In addition, numerical results support the two error bounds very well.
\end{abstract}

\ams{65L05, 65L20, 65L70} \clc{O2} \keywords{Highly oscillatory differential equations,
multiscale time integrator, uniformly accurate, multiscale decomposition, exponential wave integrator.}

\maketitle

\section{Introduction}
\label{sec1}
This paper is devoted to the study of numerical solutions of the following highly
oscillatory second order  differential equations (ODEs)
\begin{equation}\label{WODEs}
\left\{
  \begin{split}
    & \eps^2\ddot{\mathbf{y}}(t)+A\mathbf{y}(t)+\frac{1}{\eps^2}\mathbf{y}(t)
    +\mathbf{f}\left(\mathbf{y}(t)\right)=0,\quad t>0,\\
    & \mathbf{y}(0)=\Phi_1,\quad\dot{\mathbf{y}}(0)=\frac{\Phi_2}{\eps^2}.
  \end{split}
\right.
\end{equation}
Here $t$ is time, $\mathbf{y}:=\mathbf{y}(t)=(y_1(t),\ldots,y_d(t))^T\in\bC^d$ is a
complex-valued vector function with $d$ a positive integer, $\dot{\mathbf{y}}$ and $\ddot{\mathbf{y}}$
refer to the first and second order derivatives of $\mathbf{y}$, respectively, $0<\eps\leq 1$
is a dimensionless parameter which can be very small in some limit regimes,
$A\in\bR^{d\times d}$ is a symmetric nonnegative definite matrix,
$\Phi_{1},\ \Phi_{2} \in\bC^d$ are two given
initial data at $O(1)$ in term of  $0<\eps\ll 1$, and
$\mathbf{f}(\mathbf{y})=(f_1(\mathbf{y}),\ldots,f_d(\mathbf{y}))^T:\ \bC^d \to\bC^d$
describes the nonlinear interaction and it is
independent of $\eps$. The {\sl gauge invariance} implies that $\mathbf{f}(\mathbf{y})$
satisfies the following  relation \cite{Masmoudi}
\begin{equation}\label{gauge}
\mathbf{f}(\fe^{is}\mathbf{y})=\fe^{is}\mathbf{f}(\mathbf{y}), \qquad \forall s\in \bR.
\end{equation}
We remak that when the initial data $\Phi_1,\Phi_2\in\bR^d$ and $\mathbf{f}(\mathbf{y}):\ \bR^d \to\bR^d$,
then the solution $\mathbf{y}\in \bR^d$ is real-valued. In this case,
the gauge invariance condition (\ref{gauge}) for the nonlinearity in
(\ref{WODEs}) is no longer needed.

The above problem is motivated from our recent numerical study of
the nonlinear Klein--Gordon equation in the nonrelativistic limit regime
\cite{Dong,Machihara, Masmoudi}, where $0<\eps\ll 1$ is scaled to be
inversely proportional to the speed of light. In fact, it can be
viewed as a model resulting from a semi-discretization
in space, e.g., by finite difference or spectral discretization with a fixed mesh size
(see detailed equations (3.3) and (3.19) in \cite{Dong}), to the
nonlinear Klein--Gordon equation.
In order to propose new multiscale time integrators (MTIs) and compare
with those classical numerical integrators including finite difference
methods \cite{Dong, Duncan,Strauss,Reich} and exponential wave integrators
\cite{Garcia,Sanz,Grimm,Lubich1,Lubich2} efficiently, we thus focus on the above second order differential equations instead of the original nonlinear Klein-Gordon equation. The solution to (\ref{WODEs}) propagates
high oscillatory waves with  wavelength at $O(\eps^2)$ and amplitude at $O(1)$.
To illustrate this, Figure \ref{fig:00} shows the solutions of (\ref{WODEs}) with
$d=2$, $f_1(y_1,y_2)=y_1^2y_2$, $f_2(y_1,y_2)=y_2^2y_1$, $A={\rm diag}(2,2)$,
$\Phi_1=(1,0.5)^T$ and $\Phi_2=(1,2)^T$ for different $\eps$.
The highly oscillatory nature of solutions to (\ref{WODEs}) causes
severe burdens in practical computation, making the numerical
approximation extremely challenging and costly in the regime of
$0<\eps\ll 1$.

\begin{figure}
\centerline{\psfig{figure=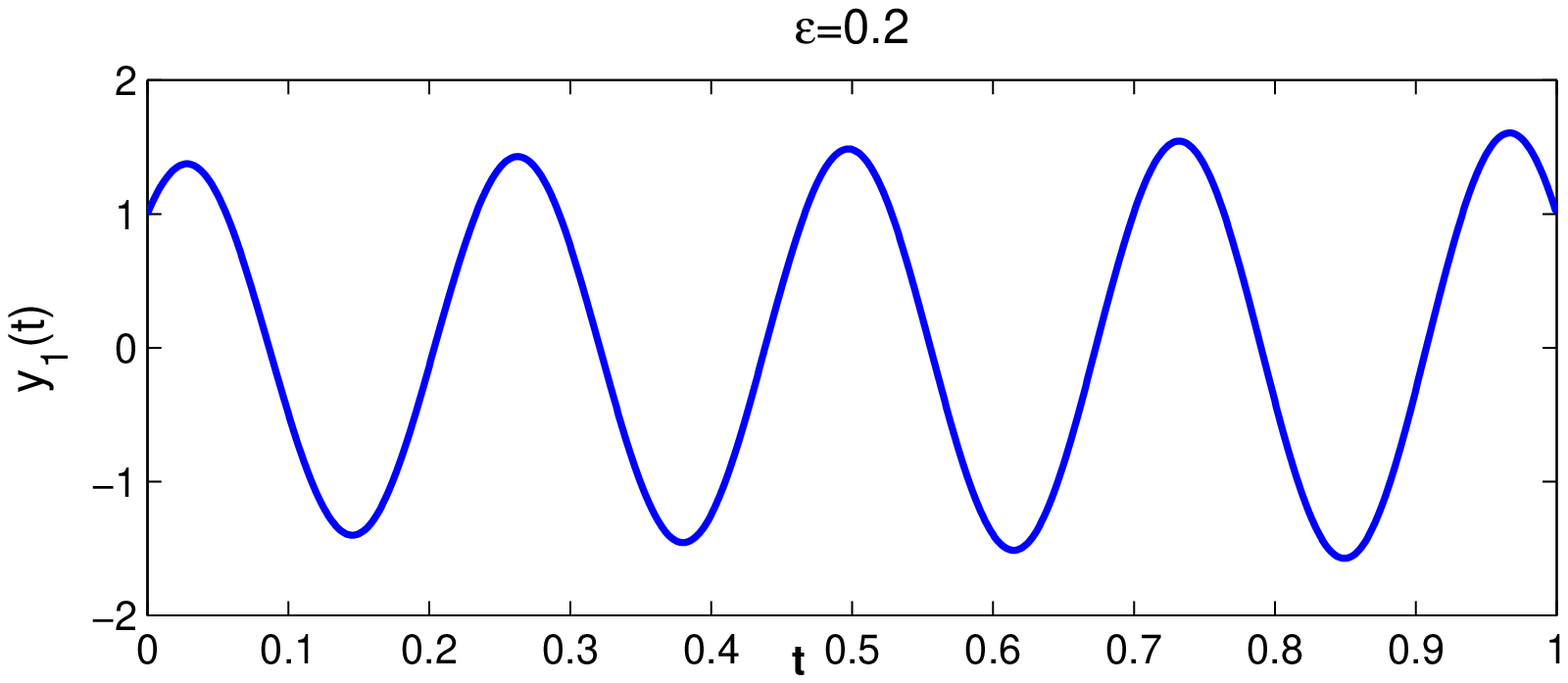,height=4cm,width=7.5cm,angle=0}
\psfig{figure=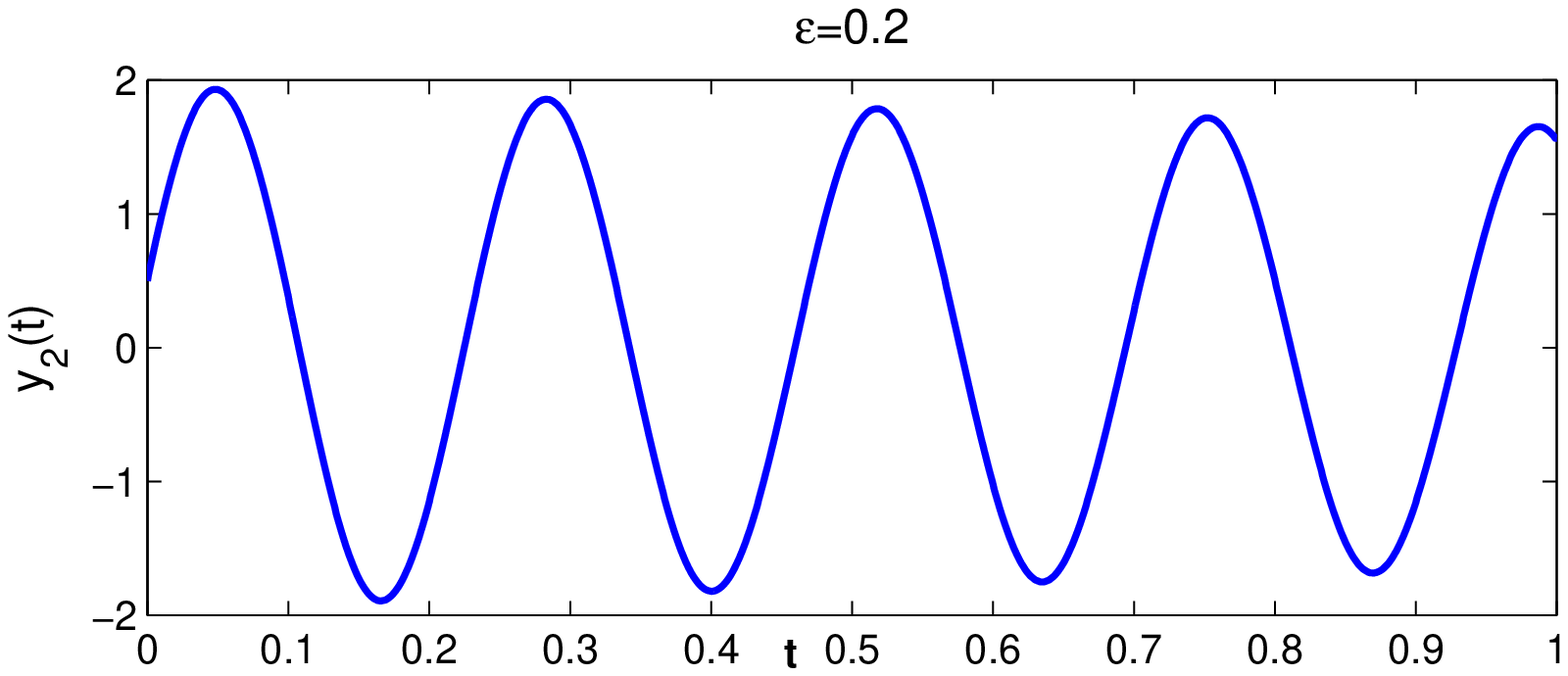,height=4cm,width=7.5cm,angle=0}}
\centerline{\psfig{figure=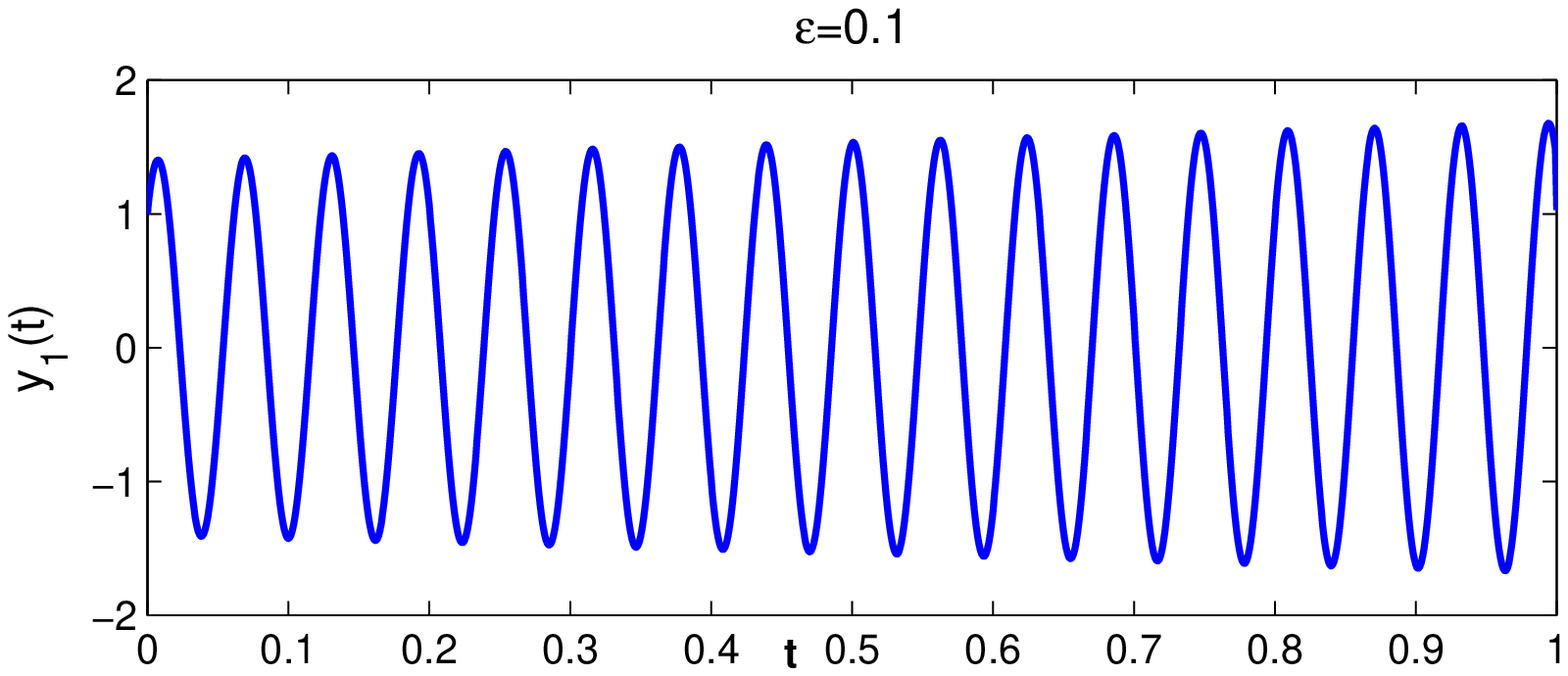,height=4cm,width=17cm}}
\centerline{\psfig{figure=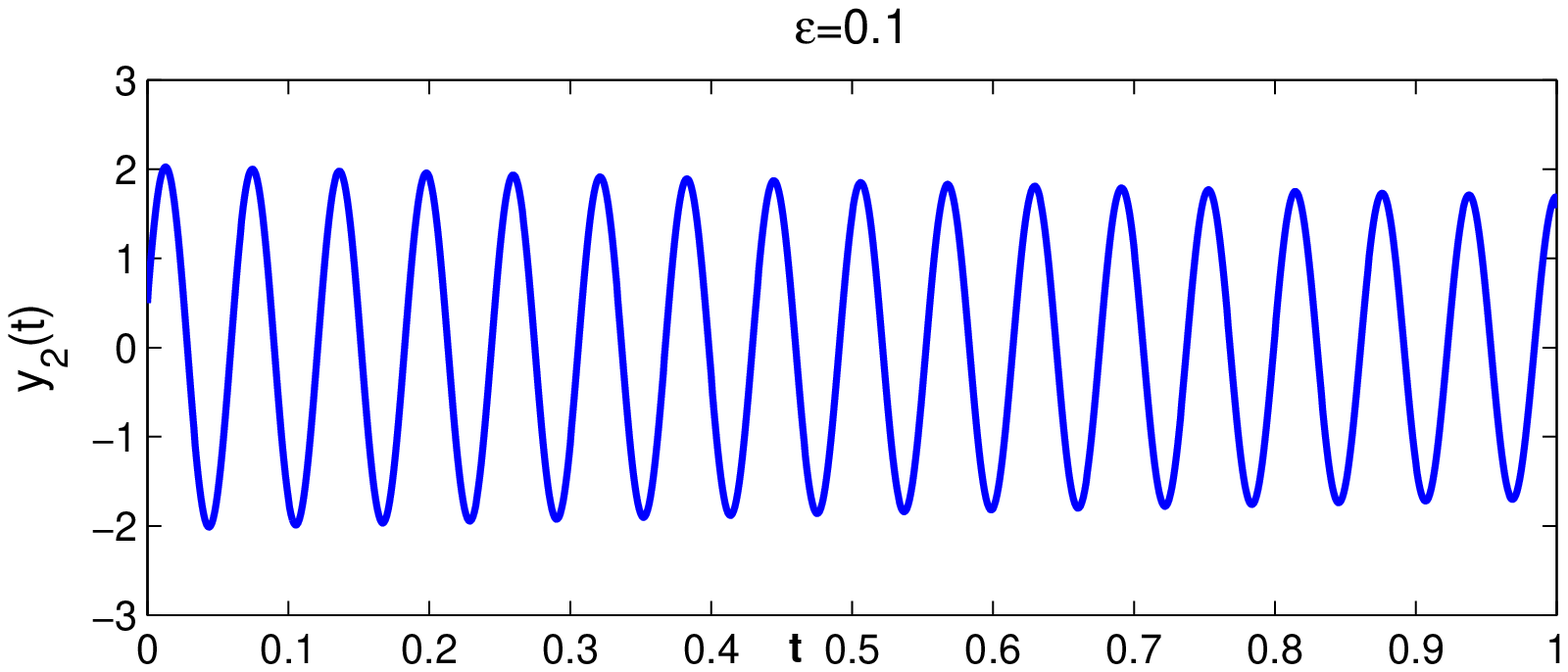,height=4cm,width=17cm}}
\centerline{\psfig{figure=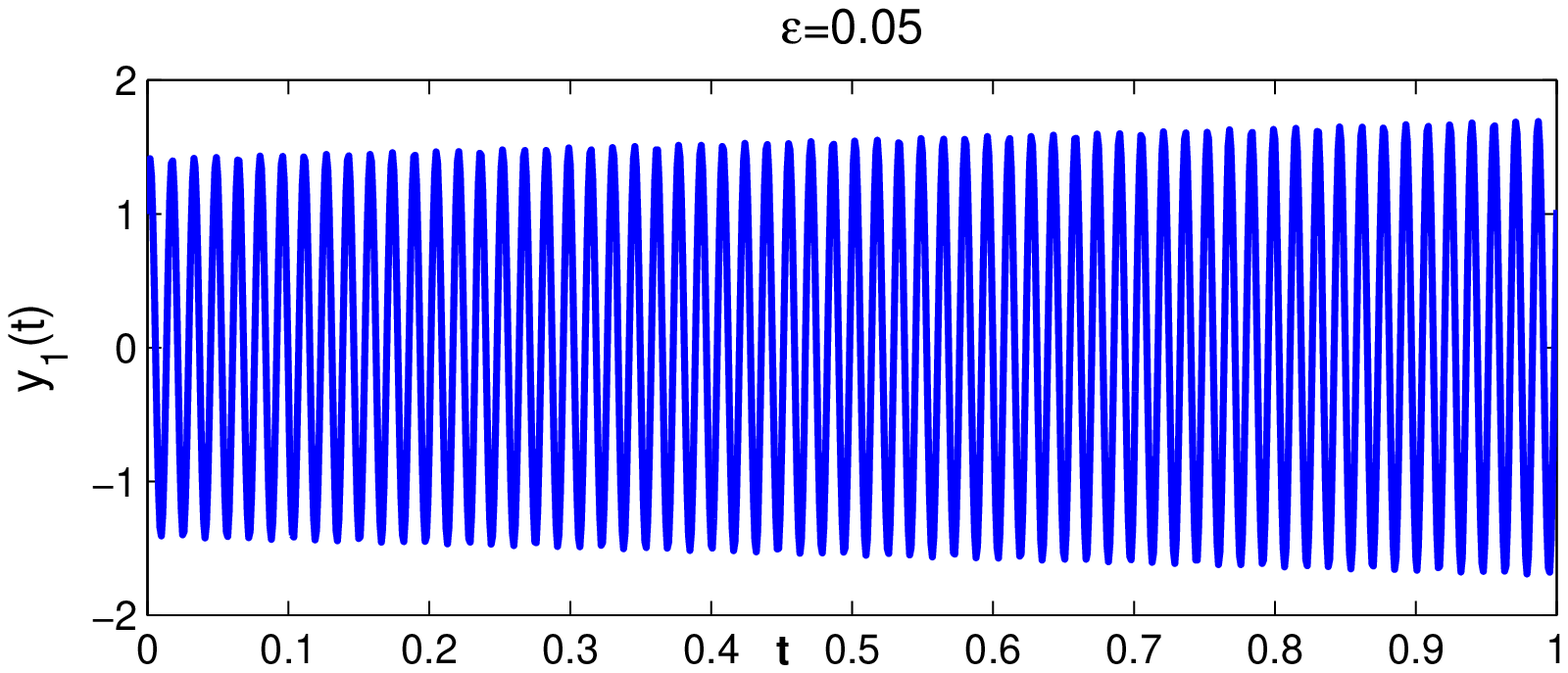,height=4cm,width=17cm}}
\centerline{\psfig{figure=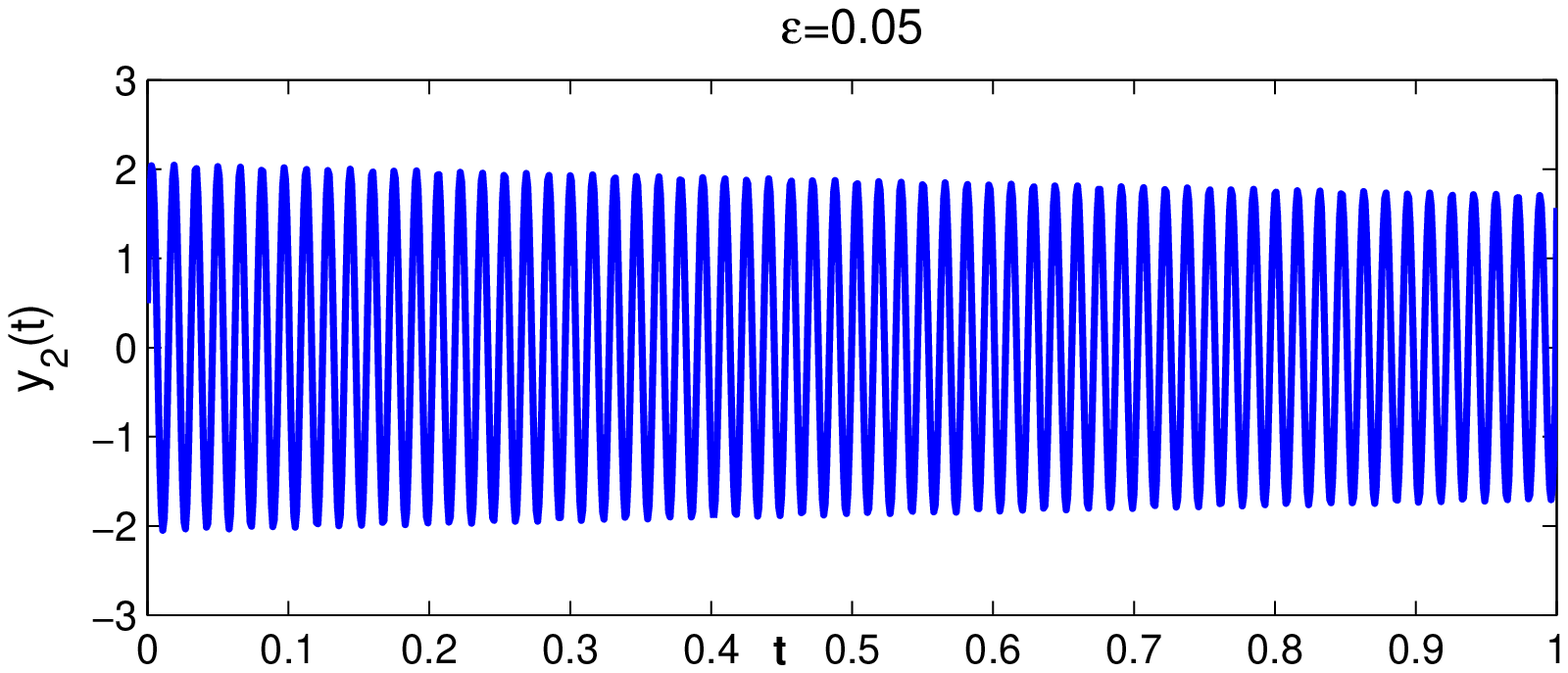,height=4cm,width=17cm}}
\caption{Time evolution of the solutions of (\ref{WODEs})
with $d=2$ for different $\eps$.}\label{fig:00}
\end{figure}

For the global well-posedness of the model problem (\ref{WODEs}), we refer to
 \cite{Hairer1,Hairer2} and references therein.
For simplicity of notations, we will present our methods and comparison for
(\ref{WODEs}) in its simplest case, i.e. $d=1$, as
\begin{equation}\label{WODE}
\left\{
  \begin{split}
    & \eps^2\ddot{y}(t)+\left(\alpha+\frac{1}{\eps^2}\right)y(t)
    +f\left(y(t)\right)=0,\quad t>0,\\
    & y(0)= \phi_1,\quad\dot{y}(0)=\frac{\phi_2}{\eps^2},
  \end{split}
\right.
\end{equation}
where $y=y(t)\in\bC$ is a complex-valued scalar function, $\alpha \geq 0$ is a real
constant, $\phi_{1},\ \phi_{2} \in\bC$, and $f(y):\ \bC \to\bC$.
In particular, in many applications
\cite{Ginibre1,Ginibre2,Glassey1,Glassey2,Machihara, Masmoudi,Simon,Pecher,Segal},
$f(y)$ is taken as the {\sl pure power} nonlinearity as
\begin{equation}\label{power}
f(y)=g(|y|^2)y, \ \hbox{with}\  g(\rho)=\lambda \rho^p\ \hbox{for some}
\ \lambda\in \bR, \ p\in\bN_0:=\bN\cup\{0\}.
\end{equation}
 In addition, if $f$ is taken
 as the pure power nonlinearity (\ref{power}), it is easy to see that
 (\ref{WODE}) conserves the Hamiltonian or total energy, which is given by
\begin{eqnarray}
  E(t)&:=&\eps^2\left|\dot{y}(t)\right|^2
  +\left(\alpha+\frac{1}{\eps^2}\right)\left|y(t)\right|^2
  +F\left(|y(t)|^2\right)\nonumber\\
  &\equiv& \frac{1}{\eps^2} \left|\phi_2\right|^2+
  \left(\alpha+\frac{1}{\eps^2}\right)\left|\phi_1\right|^2
  + F\left(|\phi_1|^2\right):= E(0),\quad t\geq0,\label{energy}
\end{eqnarray}
with $F(\rho)=\int_0^{\rho}g(\rho^\prime)d \rho^\prime$.
Although the numerical methods and their error estimates in this paper
are for the model problem (\ref{WODE}), they can be easily extended to
solve the problem (\ref{WODEs}).  Similar to
the nonlinear Klein-Gordon equation in the
nonrelativistic limit regime \cite{Machihara, Masmoudi},
when $0<\eps\ll 1$, the total energy
$E(t)=O(\eps^{-2})$, i.e., it is unbounded when $\eps\to0$,
with the given initial data in (\ref{WODE}).

We remark here that the model problem (\ref{WODE}) is quite different with
the following  oscillatory second order differential equation arising
from molecular dynamics \cite{Lubich1,Lubich2,Cohen,Cohen1,Sanz,Grimm}
\begin{equation}\label{eq of MD}
\left\{\begin{split}
&\ddot{y}(t)+\frac{1}{\eps^2}y(t)+f(y(t))=0,\qquad t>0,\\
&y(0)=\eps \phi_1, \quad \dot{y}(0)=\phi_2.
\end{split}\right.
\end{equation}
In fact, the above problem (\ref{eq of MD}) propagates waves with wave length and amplitude both at
$O(\varepsilon)$, where the problem (\ref{WODE}) propagates waves with wave length at $O(\varepsilon^2)$
and amplitude at $O(1)$, and thus the oscillation in the problem (\ref{WODE}) is much more oscillating
and wild. In addition, dividing $\eps^2$ on both sides of the model equation (\ref{WODE}), we obtain
\begin{equation}\label{sp1}
\ddot{y}+\frac{\alpha\eps^2+1}{\eps^4}y+\frac{1}{\eps^2}f(y)=0.
\end{equation}
Of course, when $\eps=O(1)$, both (\ref{eq of MD}) and (\ref{sp1}) are perturbations to the harmonic oscillator.
However, in the regime of $0<\eps\ll1$, due to the factor $\frac{1}{\eps^2}$ in front of the nonlinear function,
the nonlinear term in (\ref{sp1}) is not a small perturbation to the harmonic oscillator! Resonance may
occur at time $t=O(1)$.
Another major difference is that the energy of the problem (\ref{eq of MD}) is uniformly bounded for
$\varepsilon\in(0,1]$, where it is unbounded in the problem (\ref{WODE}) when $\varepsilon\to0$.
Different efficient and accurate numerical methods, including finite difference methods \cite{Dong,Duncan},
Gautschi type methods or exponential wave integrators (EWIs) \cite{Lubich1,Cohen,Grimm},
modified impulse methods \cite{Lubich2,Cohen1,Sanz}, modulated Fourier expansion methods \cite{Lubich2,Cohen1,Sanz,Grimm}, heterogeneous multiscale methods \cite{Engquist}, flow averaging \cite{Owhadi},
 Stroboscopic averaging \cite{Cast} and Yong measure approach \cite{Artsein}
have been proposed and analyzed as well as compared
for the problem  (\ref{eq of MD}) in the literatures,
especially in the regime when $0<\varepsilon\ll1$.
In addition, the modulated Fourier expansion has been developed as a powerful analytical tool
for analyzing the oscillating structures of the problem (\ref{eq of MD}) \cite{Cohen,Cohen1,Grimm} and has been
used to design numerical methods for the problem (\ref{eq of MD}) and linear
second-order ODEs with stiff source terms \cite{Cohen,Cohen1,Sanz,Grimm,Condon}.
Based on the results in the literatures \cite{Lubich1,Lubich2,Cohen,Cohen1,Sanz,Grimm},
both the Gautschi type methods and modulated Fourier expansion methods preserve essentially the
total energy and/or oscillatory energy over long times and converge uniformly for $\varepsilon\in (0,1]$
for the problem (\ref{eq of MD}). However, based on the results in \cite{Dong}, all the above numerical methods
do {\sl not} converge uniformly for $\varepsilon\in (0,1]$ for the problem (\ref{WODE})
which usually arise from quantum and plasma physics. In fact,
for existing numerical methods to solve the problem (\ref{WODE}), in order to capture `correctly'
the oscillatory solutions,
one has to restrict the time step $\tau$ in a numerical integrator to be
quite small when $0<\eps\ll 1$.  For instance, as suggested by the
rigorous results in \cite{Dong}, for the frequently used finite
difference (FD) time integrators in the literature \cite{Dong,Duncan,Strauss},
such as energy conservative, semi-implicit and explicit ones, the
meshing strategy requirement (or $\eps$-scalability) is $\tau=O(\eps^3)$ \cite{Dong}.
 Also, a class of trigonometric integrators which solves the
 linear part of (\ref{WODE}) exactly \cite{Dong,Grimm,Lubich1,Lubich2,Sanz,Garcia},
 namely the exponential wave integrators (EWIs), require $\tau=O(\eps^2)$
 for nonlinear problems \cite{Dong}.  In view of that the solutions to
 (\ref{WODE}) are highly oscillatory with  wavelength at $O(\eps^2)$,
 the EWIs could be viewed as the optimal one among the methods which
 integrate the oscillatory problem (\ref{WODE}) directly.

The aim of this paper is to propose and analyze multiscale time
integrators (MTIs) to the problem (\ref{WODE}),
which will converge uniformly for $\eps\in(0,1]$ and thus possess
much better improved $\eps$-scalability than those
classical FD and EWI methods in the regime $0<\eps\ll1$,
by taking into account the sophisticated multiscale structures (see details in (2.2))
in frequency and/or amplitude
of the solutions to (\ref{WODE}).
In our methods, at each time interval,
we adopt an ansatz same as the one used in \cite{Machihara,Masmoudi},
then carry out multiscale decompositions of the solution to
(\ref{WODE}) by either frequency or
frequency and amplitude, and
obtain a coupled equations for two $O(1)$-in-amplitude non-oscillatory components and an $O(\eps^2)$-in-amplitude
oscillatory component. The coupled equations are then discretized by an explicit EWI method
\cite{Grimm,Lubich1,Lubich2} with proper chosen transmission conditions between different time intervals.
Our methods are different from the classical way of applying the modulated Fourier expansion methods
for oscillatory ODEs \cite{Cohen,Cohen1, Condon} in terms of
not only considering the leading order terms but also solving the equation of the remainder which is $O(\eps^2)$
in the pure power nonlinear case  so as  to design a uniformly convergent integrator for any $0<\eps\leq1$.
For the MTIs, we rigorously establish two independent error bounds at $O(\tau^2/\eps^2)$ and $O(\eps^2)$  for $\eps\in(0,1]$
by using the energy method and multiscale analysis \cite{Cai1,Cai2,Dong}.
These two error bounds immediately suggest that
the MTIs converge uniformly with linear convergence rate
at $O(\tau)$ for  $\eps\in(0,1]$ and optimally with quadratic convergence rate at $O(\tau^2)$
in the regimes when either $\eps=O(1)$ or $0<\eps\le \tau$.
Thus, the MTIs offer compelling advantages over those FD and EWI methods for the
problem (\ref{WODE}), especially when $0<\eps\ll1$.

The rest of this paper is organized as follows.  In Section \ref{sec:2}, we present
two multiscale decompositions for the solution of (\ref{WODE}) by either frequency or frequency and amplitude.
Two multiscale time integrators are proposed based on the two multiscale decompositions
and their error bounds are established rigorously when the nonlinearity $f$ satisfies
the power nonlinearity (\ref{power}) and the general nonlinearity (\ref{gauge})
in Sections \ref{sec:3} and \ref{sec:4}, respectively.  In Section \ref{sec:5}, for comparison reasons,  we present the classical FD and EWI discretizations to (\ref{WODE}) and show their rigorous error analysis by paying particular attention
on how error bounds depend on $\eps$ explicitly.  Numerical results are reported in Section \ref{sec:6}.  Finally, some concluding remarks are drawn in Section \ref{sec:7}.  Throughout this paper, we adopt the notation $A\lesssim B$ to represent that there exists a generic constant $C>0$, which is
independent of $\tau$ (or $n$) and $\eps$, such that $|A|\leq CB$.

\section{Multiscale decompositions}
\label{sec:2}

Let $\tau=\Delta t>0$ be the step size, and denote time steps by $t_n=n\tau$ for $n=0,1,\ldots$
In this section, we present multiscale decompositions for the solution of (\ref{WODE})
on the time interval $[t_n, t_{n+1}]$
with given initial data at $t=t_n$ as
\begin{equation}\label{Initial}
y(t_n)=\phi_1^n=O(1),\qquad \dot{y}(t_n)=\frac{\phi_2^n}{\eps^2}=O\left(\frac{1}{\eps^{2}}\right),
\end{equation}
by either frequency or frequency and amplitude.

\subsection{Multiscale decomposition by frequency (MDF)}

Similar to the analytical study of the nonrelativistic limit of the nonlinear Klein-Gordon equation
\cite{Machihara, Masmoudi},
we take an ansatz to the solution $y(t):=y(t_n+s)$ of (\ref{WODE}) on the time interval $[t_n, t_{n+1}]$  with (\ref{Initial}) as
\begin{equation}\label{ansatz}
y(t_n+s)=\fe^{is/\eps^2}z_+^n(s)+\fe^{-is/\eps^2}\overline{z_-^n}(s)
+r^n(s), \qquad 0\leq s\leq \tau.
\end{equation}
Here and after, $\bar{z}$ denotes the complex conjugate of a complex-valued function $z$.
Differentiating (\ref{ansatz}) with respect to $s$, we have
\begin{equation}\label{ansatad}
\dot{y}(t_n+s)=\fe^{is/\eps^2}\left[\dot{z}_+^n(s)+\frac{i}{\eps^2}z_+^n(s)\right]+\fe^{-is/\eps^2}
\left[\overline{\dot{z}_-^n}(s)-\frac{i}{\eps^2}\overline{z_-^n}(s)\right]+
\dot{r}^n(s).
\end{equation}
Plugging (\ref{ansatz}) into (\ref{WODE}), we get
\begin{eqnarray}\label{mdzpm0}
&&\left[2i\dot{z}_+^n(s)+\eps^2\ddot{z}_+^n(s)+\alpha z_+^n(s) \right]\fe^{is/\eps^2}
+\left[-2i\overline{\dot{z}_-^n}(s)+\eps^2\overline{\ddot{z}_-^n}(s)+\alpha \overline{z_-^n}(s)\right]\fe^{-is/\eps^2}
\nonumber\\
&&\quad +\eps^2\ddot{r}^n(s)+\left(\alpha+\frac{1}{\eps^2}\right)r^n(s)+f\left(y(t_n+s)\right)=0,\qquad 0\le s\le \tau.
\end{eqnarray}
Multiplying the above equation by $\fe^{-is/\eps^2}$ and $\fe^{is/\eps^2}$, respectively, we can decompose
the above equation into a coupled system for two $\eps^2$-frequency waves with the unknowns $z_\pm^n(s)$
and the rest frequency waves with the unknown $r^n(s)$ as
\begin{equation}\label{LSADz2}
\left\{
  \begin{split}
  & 2i\dot{z}_\pm^n(s)+\eps^2\ddot{z}_\pm^n(s)+\alpha z_\pm^n(s)+f_\pm\left(z_+^n(s),z_-^n(s)\right)=0,\quad 0<s\leq\tau, \\
  &\eps^2\ddot{r}^n(s)+\left(\alpha+\frac{1}{\eps^2}\right)r^n(s)+f_r\left(z_+^n(s),z_-^n(s),r^n(s);s\right)=0,
  \end{split}
  \right.
\end{equation}
where
\begin{align}
&f_\pm\left(z_+,z_{-}\right)=\frac{1}{2\pi}\int_0^{2\pi} f\left(z_\pm+\fe^{i\theta}\overline{z_\mp}\right)
d\theta,\label{f_pm def}\\
&f_r\left(z_+,z_-,r;s\right)=f\left(\fe^{is/\eps^2}z_++\fe^{-is/\eps^2}
\overline{z_-}+r\right)-f_+\left(z_+,z_-\right)\fe^{is/\eps^2}-\overline{f_-}\left(z_+,z_-\right)\fe^{-is/\eps^2}.\label{fr def}
\end{align}
In order to find proper initial conditions for the above system (\ref{LSADz2}),
setting $s=0$ in (\ref{ansatz}) and (\ref{ansatad}),
noticing (\ref{Initial}), we obtain
\begin{equation}\label{init123}
\left\{
  \begin{split}
 & z_+^n(0)+\overline{z_-^n}(0)+r^n(0)=y(t_n)=\phi_1^n,\\
 & \frac{i}{\eps^2}\left[z_+^n(0)-\overline{z_-^n}(0)\right]
  +\dot{z}_+^n(0)+\overline{\dot{z}_-^n}(0)
  +\dot{r}^n(0)=\dot{y}(t_n)=\frac{\phi_2^n}{\eps^2}.
  \end{split}
  \right.
\end{equation}
Now we decompose the above initial data so as to: (i) equate $O\left(\frac{1}{\eps^2}\right)$ and $O(1)$ terms
in the second equation of (\ref{init123}), respectively, and (ii) be well-prepared for the first two equations  in (\ref{LSADz2})
when $0<\eps\ll 1$, i.e.
$\dot{z}_+^n(0)$ and $\dot{z}_-^n(0)$ are determined
from the first two equations in (\ref{LSADz2}), respectively, by setting $\eps=0$ and $s=0$ \cite{Cai1,Cai2}:
\begin{equation}\label{FSW-i1}
\left\{
  \begin{split}
&z_+^n(0)+\overline{z_-^n}(0)=\phi_1^n,\qquad i\left[z_+^n(0)-\overline{z_-^n}(0)\right]=\phi_2^n,\\
&2i\dot{z}_\pm^n(0)+\alpha z_\pm^n(0)+f_\pm\left(z_+^n(0),z_-^n(0)\right)=0,\\
&r^n(0)=0, \qquad \dot{r}^n(0)+\dot{z}_+^n(0)+\overline{\dot{z}_-^n}(0)=0.
\end{split}
  \right.
\end{equation}
Solving (\ref{FSW-i1}), we get the initial data for (\ref{LSADz2}) as
\begin{equation}\label{FSW-i21}
\left\{
  \begin{split}
&z_+^n(0)=\frac{1}{2}\left(\phi_1^n-i\phi_2^n\right), \qquad  z_-^n(0)=\frac{1}{2}\left(\overline{\phi_1^n}-i\ \overline{\phi_2^n}\right),\\
&\dot{z}_\pm^n(0)=\frac{i}{2}\left[\alpha z_\pm^n(0)+f_\pm\left(z_+^n(0),z_-^n(0)\right)\right],\\
&r^n(0)=0, \qquad \dot{r}^n(0)=-\dot{z}_+^n(0)-\overline{\dot{z}_-^n}(0).
\end{split}
  \right.
\end{equation}
The above decomposition can be called as multiscale decomposition by frequency (MDF).
In fact, it can also be regarded as to decompose slow waves at $\eps^2$-wavelength and fast waves at other wavelengths,
thus it can also be called as fast-slow frequency (FSF) decomposition.

Specifically, for pure power nonlinearity, i.e. $f$ satisfies (\ref{power}), then the above MDF (\ref{LSADz2}) collapses to
\begin{equation}\left\{
\begin{split}
&2i\dot{z}_\pm^n(s)+\eps^2\ddot{z}_\pm^n(s)+\alpha z_\pm^n(s)+g_\pm\left(|z_+^n(s)|^2,|z_-^n(s)|^2\right)z_\pm^n(s)=0,\\
&\eps^2\ddot{r}^n(s)+\left(\alpha+\frac{1}{\eps^2}\right)r^n(s)+g_r\left(z_+^n(s),z_-^n(s),r^n(s);s\right)=0,
\quad 0<s\leq\tau,
\end{split}\right. \label{pLSADz1}
\end{equation}
where
\begin{align}
&g_\pm\left(\rho_+,\rho_{-}\right)=\sum_{\left\langle p_1,p_2,p_3 \right\rangle_0}
    \lambda\left(\rho_{+}+\rho_-\right)^{p_1}(\rho_+\rho_-)^{p_2}(\rho_\mp)^{p_3},\label{g_pm def}\\\
&g_r\left(z_+,z_-,r;s\right)=\sum_{k=1}^p\left(g_k\left(z_+,z_-\right)\fe^{i(2k+1)s/\eps^2}+
\overline{g_k}\left(z_-,z_+\right)\fe^{-i(2k+1)s/\eps^2}\right)+h\left(z_+,z_-,r;s\right),\label{gr def}
\end{align}
with
\begin{eqnarray}
&&g_k\left(z_+,z_-\right)=\lambda(z_+)^{k+1}(z_-)^k\sum_{\left\langle p_1, p_2, p_3 \right\rangle_k}\left(|z_+|^2+|z_-|^2\right)^{p_1}|z_+|^{2p_2}|z_-|^{2p_2+2p_3},\label{g_k def}\\
&&h\left(z_+,z_-,r;s\right)=g\left(|\fe^{is/\eps^2}z_++\fe^{-is/\eps^2}\overline{z_-}+
r|^2\right)\left(\fe^{is/\eps^2}z_++\fe^{-is/\eps^2}
\overline{z_-}+r\right)\nonumber\\
&&\qquad \qquad \qquad \ \ -g\left(|\fe^{is/\eps^2}z_++\fe^{-is/\eps^2}\overline{z_-}|^2\right)\left(\fe^{is/\eps^2}z_++\fe^{-is/\eps^2}
\overline{z_-}\right),\label{h def}
\end{eqnarray}and $\left\langle p_1, p_2, p_3 \right\rangle_k=\left\{p_1, p_2,p_3\in\bN_0 \ \left| \ p_1+2p_2+p_3=p-k,\ p_3=0,1\right.\right\}$ for  $k=0,\ldots,p$.

\subsection{Multiscale decomposition by frequency and amplitude (MDFA)}

Another way to decompose (\ref{mdzpm0}) is to decompose it
into a coupled system for two $\eps^2$-frequency waves at $O(1)$-amplitude with the unknowns $z_\pm^n(s)$
and the rest frequency and amplitude waves with the unknown $r^n(s)$ as
\begin{equation}\left\{
\begin{split}
& 2i\dot{z}_\pm^n(s) +\alpha z_\pm^n(s)+f_\pm\left(z_+^n(s),z_-^n(s)\right)=0,\qquad 0<s\leq\tau,\\
&\eps^2\ddot{r}^n(s)
+\left(\alpha+\frac{1}{\eps^2}\right)r^n(s)+f_r\left(z_+^n(s),z_-^n(s),r^n(s);s\right)+\eps^2u^n(s)=0,
 \end{split}\right.\label{FSWDz31}
\end{equation}
where
\begin{equation}\label{fr_tilde}
u^n(s):=\fe^{is/\eps^2}\ddot{z}_+^n(s)+ \fe^{-is/\eps^2}\overline{\ddot{z}_-^n}(s).
\end{equation}
Similarly, the initial data (\ref{Initial}) can be decomposed as the following for the coupled ODEs
(\ref{FSWDz31})
\begin{equation}\label{FSW-i51}
\left\{
\begin{split}
&z_+^n(0)=\frac{1}{2}\left(\phi_1^n-i\phi_2^n\right), \qquad   z_-^n(0)=\frac{1}{2}
\left(\overline{\phi_1^n}-i\ \overline{\phi_2^n}\right),\\
&r^n(0)=0, \qquad \dot{r}^n(0)=-\dot{z}_+^n(0)-\overline{\dot{z}_-^n}(0),
\end{split}
\right.
\end{equation}
with
\begin{equation*}
\dot{z}_\pm^n(0)=\frac{i}{2}\left[\alpha z_\pm^n(0)+f_\pm\left(z_+^n(0),z_-^n(0)\right)\right].
\end{equation*}
In the following, for simplicity of notations, we denote
\begin{equation}\label{fnpms613}
f^n_\pm(s):=f_\pm(z^n_+(s),z_-^n(s)),\quad f_r^n(s):=f_r\left(z_+^n(s),z_-^n(s),r^n(s);s\right).
\end{equation}
The above decomposition can be called as multiscale decomposition by frequency and amplitude (MDFA).
In fact, it can also be regarded as to decompose large amplitude waves at $O(1)$ and small amplitude waves at $O(\eps^2)$,
thus it can also be called as large-small amplitude (LSA) decomposition.

Similarly, for pure power nonlinearity, i.e. $f$ satisfies (\ref{power}), then the above MDFA (\ref{FSWDz31}) collapses to
\begin{equation}\left\{
\begin{split}
&2i\dot{z}_\pm^n(s)+\alpha z_\pm^n(s)+g_\pm\left(|z_+^n(s)|^2,|z_-^n(s)|^2\right)z_\pm^n(s)=0,\quad 0<s\leq\tau,\\
&\eps^2\ddot{r}^n(s)+\left(\alpha+\frac{1}{\eps^2}\right)r^n(s)+
g_r\left(z_+^n(s),z_-^n(s),r^n(s);s\right)+\eps^2u^n(s)=0.
\end{split}\right.\label{pLSADz31}
\end{equation}

After solving the MDF (\ref{LSADz2}) or (\ref{pLSADz1}) with the initial data (\ref{FSW-i21}), or the MDFA (\ref{FSWDz31}) or (\ref{pLSADz31}) with the initial data (\ref{FSW-i51}), we get $z_\pm^n(\tau)$, $\dot{z}_\pm^n(\tau)$, $r^n(\tau)$ and $\dot{r}^n(\tau)$. Then we can reconstruct the solution to (\ref{WODE}) at $t=t_{n+1}$ by setting $s=\tau$ in (\ref{ansatz}) and (\ref{ansatad}), i.e.,
\begin{equation}\label{y n+1}
\left\{
\begin{split}
&y(t_{n+1})=\fe^{i\tau/\eps^2}z_+^n(\tau)+\fe^{-i\tau/\eps^2}\overline{z_-^n}(\tau)
+r^n(\tau):=\phi_1^{n+1},\\
&\dot{y}(t_{n+1})=\frac{1}{\eps^2}\phi_2^{n+1},\\
\end{split}\right.
\end{equation}
with
\[\phi_2^{n+1}:=\fe^{i\tau/\eps^2}\left[\eps^2\dot{z}_+^n(\tau)+i z_+^n(\tau)\right]+\fe^{-i\tau/\eps^2}
\left[\eps^2\overline{\dot{z}_-^n}(\tau)-i\overline{z_-^n}(\tau)\right]+\eps^2\dot{r}^n(\tau).\]

\section{Multiscale time integrators (MTIs) for pure power nonlinearity}
\label{sec:3}

Based on the decomposed system in the pure power nonlinearity case, i.e. the MDFA (\ref{pLSADz31}) or MDF (\ref{pLSADz1}), we propose two multiscale
time integrators (MTI) for solving (\ref{WODE}), respectively.
At each time grid $t=t_n$, we solve the decomposed system (\ref{pLSADz31}) or (\ref{pLSADz1}) by proper integrators within the time interval $[0,\tau]$, and then use (\ref{y n+1}) to reconstruct the solution to (\ref{WODE}) at $t=t_{n+1}$.

\subsection{A MTI based on MDFA}
\label{subsec:LA}
Based on the MDFA (\ref{pLSADz31}), a MTI is designed as follows.

\medskip

\noindent{\sl An exact integrator for $z_\pm^n(s)$ in (\ref{pLSADz31}):}

\medskip

Noting from (\ref{g_pm def}) that $g_\pm\left(\rho_+, \rho_-\right)$
is real-valued, similar to \cite{Bao1,Bao2}, multiplying
the first two equations in (\ref{pLSADz31}) by $\overline{z_\pm^n}(s)$, respectively,
then subtracting from their complex conjugates, we have
\begin{equation}\label{abs_z_c}
\left|z_\pm^n(s)\right|\equiv\left|z_\pm^n(0)\right|,\qquad  0\leq s\leq\tau.
\end{equation}
Therefore, the equations for $z^n_\pm(s)$ in (\ref{pLSADz31}) are exactly integrable, i.e.,
\begin{equation}\label{z_pm exact}
 z_\pm^n(s)=\fe^{{is}\left[g_\pm\left(|z_+^n(0)|^2,|z_-^n(0)|^2\right)
 +\alpha\right]/2}z_\pm^n(0),\qquad 0\leq s\leq\tau.
\end{equation}
Taking $s=\tau$ in (\ref{z_pm exact}), we get
\begin{equation}\label{zpmnt876}
 z_\pm^n(\tau)=\fe^{{i\tau}\left[g_\pm
 \left(|z_+^n(0)|^2,|z_-^n(0)|^2\right)+\alpha\right]/2}z_\pm^n(0).
\end{equation}
Differentiating (\ref{z_pm exact}) with respect to $s$ and then taking $s=0$ or $\tau$, we get
\begin{equation}\label{dz_pm exact}
\left\{
\begin{split}
& \dot{z}_\pm^n(\tau)=\frac{i}{2}\left[g_\pm\left(|z_+^n(0)|^2,|z_-^n(0)|^2\right)+\alpha\right]z_\pm^n(\tau),\\
& \ddot{z}_\pm^n(0)=
 -\frac{1}{4}\left[g_\pm\left(|z_+^n(0)|^2,|z_-^n(0)|^2\right)+\alpha\right]^2z_\pm^n(0),\\
& \ddot{z}_\pm^n(\tau)=
 -\frac{1}{4}\left[g_\pm\left(|z_+^n(0)|^2,|z_-^n(0)|^2\right)+\alpha\right]^2z_\pm^n(\tau).
\end{split}
\right.
\end{equation}

\noindent{\sl An EWI for $r^n(s)$ in (\ref{pLSADz31})}:

\medskip

For the third equation in (\ref{pLSADz31}), we apply the exponential wave integrator (EWI) \cite{Cai2,Gaustchi,Grimm,Dong,Lubich2,Deuflhard,Lubich1,Lubich2,Sanz,Garcia} to solve it, which has favorable properties for solving the second-order oscillatory problems. By applying the variation-of-constant formula to $r^n(s)$, we get
\begin{equation}\label{r gr}
r^n(s)=\frac{\sin(\omega s)}{\omega}\dot{r}^n(0)-\int_0^s\frac{\sin\left(\omega(s-\theta)\right)}{\eps^2\omega}\left[g_r^n(\theta)+\eps^2u^n(\theta)\right]d\theta,
\end{equation}
where
\begin{equation}\label{kappa d}
\omega=\frac{\sqrt{1+\eps^2\alpha}}{\eps^2}=O\left(\frac{1}{\eps^2}\right),\quad g_r^n(\theta):=g_r(z_+^n(\theta),z_-^n(\theta),r^n(\theta);\theta).
\end{equation}
Taking $s=\tau$ in (\ref{r gr}), we get
\begin{equation}\label{r tau gr}
r^n(\tau)=\frac{\sin(\omega \tau)}{\omega}\dot{r}^n(0)-\int_0^\tau\frac{\sin\left(\omega(\tau-\theta)
\right)}{\eps^2\omega}\left[g_r^n(\theta)+\eps^2u^n(\theta)\right]d\theta.
\end{equation}
Differentiating (\ref{r gr}) with respect to $s$ and then taking $s=\tau$, we get
\begin{equation}\label{dr tau gr}
\dot{r}^n(\tau)=\cos(\omega \tau)\dot{r}^n(0)-\int_0^\tau\frac{\cos\left(\omega(\tau-\theta)
\right)}{\eps^2}\left[g_r^n(\theta)+\eps^2u^n(\theta)\right]d\theta.
\end{equation}

Plugging (\ref{gr def}) into (\ref{r tau gr}) and (\ref{dr tau gr}), we find
\begin{equation}\label{VCFr}
\left\{
\begin{split}
&r^n(\tau)=\frac{\sin(\omega \tau)}{\omega}\dot{r}^n(0)-
\sum_{k=1}^p\left[I^n_{k,+}+\overline{I^n_{k,-}}\right]-J^n,\\
&\dot{r}^n(\tau)=\cos(\omega \tau)\dot{r}^n(0)-\sum_{k=1}^p\left[\dot{I}_{k,+}^n+
\overline{\dot{I}^n_{k,-}}\right]-\dot{J}^n,
\end{split}
\right.
\end{equation}
where
\begin{equation}\label{Ikpmn965}
  \left\{
  \begin{split}
&I_{k,\pm}^n= \int_0^\tau\frac{\sin(\omega(\tau-
\theta))}{\eps^2\omega}\fe^{i(2k+1)\theta/\eps^2}g_{k,\pm}^n(\theta) d\theta,\\
&J^n= \int_0^\tau\frac{\sin(\omega(\tau-\theta))}{\eps^2\omega}
\left[h^n(\theta)+\eps^2u^n(\theta)\right]d\theta,\\
&\dot{I}_{k,\pm}^n= \int_0^\tau\frac{\cos(\omega(\tau-\theta))}
{\eps^2}\fe^{i(2k+1)\theta/\eps^2}g_{k,\pm}^n(\theta) d\theta,\\
&\dot{J}^n= \int_0^\tau\frac{\cos(\omega(\tau-\theta))}{\eps^2}
\left[h^n(\theta)+\eps^2u^n(\theta)\right]d\theta,
  \end{split}
  \right.
\end{equation}
with
\begin{equation}\label{gkpm321}
g_{k,\pm}^n(\theta):=g_{k}(z_\pm^n(\theta),z_\mp^n(\theta)),\qquad h^n(\theta):=h\left(z_+^n(\theta),z_-^n(\theta),r^n(\theta);\theta\right).
\end{equation}
In order to have an explicit integrator and achieve uniform error bounds, we approximate the integral terms $I_{k,\pm}^n$ and $\dot{I}_{k,\pm}^n$ in (\ref{Ikpmn965}) by a quadrature in the Gautschi's type \cite{Gaustchi} as the following
which was discussed and used in \cite{Cai2,Dong}
\begin{equation}\label{quad6543}
\left\{
\begin{split}
&I_{k,\pm}^n\approx\int_0^\tau\frac{\sin(\omega(\tau-
\theta))}{\eps^2\omega}\fe^{i(2k+1)\theta/\eps^2}\left[g_{k,\pm}^n(0)+\theta \dot{g}_{k,\pm}^n(0)\right] d\theta\\
&\qquad\  =p_kg_{k,\pm}^n(0)+q_k\dot{g}_{k,\pm}^n(0),\\
&\dot{I}_{k,\pm}^n\approx\int_0^\tau\frac{\cos(\omega(\tau-\theta))}
{\eps^2}\fe^{i(2k+1)\theta/\eps^2} \left[g_{k,\pm}^n(0)+\theta \dot{g}_{k,\pm}^n(0)\right]d\theta\\
&\qquad\ =\dot{p}_kg_{k,\pm}^n(0)+\dot{q}_k\dot{g}_{k,\pm}^n(0),\\
\end{split}
\right.\end{equation}
where (their detailed explicit formulas are shown in Appendix A)
\begin{align*}\label{pkqk}
&p_k=\int_0^\tau\frac{\sin(\omega(\tau-\theta))}{\eps^2\omega}\fe^{i(2k+1)\theta/\eps^2}d\theta,\quad q_k=\int_0^\tau \frac{\sin(\omega(\tau-\theta))}{\eps^2\omega}\fe^{i(2k+1)\theta/\eps^2}\theta d\theta,\\
&\dot{p}_k=\int_0^\tau\frac{\cos(\omega(\tau-\theta))}{\eps^2}\fe^{i(2k+1)\theta/\eps^2}d\theta,\quad \dot{q}_k=\int_0^\tau \frac{\cos(\omega(\tau-\theta))}{\eps^2}\fe^{i(2k+1)\theta/\eps^2}\theta d\theta.\ \
\end{align*}
In addition, approximating $J^n$ and $\dot{J}^n$ in (\ref{Ikpmn965}) by the standard single step trapezoidal rule and noticing
$h^n(0)=0$, we get
\begin{equation}\label{quad64321}
\left\{
\begin{split}
&J^n\approx \frac{\tau}{2} \frac{\sin(\omega\tau)}{\varepsilon^2\omega}\left[h^n(0)+\varepsilon^2 u^n(0)\right]
=\frac{\tau}{2} \frac{\sin(\omega\tau)}{\omega}u^n(0),\\
&\dot{J}^n\approx \frac{\tau}{2}\left[\frac{\cos(\omega\tau)}{\varepsilon^2}\left(h^n(0)+\varepsilon^2 u^n(0)\right)
+\frac{1}{\varepsilon^2}\left(h^n(\tau)+\varepsilon^2 u^n(\tau)\right)\right].
\end{split}
\right.\end{equation}
Plugging (\ref{quad6543}), (\ref{quad64321}) and (\ref{Ikpmn965}) into (\ref{VCFr}) and noticing
$h^n(0)=0$, we obtain
\begin{equation}\label{quad Ik J}
\left\{
\begin{split}
&r^n(\tau)\approx-\sum_{k=1}^p\left[p_k g_{k,+}^n(0)+q_k \dot{g}_{k,+}^n(0)+\overline{p_k g_{k,-}^n(0)}+
\overline{q_k \dot{g}_{k,-}^n(0)}\right]\\
&\ \qquad\quad+\frac{\sin(\omega\tau)}{\omega}\left[\dot{r}^n(0)-\frac{\tau}{2}u^n(0)\right],\\
&\dot{r}^{n}(\tau)\approx-\sum_{k=1}^p\left[\dot{p}_k g_{k,+}^n(0)+\dot{q}_k \dot{g}_{k,+}^n(0)+
\overline{\dot{p}_k g_{k,-}^n(0)}+\overline{\dot{q}_k \dot{g}_{k,-}^n(0)}\right] \\
&\ \qquad\quad+\cos(\omega\tau)\left[\dot{r}^n(0)-\frac{\tau}{2}u^n(0)\right]-\frac{\tau}{2}
\left[\frac{h^n(\tau)}{\eps^2}+u^n(\tau)\right].
\end{split}
\right.
\end{equation}

\medskip

\noindent{\sl Detailed numerical scheme}

\medskip

 For $n=0,1,\ldots,$ let $y^n$ and $\dot{y}^n$ be the approximations of $y(t_n)$ and $\dot{y}(t_n)$, $z_{\pm}^{n+1}$, $\dot{z}_{\pm}^{n+1}$, $\ddot{z}_{\pm}^{n+1}$, $r^{n+1}$ and $\dot{r}^{n+1}$ be the approximations of $z_{\pm}^n(\tau)$, $\dot{z}_{\pm}^n(\tau)$, $\ddot{z}_{\pm}^n(\tau)$, $r^n(\tau)$ and $\dot{r}^n(\tau)$, respectively, where $z_{\pm}^n(s)$ and $r^n(s)$ are the solutions to the system (\ref{pLSADz31}) with initial data  (\ref{FSW-i51}). Choosing $y^0=y(0)=\phi_1$ and $\dot{y}^0=\dot{y}(0)=\eps^{-2}\phi_2$, for $n=0,1,\ldots$, $y^{n+1}$ and $\dot{y}^{n+1}$ are updated as follows:
\begin{equation}\label{IFSW}
\left\{
\begin{split}
&y^{n+1} = \fe^{i\tau/\eps^2}z_+^{n+1}+\fe^{-i\tau/\eps^2}\overline{z_-^{n+1}}+r^{n+1},\\
&\dot{y}^{n+1}  =\fe^{i\tau/\eps^2}\left(\dot{z}_+^{n+1}+\frac{i}{\eps^2}z_+^{n+1}\right)+ \fe^{-i\tau/\eps^2}\left(\overline{\dot{z}_-^{n+1}}-\frac{i}{\eps^2}\overline{z_-^{n+1}}\right)+\dot{r}^{n+1},
\end{split}
\right.
\end{equation}
where
\begin{numcases}
\ z_\pm^{n+1}=\fe^{i\mu_\pm\tau}z_\pm^{(0)},\qquad\dot{z}_\pm^{n+1}=i\mu_\pm z_\pm^{n+1},\qquad \ddot{z}_\pm^{n+1}=-(\mu_\pm)^2z_\pm^{n+1},\nonumber\\
r^{n+1}=\frac{\sin(\omega\tau)}{\omega}\left(\dot{r}^{(0)}-\frac{\tau}{2}u^{(0)}\right)
-\sum_{k=1}^p\left[p_kg_{k,+}^{(0)}+q_k\dot{g}_{k,+}^{(0)}+\overline{p_kg_{k,-}^{(0)}}+
\overline{q_k\dot{g}_{k,-}^{(0)}}\right],\nonumber\\
\dot{r}^{n+1}=-\sum_{k=1}^p\left[\dot{p}_kg_{k,+}^{(0)}+\dot{q}_k\dot{g}_{k,+}^{(0)}+
\overline{\dot{p}_kg_{k,-}^{(0)}}+\overline{\dot{q}_k\dot{g}_{k,-}^{(0)}}\right] \nonumber\\
\ \qquad\quad+\cos(\omega\tau)\left(\dot{r}^{(0)}-\frac{\tau}{2}u^{(0)}\right)-
\frac{\tau}{2}\left(\frac{h^{n+1}}{\eps^2}+u^{n+1}\right),\label{FS_r}\\
u^{n+1}=\fe^{i\tau/\eps^2}\ddot{z}_+^{n+1}+\fe^{-i\tau/\eps^2}\overline{\ddot{z}_-^{n+1}},\nonumber\\
h^{n+1}=g(|y^{n+1}|^2)y^{n+1}-g\left(\left|y^{n+1}-
r^{n+1}\right|^2\right)\left(y^{n+1}-r^{n+1}\right),\nonumber
\end{numcases}
with
\begin{equation}\label{F_ini1}
\left\{
\begin{split}
&z_+^{(0)}=\frac{y^n-i\eps^2\dot{y}^{n}}{2},\quad
 z_-^{(0)}=\frac{\overline{y^n}-i\eps^2\overline{\dot{y}^{n}}}{2},\quad\dot{z}_\pm^{(0)}=i\mu_\pm z_\pm^{(0)},\\
&\dot{r}^{(0)}=-\dot{z}_+^{(0)}-\overline{\dot{z}_-^{(0)}},\qquad u^{(0)}=-(\mu_+)^2z_+^{(0)}-(\mu_-)^2\overline{z_-^{(0)}},\\
&\mu_\pm=\frac{1}{2}g_{\pm}\left(\left|z_+^{(0)}\right|^2,\left|z_-^{(0)}\right|^2\right)+\frac{\alpha}{2},\qquad
  g_{k,\pm}^{(0)}=g_k\left(z_\pm^{(0)},z_\mp^{(0)}\right),\\
&\dot{g}_{k,\pm}^{(0)}=\frac{d}{ds}\left[g_{k}\left(z_+(s),z_-(s)\right)\right]\Big|_{\left\{z_\pm=z_\pm^{(0)},\ \dot{z}_\pm=\dot{z}_\pm^{(0)}\right\}},\qquad k=1,\ldots,p.
\end{split}
\right.
\end{equation}

We call the proposed numerical integrator (\ref{IFSW}) with (\ref{FS_r}) as a multiscale time integrator  based on MDFA which is abbreviated as MTI-FA in short. Clearly, MTI-FA is fully explicit, and easy to implement in practice. In fact, in this scheme, at the beginning of each time interval $[t_n,t_{n+1}]$, we decompose the numerical solutions $y^n$ and $\dot{y}^n$ to specify the initial conditions of the system (\ref{FSWDz31});
  then we solve the decomposed system numerically; at the end of each time interval, we reconstruct the approximations $y^{n+1}$ and $\dot{y}^{n+1}$ from the numerical solutions to (\ref{FSWDz31}). Therefore, at each time step, the algorithm proceeds as decomposition-solution-reconstruction.

\subsection{Another MTI based on MDF}
Based on the MDF (\ref{pLSADz1}), we propose another MTI as follows. Since the system (\ref{pLSADz1}) consists of three second-order oscillatory problems, so we use EWIs to solve it.
\medskip

\noindent{\sl An EWI for (\ref{pLSADz1}):}

\medskip
By applying the variation-of-constant formula to the first two equations in (\ref{LSADz2}), we have
\begin{equation}\label{2VCFz}
z_\pm^n(s)=a(s)z_\pm^n(0)+\eps^2b(s)\dot{z}_\pm^n(0)
-\int_0^{s}b(s-\theta)f_\pm^n(\theta) d\theta,\qquad 0\le s\le \tau,
\end{equation}
where
\begin{numcases}
\ a(s):=\frac{\lambda_+\fe^{is\lambda_-}-\lambda_-\fe^{is\lambda_+}}{\lambda_+-\lambda_-},\quad b(s):=i\frac{\fe^{is\lambda_+}-
\fe^{is\lambda_-}}{\eps^2(\lambda_--\lambda_+)}, \qquad 0\le s\le \tau,\nonumber\\
 \lambda_{+}=-\frac{1}{\eps^2}\left(1+\sqrt{1+\alpha\eps^2}\right)=O\left(\frac{1}{\eps^2}\right),\label{asbslbd}\\
 \lambda_{-}=-\frac{1}{\eps^2}\left(1-\sqrt{1+\alpha\eps^2}\right)=O(1). \nonumber
\end{numcases}
Taking $s=\tau$ in (\ref{2VCFz}), we get
\begin{equation}\label{2VCFz tau}
z_\pm^n(\tau)=a(\tau)z_\pm^n(0)+\eps^2b(\tau)\dot{z}_\pm^n(0)-\int_0^{\tau}b(\tau-\theta)f_\pm^n(\theta) d\theta.
\end{equation}
Differentiating (\ref{2VCFz}) with respect to $s$ and then taking $s=\tau$, we get
\begin{equation}\label{2VCFdz}
\dot{z}_\pm^n(\tau)=\dot{a}(\tau)z_\pm^n(0)+\eps^2\dot{b}(\tau)\dot{z}_\pm^n(0)-\int_0^{\tau}\dot{b}(\tau-\theta)f_\pm^n(\theta)d\theta,
\end{equation}
where
\begin{equation*}
\dot{a}(s)=i\lambda_+\lambda_-\frac{\fe^{is\lambda_-}-\fe^{is\lambda_+}}{\lambda_+-\lambda_-},\qquad \dot{b}(s)=\frac{\lambda_+\fe^{is\lambda_+}-\lambda_-\fe^{is\lambda_-}}{\eps^2(\lambda_+-\lambda_-)},
\qquad 0\le s\le \tau.
\end{equation*}
Then approximating the integral terms in (\ref{2VCFz tau}) and (\ref{2VCFdz}) by the Gautschi's type quadrature similar as (\ref{quad6543}), we have
\begin{equation}\label{gen F appr}
\left\{
\begin{split}
&z_\pm^n(\tau)\approx a(\tau)z_\pm^n(0)+\eps^2b(\tau)\dot{z}_\pm^n(0)-c(\tau)f_\pm^n(0)-d(\tau)\dot{f}_\pm^n(0),\\
&\dot{z}_\pm^n(\tau)\approx\dot{a}(\tau)z_\pm^n(0)+\eps^2\dot{b}(\tau)\dot{z}_\pm^n(0)
-\dot{c}(\tau)f_\pm^n(0)-\dot{d}(\tau)\dot{f}_\pm^n(0),
\end{split}\right.
\end{equation}
where (their detailed explicit formulas are shown in Appendix A)
\begin{align*}
&c(\tau):=\int^\tau_0 b(\tau-\theta)d\theta,\quad d(\tau):=\int^\tau_0  b(\tau-\theta){}\theta d\theta,\\
&\dot{c}(\tau):=\int^\tau_0 \dot{b}(\tau-\theta)d\theta,\quad \dot{d}(\tau):=\int^\tau_0\dot{b}(\tau-\theta){}\theta d\theta.
\end{align*}
Now, substituting
$$f_\pm^n(s)=g_\pm(|z^n_+(s)|^2,|z^n_-(s)|^2)z_\pm^n(s)$$
into (\ref{gen F appr}), we obtain the approximations to $z_\pm^n(\tau)$ and $\dot{z}_\pm^n(\tau)$.

As for the last equation in (\ref{pLSADz1}), again by the variation-of-constant formula and noticing (\ref{gr def}), we can derive the integral forms for $r^n(\tau)$ and $\dot{r}^n(\tau)$ same as (\ref{VCFr}) but without $u^n$ terms defined in $J^n$ and $\dot{J}^n$. Then the rest approximations are similar to (\ref{quad Ik J}).

\medskip

\noindent{\sl Detailed numerical scheme}

\medskip

Following the same notations introduced in subsection \ref{subsec:LA}, choosing $y^0=y(0)=\phi_1$ and $\dot{y}^0=\dot{y}(0)=\eps^{-2}\phi_2$, for $n=0,1,\ldots,$ $y^{n+1}$ and $\dot{y}^{n+1}$ are updated in the same way as (\ref{IFSW})-(\ref{F_ini1}) except that
\begin{numcases}
\ z_\pm^{n+1}=a(\tau)z_\pm^{(0)}+\eps^2b(\tau)\dot{z}_\pm^{(0)}-
  c(\tau)f_\pm\left(z_+^{(0)},z_-^{(0)}\right)-d(\tau)\dot{f}_\pm^{(0)},\nonumber\\
  \dot{z}_\pm^{n+1}=\dot{a}(\tau)z_\pm^{(0)}+\eps^2\dot{b}(\tau)\dot{z}_\pm^{(0)}
  -\dot{c}(\tau)f_\pm\left(z_+^{(0)},z_-^{(0)}\right)-\dot{d}(\tau)\dot{f}_\pm^{(0)},\nonumber\\
\ r^{n+1}=\frac{\sin(\omega\tau)}{\omega}\dot{r}^{(0)}
-\sum_{k=1}^p\left[p_kg_{k,+}^{(0)}+q_k\dot{g}_{k,+}^{(0)}+\overline{p_kg_{k,-}^{(0)}}+
\overline{q_k\dot{g}_{k,-}^{(0)}}\right],\label{F s}\\
 \dot{r}^{n+1}= \cos(\omega\tau)\dot{r}^{(0)}-\frac{\tau}{2\eps^2}h^{n+1}
-\sum_{k=1}^p\left[\dot{p}_kg_{k,+}^{(0)}+\dot{q}_k\dot{g}_{k,+}^{(0)}+
\overline{\dot{p}_kg_{k,-}^{(0)}}+\overline{\dot{q}_k\dot{g}_{k,-}^{(0)}}\right],\nonumber\\
\dot{f}_{\pm}^{(0)}=\frac{d}{ds}\left[f_\pm(z_+(s),z_-(s))\right]\Big|_{\left\{z_\pm=z_\pm^{(0)},\ \dot{z}_\pm=\dot{z}_\pm^{(0)}\right\}}.\nonumber
\end{numcases}

Again, we call the proposed numerical integrator (\ref{IFSW}) with (\ref{F s})
 as a multiscale time integrator  based on MDF which is abbreviated as MTI-F in short.
 Clearly, MTI-F is fully explicit, and easy to implement in practice.

\subsection{Error estimates of MTIs for pure power nonlinearity}\label{MDFA_est}
Here, we shall give the convergence result of the proposed MTIs for the pure power nonlinearity case. In order to obtain rigorous error estimates, we assume that the exact solution $y(t)$ to (\ref{WODE}) satisfies the following assumptions
\begin{equation}\label{assumption}
y(t)\in C^2(0,T),\quad\mbox{and}\quad \left\|\frac{d^m}{dt^m}y(t)\right\|_{L^\infty(0,T)}\lesssim\frac{1}{\eps^{2m}},\quad m=0,1,2,
\end{equation}
for $0<T<T^\ast$ with $T^\ast$ the maximum existence time. Denoting
\begin{equation}\label{C0}
C_0:=\max\left\{\|y\|_{L^{\infty}(0,T)},\ \eps^2\|\dot{y}\|_{{L^{\infty}(0,T)}},\ \eps^4\|\ddot{y}\|_{{L^{\infty}(0,T)}}\right\},
\end{equation}
and the error functions as
\begin{equation}\label{error_fun}
  e^n:=y(t_n)-y^n,\quad \dot{e}^n:=\dot{y}(t_n)-\dot{y}^n,
\end{equation}
then we have the following error estimates for MTI-FA
 (see detailed proof in Appendix \ref{Ap proof MTI_FA}) and MTI-F (see detailed proof in Appendix \ref{Ap proof MTI_F}).

\begin{theorem}[Error bounds of MTI-FA]\label{tm1}
For numerical integrator MTI-FA, i.e. (\ref{IFSW}) with
(\ref{FS_r}), under the assumption (\ref{assumption}),
there exits a constant $\tau_0>0$ independent of $\eps$
and $n$, such that for any $0<\eps\leq 1$
\begin{align}
  &|e^n|+\eps^2|\dot{e}^n|\lesssim\frac{\tau^2}{\eps^2},\qquad
  |e^n|+\eps^2|\dot{e}^n|\lesssim \eps^2,\qquad 0<\tau\leq\tau_0,\label{FSWDerror1}\\
  &|y^n|\leq C_0+1,\qquad |\dot{y}^n|\leq\frac{C_0+1}{\eps^2},
  \qquad 0\leq n \leq \frac{T}{\tau}.\label{FSWDbound}
\end{align}
Thus by taking the minimum of two error bounds for $0<\eps\le 1$, we have a uniform error bound as
\begin{equation}\label{FSWDerror2}
|e^n|+\eps^2|\dot{e}^n|\lesssim \min_{0<\eps\le 1}\left\{\frac{\tau^2}{\eps^2},\eps^2\right\}\lesssim
\tau,\quad 0\leq n \leq \frac{T}{\tau},\quad 0<\tau\leq\tau_0.
\end{equation}
\end{theorem}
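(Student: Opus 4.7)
The plan is to prove (\ref{FSWDerror1}) and (\ref{FSWDbound}) simultaneously by induction on $n$, since the a priori numerical bound (\ref{FSWDbound}) supplies the uniform Lipschitz control on $g_\pm,g_k,h$ needed to estimate one-step truncation errors, while in return the error bounds (\ref{FSWDerror1}) guarantee that the numerical solution stays inside the ball on which this Lipschitz control holds. The two estimates $\tau^2/\eps^2$ and $\eps^2$ must be established by separate arguments with very different scalings; the uniform bound (\ref{FSWDerror2}) then follows by taking the pointwise minimum, since $\min\{\tau^2/\eps^2,\eps^2\}\leq\tau$ with equality near $\eps^2=\tau$.

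\textbf{Preliminary regularity of the exact decomposition.} Before the induction I would verify that on every interval $[t_n,t_{n+1}]$ the continuous components $z_\pm^n(s),r^n(s)$ defined by the ansatz (\ref{ansatz}) and the MDFA system (\ref{pLSADz31}) satisfy the multiscale bounds
\[
|z_\pm^n(s)|+|\dot z_\pm^n(s)|+|\ddot z_\pm^n(s)|\lesssim 1,\qquad r^n(0)=0,\quad |r^n(s)|\lesssim \eps^2,\quad |\dot r^n(s)|\lesssim 1,\quad |\ddot r^n(s)|\lesssim \eps^{-2},
\]
using the explicit formulas (\ref{abs_z_c})--(\ref{dz_pm exact}), the Duhamel identity (\ref{r gr}), and the cancellation $h^n(0)=0$ with $h^n(s)=O(\eps^2)$ read off from (\ref{h def}). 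These estimates are the analytical backbone that allows two different local error scalings to coexist.

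\textbf{Local truncation error.} Starting a single step from the exact data at $t_n$, the scheme reproduces $z_\pm^n(\tau)$ with zero error because the MDFA $z_\pm$-equation is integrated in closed form via (\ref{z_pm exact})--(\ref{dz_pm exact}); all local error is therefore in the $r$-component. Comparing the Gautschi quadratures (\ref{quad6543}) with the exact integrals (\ref{Ikpmn965}) yields remainders bounded by $\tau^3\|\ddot g_{k,\pm}^n\|_{L^\infty}\lesssim \tau^3$, uniformly in $\eps$, since $g_{k,\pm}^n(\theta)$ is a polynomial in the smooth functions $z_\pm^n(\theta)$. The trapezoidal errors for $J^n,\dot J^n$ are controlled by $\tau^3(\|\ddot h^n\|+\eps^2\|\ddot u^n\|)\lesssim \tau^3$, thanks to $h^n(s)=O(\eps^2)$. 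Tracing through the reconstruction (\ref{IFSW}) produces the one-step estimate $|\tilde{e}^{n+1}|+\eps^2|\tilde{\dot{e}}^{n+1}|\lesssim \tau^3/\eps^2$, where the extra $1/\eps^2$ arises from the $\dot J^n$ integrand and the weighting of $\dot r^{n+1}$. Simultaneously, because the exact $r^n(\tau)$ and its numerical counterpart are individually of size $O(\eps^2)$, the unconditional estimate $|\tilde{e}^{n+1}|+\eps^2|\tilde{\dot{e}}^{n+1}|\lesssim \eps^2$ holds independently of $\tau$.

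\textbf{Stability, propagation, and the main obstacle.} To go from one-step to global I would establish stability of the scheme in the weighted norm $\|(e,\dot e)\|_\ast:=|e|+\eps^2|\dot e|$ with a Lipschitz constant independent of $\eps$: a perturbation of $(y^n,\dot y^n)$ of size $\delta$ in this norm perturbs $z_\pm^{(0)},\dot r^{(0)},u^{(0)}$ in (\ref{F_ini1}) by $O(\delta)$, and the unimodular factors $\fe^{\pm i\tau/\eps^2}$ in (\ref{IFSW}) do not amplify them, yielding $\|(e^{n+1},\dot e^{n+1})\|_\ast\leq(1+C\tau)\|(e^n,\dot e^n)\|_\ast$ plus the local truncation. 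Discrete Gronwall over $n\leq T/\tau$ steps then upgrades both local bounds into the global bounds (\ref{FSWDerror1}), and by choosing $\tau_0$ small enough the accumulated error remains below one, closing the induction for (\ref{FSWDbound}). The main obstacle is precisely this stability step: the factor $1/\eps^2$ attached to $z_\pm^{n+1}$ inside $\dot y^{n+1}$ threatens to blow up the Lipschitz constant, and only by measuring the error in the weighted norm $\|\cdot\|_\ast$ and by exploiting that $z_\pm^{(0)}$ depends on the well-balanced combination $y^n\mp i\eps^2\dot y^n$ does one recover an $\eps$-uniform Gronwall constant. Once stability is in hand, the pointwise minimum of the two propagated estimates immediately gives (\ref{FSWDerror2}).
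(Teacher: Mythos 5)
Your overall architecture coincides with the paper's: split the error into a propagation part (stability of the flow in the weighted energy $\mathcal{E}(e,\dot e)=\eps^2|\dot e|^2+(\alpha+\eps^{-2})|e|^2$ of (\ref{error_energy})) and a one-step local error concentrated entirely in the $r$-component (since the $z_\pm$-equations of MDFA are integrated exactly, cf. (\ref{xi})), establish the multiscale regularity $|z_\pm^n|,|\dot z_\pm^n|,|\ddot z_\pm^n|\lesssim 1$, $|r^n|\lesssim\eps^2$, $|\dot r^n|\lesssim 1$, $|\ddot r^n|\lesssim\eps^{-2}$, prove two independent local bounds, and close with discrete Gronwall plus induction on the boundedness (\ref{FSWDbound}). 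Your first branch, local error $O(\tau^3/\eps^2)$ in the norm $|e|+\eps^2|\dot e|$ accumulating to the global $O(\tau^2/\eps^2)$, is essentially the paper's Lemma \ref{lm:local_error_F} and (\ref{C6}).

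The genuine gap is in your second branch. You derive the one-step bound $|\widetilde e^{\,n+1}|+\eps^2|\widetilde{\dot e}^{\,n+1}|\lesssim\eps^2$ merely from $|r^n(\tau)|\lesssim\eps^2$ and $|r^{n+1}|\lesssim\eps^2$ by the triangle inequality; this bound carries \emph{no factor of $\tau$} and therefore cannot be upgraded to the global estimate $|e^n|+\eps^2|\dot e^n|\lesssim\eps^2$ of (\ref{FSWDerror1}). The Gronwall accumulation you invoke gives $\mathcal{E}(e^m,\dot e^m)\lesssim\frac{1}{\tau}\sum_{l\le m}\mathcal{E}(\xi^l,\dot\xi^l)$ as in (\ref{emem478}); feeding in $\mathcal{E}(\xi^l,\dot\xi^l)\lesssim\eps^4$ yields only $|e^m|\lesssim\eps^3/\tau$, which is useless for small $\tau$. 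What is actually needed, and what the paper proves, is the per-step bound $\mathcal{E}(\xi^{n+1},\dot\xi^{n+1})\lesssim\tau^2\eps^2$, i.e.\ $|\xi^{n+1}|\lesssim\tau\eps^2$. This does \emph{not} follow from the sizes of $r^n(\tau)$ and $r^{n+1}$ separately; it requires estimating the quadrature \emph{remainders} $\mathcal{I}^n_{k,\pm}$ and $\mathcal{J}^n$ in (\ref{mJn367}) by a second, non-Taylor route: one writes the oscillatory kernel as $\frac{\sin(\omega(s-\theta))}{\eps^2\omega}\fe^{i(2k+1)\theta/\eps^2}=\frac{d}{d\theta}T_k(\theta)$ with the antiderivative $T_k(\theta)=O(\eps^2)$ of (\ref{Tktht4}) and integrates by parts, which converts the $O(\tau^3)$ Taylor remainder into $O(\tau^2\eps^2)$ for $\mathcal{I}^n_{k,\pm}$ (cf.\ (\ref{Ink})) and gives $|\mathcal{J}^n|\lesssim\tau\eps^2$ (cf.\ (\ref{Pn-2})). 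Without this oscillatory-integral step your argument delivers only the $O(\tau^2/\eps^2)$ half of the theorem, and hence not the uniform bound (\ref{FSWDerror2}). A secondary, minor remark: the paper phrases stability as a Gronwall estimate for the \emph{exact} flow started from the perturbed data $(y^n,\dot y^n)$ (Lemma \ref{lm:stab_AP}) rather than for the numerical map as you propose; your variant is workable but you would still need to verify the $\eps$-uniform Lipschitz constant of the full update (\ref{FS_r})--(\ref{F_ini1}), including the $h^{n+1}/\eps^2$ term, which your sketch does not address.
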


\begin{theorem}[Error bounds of MTI-F]\label{tm2}
For the numerical integrator MTI-F, i.e. (\ref{IFSW}) with (\ref{F s}),
under the assumption (\ref{assumption}), there exists a constant $\tau_0>0$
independent of $\eps$ and $n$, such that for any $0<\eps\leq 1$
\begin{align}
  &|e^n|+\eps^2|\dot{e}^n|\lesssim\frac{\tau^2}{\eps^2},\qquad
  |e^n|+\eps^2|\dot{e}^n|\lesssim \tau^2+\eps^2,\qquad 0<\tau\leq\tau_0,\label{LSADerror1}\\
  &|y^n|\leq C_0+1,\qquad |\dot{y}^n|\leq\frac{C_0+1}{\eps^2},
  \qquad 0\leq n \leq \frac{T}{\tau}.\label{LSADbound}
\end{align}
Thus by taking the minimum of two error bounds for $0<\eps\le 1$, we have a uniform error bound as
\begin{equation}\label{LSADerror2}
|e^n|+\eps^2|\dot{e}^n|\lesssim\min_{0<\eps\le 1}\left\{\frac{\tau^2}{\eps^2}, \ \tau^2+\eps^2\right\}\lesssim \tau,\quad 0\leq n \leq \frac{T}{\tau},\quad 0<\tau\leq\tau_0.
\end{equation}
\end{theorem}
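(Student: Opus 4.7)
The plan is a joint induction on $n\leq T/\tau$ that establishes (\ref{LSADerror1}) and (\ref{LSADbound}) at the same time. The inductive hypothesis $|y^n|\leq C_0+1$, $\eps^2|\dot y^n|\leq C_0+1$ keeps the reconstructed initial data $z_\pm^{(0)}$ in a bounded region, so all Lipschitz constants for the smooth nonlinearities $g_\pm$, $g_{k,\pm}$, $h$, $f_\pm$, $\dot f_\pm$ used by MTI-F are uniform in $\eps$ and $n$. For the inductive step I would compare the scheme output $(z_\pm^{n+1},\dot z_\pm^{n+1},r^{n+1},\dot r^{n+1})$ of (\ref{F s}) with the exact flow $(z_\pm^n(\tau),\dot z_\pm^n(\tau),r^n(\tau),\dot r^n(\tau))$ of the MDF system (\ref{pLSADz1}) initialized at (\ref{FSW-i21}) from $(y^n,\dot y^n)$, and then pull the comparison back to $(y^{n+1},\dot y^{n+1})$ through (\ref{IFSW}). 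Since the reconstruction is an isometry in the combined seminorm $|\cdot|+\eps^2|\cdot|$ up to unimodular phases, it suffices to control errors on the decomposed variables and then apply a discrete Gronwall argument.

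For the first bound $\tau^2/\eps^2$, I would work directly from (\ref{assumption}). Differentiating (\ref{pLSADz1}) yields $\|z_\pm^n\|_\infty,\|\dot z_\pm^n\|_\infty\lesssim 1$ and $\|\ddot z_\pm^n\|_\infty\lesssim 1/\eps^2$ on $[0,\tau]$, with analogous estimates for $r^n$; the chain rule then gives $\|\ddot f_\pm^n\|_\infty,\|\ddot g_{k,\pm}^n\|_\infty\lesssim 1/\eps^2$. The Gautschi-type quadratures (\ref{gen F appr})--(\ref{quad6543}) carry a local remainder $\lesssim\tau^3\|\ddot f_\pm^n\|_\infty\lesssim\tau^3/\eps^2$ on $z_\pm$, and the same bound applies to the $r$-equation (with the trapezoidal residue for the $h^n$-term treated separately, using $h^n(0)=0$). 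An $\eps$-uniform bound on the EWI propagators $a(\tau),b(\tau),\sin(\omega\tau)/\omega,\cos(\omega\tau)$ then lets the local errors accumulate to $\tau^2/\eps^2$ globally via discrete Gronwall.

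The second bound $\tau^2+\eps^2$ relies on the multiscale smallness structure of MDF. The $r^n$-component is genuinely small: since $r^n(0)=0$, $\dot r^n(0)=O(1)$, and the natural frequency $\omega=\sqrt{1+\eps^2\alpha}/\eps^2$ is non-resonant with the forcing modes $\fe^{i(2k+1)s/\eps^2}$, $k\geq 1$, the representation (\ref{r gr}) yields $\|r^n\|_\infty\lesssim\eps^2$ and $\|\dot r^n\|_\infty\lesssim 1$ with $\eps$-independent constants, so any $O(\eps^2)$-scale defect in the $r$-approximation is absorbed into the target. For $z_\pm^n$ I would introduce a secondary ansatz $z_\pm^n(s)=Z_\pm^n(s)+\eps^2 w_\pm^n(s)$, where $Z_\pm^n$ solves the $\eps=0$ limit equation $2i\dot Z+\alpha Z+f_\pm(Z_+,Z_-)=0$ driving MTI-FA and $w_\pm^n$ is a residual; well-preparedness of (\ref{FSW-i21}) propagates to $\|w_\pm^n\|_\infty,\|\dot w_\pm^n\|_\infty\lesssim 1$ uniformly in $\eps$. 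The EWI applied to the smooth $Z_\pm^n$ part is a standard trigonometric integrator with global error $O(\tau^2)$, while the $\eps^2 w_\pm^n$ contribution is absorbed into the $O(\eps^2)$ budget; summing over the reconstruction yields the second bound.

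Putting the two local estimates together produces (\ref{LSADerror1}); since $\min\{\tau^2/\eps^2,\tau^2+\eps^2\}\lesssim\tau$ is small for $\tau\leq\tau_0$, the a priori bound (\ref{LSADbound}) closes the induction and (\ref{LSADerror2}) follows. The most delicate step I expect is the second bound: because MDF retains the $\eps^2\ddot z_\pm$ term, the propagators $a(\tau),b(\tau)$ carry the fast eigenvalue $\lambda_+=O(1/\eps^2)$, and one must show that the Gautschi remainder against this fast mode, acting on the residual $w_\pm^n$, cancels well enough to aggregate only to $O(\eps^2)$ over $O(1/\tau)$ steps. This is precisely where oscillatory-integral cancellation in the spirit of the modulated Fourier expansion analysis cited in the introduction is indispensable, and where the proof will require the most care.
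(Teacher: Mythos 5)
Your proposal reproduces the paper's architecture almost exactly: the same splitting of the error into a propagated part (exact flow launched from the numerical data) and a local truncation part, the same weighted energy $\mcE(e,\dot e)=\eps^2|\dot e|^2+(\alpha+\eps^{-2})|e|^2$, two independent local bounds (a crude Taylor bound of order $\tau^3/\eps^2$ per step, and a multiscale bound exploiting $\|r^n\|_{L^\infty(0,\tau)}\lesssim\eps^2$ together with oscillatory integration by parts), followed by discrete Gronwall and an induction that closes the a priori bound (\ref{LSADbound}). The one place you genuinely diverge is the second bound for the $z_\pm$ component. The paper needs neither your ansatz $z_\pm^n=Z_\pm^n+\eps^2 w_\pm^n$ nor any cancellation of the Gautschi remainder against the fast eigenvalue: Lemma \ref{lm:reg_L} (quoting the appendix of \cite{Cai1}) gives the uniform bound $\bigl\|\tfrac{d^m}{dt^m}z_\pm^n\bigr\|_{L^\infty(0,\tau)}\lesssim 1$ for $m\le 2$ for the well-prepared data (\ref{FSW-i21}) --- only the third derivative costs $\eps^{-2}$ --- and the kernel $b(s)$ in (\ref{asbslbd}) is bounded uniformly in $\eps$ because the $\eps^{-2}$ carried by $\lambda_+$ is cancelled by $\eps^2(\lambda_--\lambda_+)=2\sqrt{1+\alpha\eps^2}$. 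Hence the quadrature defect $\tfrac12\int_0^\tau\theta^2\,b(\tau-\theta)\,\ddot f_\pm^n\,d\theta$ is directly $O(\tau^3)$ as in (\ref{zpmn479}), and the ``most delicate step'' you flag at the end simply does not arise. Your route through the limit system $2i\dot Z+\alpha Z+f_\pm=0$ is essentially how the uniform second-derivative bound is proved in \cite{Cai1}, so it buys the same conclusion at the cost of re-deriving that regularity inside the error analysis; note also that the bound $\|\ddot z_\pm^n\|\lesssim\eps^{-2}$ you invoke for the first estimate is weaker than what actually holds and than what your second estimate requires. Finally, tighten the bookkeeping for the $r$-part: with the energy method a per-step defect must be $O(\tau\eps^2)$, not merely ``$O(\eps^2)$-scale,'' to aggregate to $O(\eps^2)$ over $T/\tau$ steps, which is precisely what the integration-by-parts bounds of the type (\ref{Ink})--(\ref{Pn-2}), and their analogue (\ref{rnrnp1378}) for MTI-F, deliver.
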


\begin{remark}
If $\phi_1$, $\phi_2\in {\mathbb R}$, $y:=y(t)$ is a real-valued function and
$f(y):\ {\mathbb R}\to {\mathbb R}$  in (\ref{WODE}), then it is easy to see
that $z_-^n(s) =z_+^n(s)$ for $0\le s\le \tau$ in (\ref{ansatz}) from (\ref{LSADz2}) and (\ref{FSW-i21}),
and (\ref{FSWDz31}) and (\ref{FSW-i51}) for MDF and MDFA, respectively.
Thus the multiscale decompositions MDF and MDFA and their numerical integrators
MTI-F and MTI-FA as well as their error estimates are still valid and can be simplified.
We omit the details here for brevity.
\end{remark}

\begin{remark}
The two MTIs for the problem (\ref{WODE}), i.e. MTI-FA and MTI-F, are completely different
with the modulated Fourier expansion methods proposed in the literatures \cite{Lubich1,Lubich2,Cohen,Cohen1,Sanz,Grimm} for the problem (\ref{eq of MD}) in the following aspects. (i) As stated in Section 1, they are used to solve second order ODEs
with different oscillatory behavior in the solutions. (ii) In our MTIs, we adapt the expansion (\ref{ansatz}) at each time
interval $[t_n,t_{n+1}]$ and update its initial data  via proper transmission conditions between different time intervals, and the decoupled system consists of only three equations including two equations for the two leading
frequencies and one equation for reminder. However, in the modulated Fourier expansion methods,
it expands the solution only once at $t=0$ and up to finite terms with increasing frequencies by dropping the reminder, and thus the decoupled system consists of finite number of equations.
(iii) Our MTIs are uniformly accurate
for $\varepsilon\in (0,1]$ for the problem (\ref{WODE}) and the error only depends
on the time step and is independent of $\varepsilon$ and the terms in the expansion (\ref{ansatz}).
However, if the modulated Fourier expansion methods are applied to the problem (\ref{WODE}),
they are usually asymptotic preserving methods instead of uniformly accurate methods.
In addition, the errors depend on time step, $\varepsilon$ and
the number of terms used in the expansion. If high accuracy is needed, one needs to use many terms in the expansion and thus they might be expensive. (iv) Our MTIs work for the regimes when $\varepsilon$ is small, large and intermediate;
where the modulated Fourier expansion methods only work for the regime when $\varepsilon$ is small.
\end{remark}

\section{Multiscale time integrators (MTIs) for general nonlinearity}
\label{sec:4}
In this section, based on the MDFA (\ref{FSWDz31}) or MDF (\ref{LSADz2}) for a general gauge invariant nonlinearity $f(y)$ in (\ref{WODE}), we propose two multiscale time integrators (MTIs) for solving (\ref{WODE}). We will adopt the notations introduced in section \ref{sec:3}.

\subsection{A MTI based on MDFA}\label{gen MTI-F}
Based on the MDFA (\ref{FSWDz31}), we propose a MTI.

Integrating the first two equations for $z_\pm^n(s)$ in (\ref{FSWDz31}) over  $[0,\tau]$, we get
\begin{equation}\label{MTI-F z_pm}
z_\pm^n(\tau)=\fe^{\frac{i\alpha}{2} \tau}z_\pm^n(0)+\frac{i}{2}\int_0^\tau\fe^{\frac{i\alpha}{2}(\tau-s)}f_\pm^n(s)ds.
\end{equation}
Similar to (\ref{quad6543}), we approximate the integral term by a quadrature in the Gautschi's type, i.e.,
\begin{align}
z_\pm^n(\tau)&\approx\fe^{\frac{i\alpha}{2} \tau}z_\pm^n(0)+\frac{i}{2}\int_0^\tau\fe^{\frac{i\alpha}{2}(\tau-s)}
\left[f_\pm^n(0)+s\dot{f}_\pm^n(0)\right]ds\nonumber\\
&=\fe^{\frac{i\alpha}{2} \tau}z_\pm^n(0)+\beta_1f_\pm^n(0)+\beta_2\dot{f}_\pm^n(0),\label{MTIap1}
\end{align}
where
\begin{equation*}
\beta_1=\frac{i}{2\alpha}\left(\fe^{\frac{i\alpha}{2}\tau}-1\right),\quad\quad \beta_2=\frac{1}{2\alpha^2}\left(2\fe^{\frac{i\alpha}{2}\tau}-i\alpha\tau-2\right).
\end{equation*}
Taking $s=\tau$ in the first two equations in (\ref{FSWDz31}), we find
\begin{equation}
\dot{z}_\pm^n(\tau)=\frac{i\alpha}{2}z_\pm^n(\tau)+\frac{i}{2}f_\pm^n(\tau).
\end{equation}

For the third equation in (\ref{FSWDz31}), we apply the exponential wave integrator (EWI) to solve it. Using the variation-of-constant formula, we obtain
\begin{equation}\label{gen r tau}
\left\{
\begin{split}
&r^n(\tau)=\frac{\sin(\omega \tau)}{\omega}\dot{r}^n(0)-
\int_0^\tau\frac{\sin\left(\omega(\tau-\theta)
\right)}{\eps^2\omega}\left[f_r^n(\theta)+\eps^2u^n(\theta)\right]d\theta,\\
&\dot{r}^n(\tau)=\cos(\omega \tau)\dot{r}^n(0)-
\int_0^\tau\frac{\cos\left(\omega(\tau-\theta)\right)}
{\eps^2}\left[f_r^n(\theta)+\eps^2u^n(\theta)\right]d\theta,
\end{split}
\right.
\end{equation}
To have an explicit integrator and achieve  uniform error bounds,
we approximate the two integral terms in (\ref{gen r tau})
 by quadratures intended to preserve different
scales produced by the two integrands. In order to do so, due to that $f_r^n(0)\ne0$,
we introduce two linear interpolations for $f^n_r(\theta)$ on the interval $[0,\tau]$ as
\begin{equation}\label{interpolation}
l_1^n(\theta)=\frac{\tau-\theta}{\tau}f_r^n(0),\qquad l_2^n(\theta)=\frac{\theta}{\tau}f_r^n(\tau)+\frac{\tau-\theta}{\tau}f_r^n(0),
 \qquad 0\le \theta\le \tau.
\end{equation}
In addition, differentiating the first two equations in (\ref{FSWDz31}) with respect to $s$ and then taking $s=0$ or $\tau$, we get
\begin{equation}
\ddot{z}_\pm^n(0)=\frac{i\alpha}{2}\dot{z}_\pm^n(0)+\frac{i}{2}\dot{f}_\pm^n(0),\qquad
\ddot{z}_\pm^n(\tau)=\frac{i\alpha}{2}\dot{z}_\pm^n(\tau)+\frac{i}{2}\dot{f}_\pm^n(\tau).
\end{equation}
Combing the above and applying the trapezoidal rule, we have
\begin{align}
&\int_0^\tau\frac{\sin(\omega(\tau-\theta))}{\eps^2\omega}
\left[f_r^n(\theta)+\eps^2u^n(\theta)\right]d\theta\nonumber\\
&=\int_0^\tau\frac{\sin(\omega(\tau-\theta))}{\eps^2\omega}
\left[f_r^n(\theta)-l_1^n(\theta)+\eps^2u^n(\theta)\right]d\theta
+\int_0^\tau\frac{\sin(\omega(\tau-\theta))}{\eps^2\omega}l_1^n(\theta)d\theta\nonumber\\
&\approx\frac{\tau\sin(\omega\tau)}{2\omega}u^n(0)+\gamma_1f_r^n(0), \label{quad123}
\end{align}
\begin{align}\label{f_pm appr}
& \int_0^\tau\frac{\cos(\omega(\tau-\theta))}{\eps^2}
\left[f_r^n(\theta)+\eps^2u^n(\theta)\right]d\theta\nonumber\\
&=\int_0^\tau\frac{\cos(\omega(\tau-\theta))}{\eps^2}
\left[f_r^n(\theta)-l_2^n(\theta)+\eps^2u^n(\theta)\right]d\theta
+\int_0^\tau\frac{\cos(\omega(\tau-\theta))}{\eps^2}l_2^n(\theta)d\theta\nonumber\\
&\approx\frac{\tau}{2}\left[\cos(\omega\tau)u^n(0)+u^n(\tau)\right]
+\gamma_2f_r^n(0)+\gamma_3f_r^n(\tau),
\end{align}
where
\begin{equation*}
\gamma_1=\frac{1-\cos(\omega\tau)}{\eps^2\omega^2},
\quad\gamma_2=\frac{\cos(\omega\tau)+\omega\tau\sin(\omega\tau)-1}{\eps^2\omega^2\tau},\quad
 \gamma_3=\frac{1-\cos(\omega\tau)}{\eps^2\omega^2\tau}.
\end{equation*}
Plugging (\ref{quad123}) and (\ref{f_pm appr}) into (\ref{gen r tau}),  we obtain
\begin{equation}\left\{
\begin{split}
&r^n(\tau)\approx\frac{\sin(\omega\tau)}{\omega}\left[\dot{r}^n(0)
-\frac{\tau}{2}u^n(0)\right]-\gamma_1f_r^n(0),\\
&\dot{r}^n(\tau)\approx\cos(\omega\tau)\left[\dot{r}^n(0)-\frac{\tau}{2}u^n(0)
\right]-\frac{\tau}{2}u^n(\tau)-\gamma_2f_r^n(0)
-\gamma_3f_r^n(\tau),
\end{split}\right.
\end{equation}
where
\begin{equation*}
u^n(0)=\ddot{z}_+^n(0)+\overline{\ddot{z}_-^n}(0),\qquad u^n(\tau)=\fe^{i\tau/\eps^2}\ddot{z}_+^n(\tau)+\fe^{-i\tau/\eps^2}\overline{\ddot{z}_-^n}(\tau).
\end{equation*}

\noindent{\sl Detailed numerical scheme}

\medskip
Following the same notations introduced in Subsection
\ref{subsec:LA}, choosing $y^0=y(0)=\phi_1$ and
$\dot{y}^0=\dot{y}(0)=\eps^{-2}\phi_2$, for $n=0,1,\ldots,$
$y^{n+1}$ and $\dot{y}^{n+1}$ are updated in the same way as
(\ref{IFSW})-(\ref{F_ini1}) except that
\begin{equation}\label{gen FA s}
\left\{
\begin{split}
&  z_\pm^{n+1}=\fe^{\frac{i\alpha}{2}\tau}z_\pm^{(0)}+\beta_1f_\pm\left(z_+^{(0)},z_-^{(0)}\right)+
\beta_2\dot{f}_\pm^{(0)},\\
& r^{n+1}=\frac{\sin(\omega\tau)}{\omega}\left(\dot{r}^{(0)} -\frac{\tau}{2}u^{(0)}\right)-\gamma_1f_r\left(z_+^{(0)},z_-^{(0)},r^{(0)};0\right),\\
&\dot{z}_\pm^{n+1}=\frac{i}{2}\left[\alpha z_\pm^{n+1}+f_\pm(z_+^{n+1},z_-^{n+1})\right],\\
&\dot{r}^{n+1}=\cos(\omega\tau)\left(\dot{r}^{(0)}-\frac{\tau}{2}u^{(0)}\right)-\frac{\tau}{2}u^{n+1}-\gamma_2f_r(z_+^{(0)},z_-^{(0)},r^{(0)};0)\\
&\ \qquad\quad -\gamma_3f_r(z_+^{n+1},z_-^{n+1},r^{n+1};\tau),\\
&\ddot{z}_\pm^{n+1}=\frac{i\alpha}{2}\dot{z}_\pm^{n+1}+\frac{i}{2}\frac{d}{ds}\left[f_\pm(z_+(s),z_-(s))\right]
\Big|_{\left\{z_\pm=z_\pm^{n+1},\ \dot{z}_\pm=\dot{z}_\pm^{n+1}\right\}},\\
\end{split}\right.
\end{equation}
with
\begin{equation}\label{gen FA e}
\left\{
\begin{split}
  &\dot{z}_\pm^{(0)}=\frac{i}{2}\left[\alpha z_\pm^{(0)}+f_\pm\left(z_+^{(0)},z_-^{(0)}\right)\right],\\
&u^{(0)}=\frac{i}{2}\left[\alpha\left(\dot{z}_+^{(0)}-\overline{\dot{z}_-^{(0)}}\right)+\dot{f}_+^{(0)}-\overline{\dot{f}_-^{(0)}}\right],\\
&\dot{f}_{\pm}^{(0)}=\frac{d}{ds}\left[f_\pm(z_+(s),z_-(s))\right]\Big|_{\left\{z_\pm=z_\pm^{(0)},\ \dot{z}_\pm=\dot{z}_\pm^{(0)}\right\}}.
 \end{split}\right.
\end{equation}

\begin{remark}
As it can be seen from the above integrators, one needs to evaluate functions $f_\pm^n$ and $\dot{f}_\pm^n$ in the scheme. In fact, these functions are given in the integral forms as (\ref{f_pm def}). In practice,
if explicit formulas are not available, they can be computed numerically via
the following composite trapezoidal rule
\begin{equation*}
f_\pm(z_+,z_-)\approx\frac{1}{N}\sum_{j=0}^{N-1}f\left(z_\pm+\fe^{i\theta_j}\overline{z_\mp}\right),
\qquad z_+, z_- \in \Bbb C,
\end{equation*}
where $N\in\bN$ is chosen to be large enough and $\theta_j=\frac{2\pi }{N}j$ for $j=0,1,\ldots,N$. Since the integrand  $f\left(z_\pm+\fe^{i\theta}\overline{z_\mp}\right)$ in (\ref{f_pm def}) is a periodic function with period $T=2\pi$, thus it is spectrally accurate to approximate the integrals in (\ref{f_pm def}) via the composite trapezoidal rule!
\end{remark}

\subsection{Another MTI based on MDF}
\label{SEC:2.5}
Based on the MDF (\ref{LSADz2}), we propose another MTI as follows.

For the first two equations in (\ref{pLSADz1}), the integrator follows (\ref{2VCFz})-(\ref{gen F appr}) totally.
As for the approximations to $r^n(\tau)$ and $\dot{r}^n(\tau)$, we follow the EWIs (\ref{gen r tau})-(\ref{f_pm appr}) by  setting $u^n=0$.

\medskip

\noindent{\sl Detailed numerical scheme}

\medskip

Following the same notations introduced in subsection \ref{subsec:LA}, choosing $y^0=y(0)=\phi_1$ and $\dot{y}^0=\dot{y}(0)=\eps^{-2}\phi_2$, for $n=0,1,\ldots,$ $y^{n+1}$ and $\dot{y}^{n+1}$ are updated in the same way as (\ref{IFSW}), (\ref{F s}) and (\ref{gen FA e}) except that
\begin{equation}\left\{
\begin{split} &r^{n+1}=\frac{\sin(\omega\tau)}{\omega}\dot{r}^{(0)}-\gamma_1f_r\left(z_+^{(0)},z_-^{(0)},r^{(0)};0\right),\\
&\dot{r}^{n+1}=\cos(\omega\tau)\dot{r}^{(0)}-\gamma_2f_r\left(z_+^{(0)},z_-^{(0)},r^{(0)};0\right)-
\gamma_3f_r\left(z_+^{n+1},z_-^{n+1},r^{n+1};\tau\right).
\end{split}\right.\label{gen F s}
\end{equation}

\section{Classical numerical integrators}
\label{sec:5}

For comparison purpose, in this section, we recall two
classes of widely used numerical methods for directly
integrating the problem (\ref{WODE}).  The methods include
exponential wave integrators (EWIs) and conservative/nonconservative
finite difference (FD) integrators.
For simplicity of notations, we only consider the pure
power nonlinearity, i.e. $f$ in (\ref{WODE}) satisfies
(\ref{power}).

\subsection{Exponential wave integrators (EWIs)}
Similar to (\ref{r tau gr}) and (\ref{gen r tau}), we re-write the solution of (\ref{WODE}) near $t=t_n$
by using the variation-of-constant formula, i.e.
\begin{equation}\label{VCFy}
y(t_n+s)=\cos(\omega s)y(t_n)+\frac{\sin(\omega s)}{\omega}\dot{y}(t_n)-\int_0^s\frac{\sin(\omega(s-\theta))}{\eps^2\omega}f^n(\theta)d\theta,
\end{equation}where $f^n(\theta):=f(y(t_n+\theta))$.
Taking $s=\pm\tau$ in (\ref{VCFy}) and then summing them up, we have
\begin{equation}\label{VCFy tau}
y(t_{n+1})+y(t_{n-1})=2\cos(\omega \tau)y(t_n)-\int_0^\tau\frac{\sin(\omega(\tau-\theta))}
{\eps^2\omega}\left[f^n(\theta)+f^n(-\theta)\right]d\theta.
\end{equation}
Then EWIs approximate the integral term by proper quadratures.  For example, if a Gautschi's type quadrature  \cite{Gaustchi,Grimm,Dong,Lubich2} is applied, one can end up with the following EWI in Gautschi's type (EWI-G).  Following the same notations introduced in (\ref{IFSW}), the stabilized EWI-G \cite{Dong} reads
\begin{equation}\label{EWIG}
  y^{n+1}=\left\{
  \begin{split}
  \displaystyle
   &  -y^{n-1}+2\cos\left(\omega^n\tau\right)y^n
    -2G^n,
    & n\geq 1,\\
  \displaystyle
  &   \cos\left(\omega^0\tau\right)\phi_1+\frac{\sin\left(\omega^0\tau\right)}{\eps^2\omega^0}\phi_2-G^0,&n=0,
  \end{split}
  \right.
\end{equation}
where
\begin{align*}
&  G^n=\frac{1-\cos\left(\omega^n\tau\right)}{\eps^2(\omega^n)^2}
\left[g\left(|y^n|^2\right)y^n-\alpha^ny^n\right], \qquad n\ge0,\\ & \omega^n=\frac{\sqrt{1+\eps^2(\alpha+\alpha^{n})}}{\eps^2},\quad
 \alpha^{n}=\max\left\{\alpha^{n-1},g\left(|y^n|^2\right)\right\}, \mbox{~with~}\alpha^{-1}=0.
\end{align*}
Here a linear stabilizing term with stabilizing constant $\alpha^n$ is introduced so that the method
is unconditionally stable \cite[Theorem 6]{Dong}. Of course, one can use other ways to
filter oscillation in the resonance regime
\cite{Lubich1,Lubich2,Hairer1,Hairer2,Sanz} instead of the above linear stabilizing term. In addition,
if the approximation to $\dot{y}(t_n)$ is of interest, for example, evaluating the discrete energy, one can use
\begin{equation}\label{dy EWI}
  \dot{y}^{n+1}=\left\{
  \begin{split}
  &\dot{y}^{n-1}-2\omega\sin(\omega\tau)y^n-2\frac{\sin(\omega\tau)}{\eps^2\omega}g(|y^n|^2)y^n,\quad &n\geq1,\\
  &-\omega\sin(\omega\tau)y^0+\cos(\omega\tau)\dot{y}^0-\frac{\sin(\omega\tau)}{\eps^2\omega}g(|y^0|^2)y^0,\quad &n=0,
  \end{split}
  \right.
\end{equation}
which is derived similarly from the differentiation of (\ref{VCFy}) with respect to $s$ and then taking $s=\pm \tau$.

On the other hand, if the standard trapezoidal rule is applied
to approximate the integral in (\ref{VCFy tau}), then one can end up with the following EWI in Deuflhard's type (EWI-D) \cite{Deuflhard,Lubich1}.  Again, following the same notations introduced in (\ref{IFSW}),  EWI-D reads
\begin{equation}\label{EWID}
  y^{n+1}=\left\{
  \begin{split}
  \displaystyle
   &  -y^{n-1}+2\cos\left(\omega\tau\right)y^n
    - 2 D^n,
    & n\geq 1,\\
  \displaystyle
  &   \cos\left(\omega\tau\right)\phi_1+\frac{\sin\left(\omega\tau\right)}{\eps^2\omega}\phi_2
  -D^0,&n=0,
  \end{split}
  \right.
\end{equation}
where,
\begin{equation*}
  D^n = \frac{\tau\sin(\omega\tau)}{2\eps^2\omega}g\left(|y^n|^2\right)y^n,\quad n\geq 0.
\end{equation*}
Similarly, to approximate $\dot{y}(t_n)$, we can use the scheme (\ref{dy EWI}).

Generalizations of the above two EWIs based on (\ref{VCFy}) are the mollified impulse methods or EWIs with filters \cite{Garcia,Lubich1,Lubich2,Grimm}, which have been well-developed for solving problem (\ref{eq of MD}) with a uniform convergence and good energy preserving properties. Now with a stronger nonlinearity in the problem (\ref{WODE}), the scheme reads \begin{equation}\label{mollified impulse}
\left\{\begin{split}
y^{n+1}=&\cos(\omega\tau)y^n+\frac{\sin(\omega\tau)}{\omega}\dot{y}^n+\frac{\tau^2}{2\eps^2}\psi(\omega\tau)f\left(\phi(\omega\tau)y^n\right),\\
\dot{y}^{n+1}=&-\omega\sin(\omega\tau)y^n+\cos(\omega\tau)\dot{y}^n+\frac{\tau}{2\eps^2}\Big[\psi_0(\omega\tau)f\left(\phi(\omega\tau)y^n\right)\\
&+\psi_1(\omega\tau)f\left(\phi(\omega\tau)y^{n+1}\right)\Big],
\end{split}\right.
\end{equation}
where $\psi,\ \phi,\ \psi_0$ and $\psi_1$ are known as the filters under some consistent conditions \cite{Lubich1,Lubich2}. For example, two popular sets of filters mentioned in \cite{Lubich1,Lubich2} are choosing as
\begin{equation}\label{filter0}
\psi_0(\rho)=\cos(\rho)\psi_1(\rho),\qquad \psi_1(\rho)=\frac{\psi(\rho)}{{\rm sinc}(\rho)},
\end{equation}
with
\begin{equation}
\psi(\rho)=\phi(\rho){\rm sinc}(\rho),\qquad \phi(\rho)={\rm sinc}(\rho),\label{filter1}
\end{equation}
or
\begin{equation}
\psi(\rho)={\rm sinc}^2(\rho),\qquad \phi(\rho)=1,\label{filter2}
\end{equation}
where ${\rm sinc}(\rho)=\sin(\rho)/\rho$ for $\rho\in{\mathbb R}$. In the following, we refer to the EWIs (\ref{mollified impulse})-(\ref{filter0}) with filters (\ref{filter1}) as EWI-F1, and (\ref{mollified impulse})-(\ref{filter0}) with filters (\ref{filter2}) as EWI-F2.

For convergence results of the EWIs, we have the following theorem.
\begin{theorem}[Error bounds of EWIs]\label{tm3}
  For the EWI-G (\ref{EWIG}), EWI-D (\ref{EWID}), EWI-F1 (\ref{filter1}) and EWI-F2 (\ref{filter2}), under the assumption (\ref{assumption}), there exists a constant $\tau_0>0$ independent of $\eps$ and $n$, such that for any $0<\eps\leq 1$, when $0 <\tau\leq\tau_0$ satisfies $\tau\lesssim\eps^2$,
  \begin{equation}\label{erewi-g}
    |e^n|+\eps^2|\dot{e}^n|\lesssim\frac{\tau^2}{\eps^4},\qquad 0\leq n\leq\frac{T}{\tau}.
  \end{equation}
\end{theorem}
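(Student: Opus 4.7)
The plan is to carry out a local-global error analysis by (i) writing each scheme as a perturbation of the exact variation-of-constants identity (\ref{VCFy}) or (\ref{VCFy tau}), (ii) estimating the local truncation error by Taylor-expanding $f^n(\theta) := f(y(t_n+\theta))$ together with the quadrature weights, and (iii) propagating the error by a discrete energy argument, closing the induction via a bootstrap that exploits the meshing hypothesis $\tau \lesssim \eps^2$.

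First, for EWI-G and EWI-D, I would plug the exact solution into the three-term recursion and isolate the local truncation residual $\eta^n$. The key observation is the symmetry identity
\begin{equation*}
f^n(\theta) + f^n(-\theta) \;=\; 2\,f^n(0) + \theta^{2}\,\partial_\theta^{2}f^n(0) + \mathcal{O}(\theta^{4}),
\end{equation*}
together with the bounds $\partial_\theta f^n = f'(y)\,\dot y = \mathcal{O}(\eps^{-2})$ and $\partial_\theta^{2} f^n = \mathcal{O}(\eps^{-4})$ which follow from assumption (\ref{assumption}) and the chain rule. Since the weight $\sin(\omega(\tau-\theta))/(\eps^{2}\omega)$ is uniformly $\mathcal{O}(1)$ (as $\eps^{2}\omega = \sqrt{1+\eps^{2}\alpha}$), and both the Gautschi-type quadrature in EWI-G and the one-point trapezoidal rule in EWI-D are exact on constants and reproduce the correct leading term, the local error satisfies $|\eta^n| \lesssim \tau^{3}\,\|\partial_\theta^{2}f^n\|_{L^\infty} \lesssim \tau^{3}/\eps^{4}$. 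For EWI-F1 and EWI-F2, the same bound follows from the consistency conditions on the filters $(\psi,\phi,\psi_0,\psi_1)$, which imply that the one-step update reproduces the variation-of-constants formula up to an $\mathcal{O}(\tau^{3}/\eps^{4})$ residual.

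Second, I would propagate $\eta^n$ into a global estimate for $e^n$ and $\dot e^n$. Writing the error as the sum of a homogeneous part driven by the trigonometric phase and an inhomogeneous part driven by the difference $f(y(t_n)) - f(y^n)$, the homogeneous flow is an isometry in the discrete energy
\begin{equation*}
\mathcal E^{n} := \bigl|e^{n+1}-\fe^{i\omega\tau} e^n\bigr|^{2} + \bigl|e^{n+1}-\fe^{-i\omega\tau} e^n\bigr|^{2},
\end{equation*}
(or its one-step analogue for EWI-F1/F2), while the nonlinear part is locally Lipschitz with constant $\mathcal O(1)$ on bounded sets in view of (\ref{power}). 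Summing $N = T/\tau$ local contributions and applying a discrete Gronwall inequality yields $|e^n|+\eps^{2}|\dot e^n| \lesssim T\,\tau^{2}/\eps^{4}$, which is (\ref{erewi-g}). The Lipschitz constant is controlled by assuming a priori $|y^n|\le C_0+1$; this is closed by the bootstrap, since $\tau \lesssim \eps^{2}$ gives $\tau^{2}/\eps^{4}\lesssim 1$, so $|e^n|\lesssim 1$ and $|y^n|\le |y(t_n)| + |e^n| \le C_0 + 1$ at the next step.

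The main obstacle is the stability of the three-term recurrence in the regime $\omega\tau = \mathcal O(1/\eps)$, where $\cos(\omega\tau)$ may be arbitrary in $[-1,1]$ and the usual discrete derivative estimates lose powers of $\eps$. For EWI-G this is the reason for the stabilizer $\alpha^n$ (giving unconditional stability by [Dong, Thm.~6]); for EWI-D and the filtered variants the same discrete energy identity works but only once $\tau \lesssim \eps^{2}$ prevents resonant amplification, which is precisely the hypothesis in the statement. Tracking the $\eps$-dependence through this energy identity, rather than through crude pointwise bounds on $\omega$, is the delicate step — elsewhere the proof reduces to routine Taylor and Gronwall estimates.
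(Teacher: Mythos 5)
The paper does not actually prove Theorem \ref{tm3}; it defers to \cite[Theorem 9]{Dong} and \cite{Grimm}. So your proposal must be judged as a reconstruction, and as written it contains a genuine quantitative gap in the propagation step. Your discrete energy $\mathcal E^{n}=|e^{n+1}-\fe^{i\omega\tau}e^{n}|^{2}+|e^{n+1}-\fe^{-i\omega\tau}e^{n}|^{2}$ is indeed conserved by the homogeneous three-term recurrence, and summing $N=T/\tau$ local defects of size $O(\tau^{3}/\eps^{4})$ bounds $\sqrt{\mathcal E^{n}}$ by $O(T\tau^{2}/\eps^{4})$. But $\mathcal E^{n}$ does not control $|e^{n}|$ directly: from $\bigl(e^{n+1}-\fe^{-i\omega\tau}e^{n}\bigr)-\bigl(e^{n+1}-\fe^{i\omega\tau}e^{n}\bigr)=2i\sin(\omega\tau)\,e^{n}$ you must divide by $|\sin(\omega\tau)|$, and under the hypothesis $\tau\lesssim\eps^{2}$ one has $\omega\tau\lesssim 1$, hence $|\sin(\omega\tau)|\sim\omega\tau\sim\tau/\eps^{2}$. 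This inversion costs a factor $\eps^{2}/\tau$, so your chain of estimates yields only $|e^{n}|\lesssim \tau/\eps^{2}$, which is strictly weaker than the claimed $\tau^{2}/\eps^{4}$ precisely in the regime $\tau\lesssim\eps^{2}$. The same loss appears if you instead use the explicit solution formula $e^{n}=\sum_{k}\frac{\sin((n-k)\omega\tau)}{\sin(\omega\tau)}\eta^{k}$ with $|\eta^{k}|\lesssim\tau^{3}/\eps^{4}$. You flag this as ``the delicate step'' but do not resolve it, and it is the entire content of the theorem.

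The missing ingredient is a sharper local error estimate that supplies the compensating factor $\tau/\eps^{2}$. Concretely, the kernel in (\ref{VCFy tau}) satisfies $\bigl|\sin(\omega(\tau-\theta))\bigr|\leq\omega(\tau-\theta)$, so the quadrature defect of the symmetric integral is not merely $O(\tau^{3}/\eps^{4})$ but $O(\tau^{4}/\eps^{6})$; combined with $|\sin(\omega\tau)|\gtrsim\omega\tau\sim\tau/\eps^{2}$ (which is exactly where $\tau\lesssim\eps^{2}$ enters, keeping $\omega\tau$ away from the resonant set $\pi\mathbb{Z}$), the sum $\sum_{k}|\eta^{k}|/|\sin(\omega\tau)|$ gives $(T/\tau)\cdot(\eps^{2}/\tau)\cdot(\tau^{4}/\eps^{6})=T\tau^{2}/\eps^{4}$ as claimed. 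Alternatively, and closer to the strategy of Appendices \ref{Ap proof MTI_FA}--\ref{Ap proof MTI_F}, one recasts each scheme as a one-step map in the pair $(y^{n},\dot y^{n})$ using (\ref{dy EWI}) or (\ref{mollified impulse}) and measures errors in $\mcE(e,\dot e)=\eps^{2}|\dot e|^{2}+(\alpha+\eps^{-2})|e|^{2}$, for which the linear flow is an exact isometry and no $\sin(\omega\tau)$ inversion occurs; then a local defect of size $O(\tau^{3}/\eps^{4})$ in \emph{both} $|\xi|$ and $\eps^{2}|\dot\xi|$ suffices, but establishing it requires the refined moment bounds such as $|\omega\tau-\sin(\omega\tau)|\leq(\omega\tau)^{3}/6$ for the uncaptured linear Taylor term and control of the filter mismatches $\psi(\omega\tau)-\mathrm{sinc}^{2}(\omega\tau/2)=O((\omega\tau)^{2})$, neither of which appears in your sketch. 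A secondary point: your $O(\theta^{4})$ remainder in the symmetric expansion implicitly uses four time derivatives of $y$, whereas (\ref{assumption}) only provides two; only the $\theta^{2}$ term is available, which fortunately is all that is needed.
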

\begin{proof}
The proof proceeds in analogous lines as the method
used in \cite[Theorem 9]{Dong} towards the estimates in time or \cite{Grimm} and we omit the details here for brevity.\qed
\end{proof}

\subsection{Finite difference integrators}

For a sequence $\{y^n\}$, define the standard finite difference operators as
\[ \delta_t^+ y^n:=\frac{y^{n+1}-y^n}{\tau},\quad
 \delta_t^- y^n:=\frac{y^{n}-y^{n-1}}{\tau},\quad
 \delta_t^2 y^n:=\frac{y^{n+1}-2y^n+y^{n-1}}{\tau^2}.
\]
Then a conservative Crank-Nicolson finite difference (CNFD)   integrator  for solving (\ref{WODE}) reads
\begin{equation}\label{CNFD}
 \eps^2\delta^2_ty^n+\left(\alpha+\frac{1}{\eps^2}\right)\frac{y^{n+1}
 +y^{n-1}}{2}+\hat{F}\left(y^{n+1},y^{n-1}\right)=0,\quad n=1,2,\ldots,
\end{equation}
where
\begin{equation*}
  \hat{F}\left(y^{n+1},y^{n-1}\right):=
  \frac{F\left(|y^{n+1}|^2\right)-F\left(|y^{n-1}|^2\right)}{|y^{n+1}|^2-|y^{n-1}|^2}
  \cdot\frac{y^{n+1}+y^{n-1}}{2}.
\end{equation*}
A semi-implicit finite difference (SIFD) integrator   reads
\begin{equation}\label{SIFD}
  \eps^2\delta^2_ty^n+\left(\alpha+\frac{1}{\eps^2}\right)\frac{y^{n+1}+y^{n-1}}
  {2}+g\left(|y^n|^2\right)y^n=0,\quad n=1,2,\ldots.
\end{equation}
An explicit finite difference (EXFD) integrator, which is known as
the famous St\"{o}rmer-Verlet or leap-frog method \cite{Lubich1,Lubich2,Reich}, reads
\begin{equation}\label{EXFD}
  \eps^2\delta^2_ty^n+\left(\alpha+\frac{1}{\eps^2}\right)y^n+g\left(|y^n|^2\right)y^n=0,\quad n=1,2,\ldots.
\end{equation}
Here the initial conditions are discretized as (\ref{EWID}), i.e.
\begin{equation} \label{inita123}
  y^0=\phi_1,\quad
  y^1=\cos\left(\omega\tau\right)\phi_1+\frac{\sin\left(\omega\tau\right)}{\eps^2\omega}\phi_2
  -\frac{\tau\sin(\omega\tau)}{2\eps^2\omega}g\left(|\phi_1|^2\right)\phi_1.
\end{equation}
In order that the methods CNFD and SIFD are stable
uniformly in the regime $0<\eps\ll 1$, here $y^1$ is computed according
to the EWI-D (\ref{EWID}) with $n=0$ instead of the classical way below.
In fact, if one adapts the usual way to obtain $y^1$
as
\begin{equation} \label{inita124}
y^1=\phi_1+\frac{\tau\phi_2}{\eps^2}-\frac{\tau^2}{2\eps^2}
\left[\left(\alpha+\frac{1}{\eps^2}\right)\phi_1+g\left(|\phi_1|^2\right)\phi_1\right].
\end{equation}
Our numerical results suggest that it would cause severe instability issue
when $\tau=O(1)$ and $0<\eps\ll 1$. Thus we adopt (\ref{inita123}) instead of (\ref{inita124})
to discretize the initial data since we want to consider $0<\eps\le 1$, especially $0<\eps\ll1$.

For the above CNFD, SIFD and EXFD integrators, all are time
symmetric. CNFD is implicit, SIFD is implicit but can be
solved very efficiently, and EXFD is explicit. For CNFD, it
 conserves the following energy in the discretized level, i.e.
\begin{eqnarray*}
  E^n&:=&\eps^2\left|\delta^+_ty^n\right|^2+\left(\alpha+\frac{1}{\eps^2}\right)\frac{|y^{n+1}|^2+|y^n|^2}{2}
  +\frac{F\left(|y^{n+1}|^2\right)+F\left(|y^n|^2\right)}{2}\\
  &\equiv&E^0, \qquad n=0,1,\ldots
\end{eqnarray*}
However, at each step, a fully nonlinear equation needs to be solved,
which might be quite time-consuming. In fact, if the nonlinear
equation is not solved very accurately, then the above quantity
will not be conserved in practical computation \cite{Bao5}.
Thus CNFD is usually not adopted in practical computation,
especially for partial differential equations in high dimensions.
EXFD is very popular and powerful when $\eps=O(1)$,
however, it suffers from a server stability constraint
$\tau\lesssim\eps^2$  when $0<\eps\ll1$ \cite{Dong}.

For the above finite difference integrators, defining the error functions again as (\ref{error_fun}), we have the following convergence results, providing the exact solution $y(t)$ to (\ref{WODE}) satisfying
\begin{equation}\label{assumption2}
y(t)\in C^4(0,T),\qquad
\left\|\frac{d^m}{dt^m}y(t)\right\|_{L^\infty(0,T)}\lesssim\frac{1}{\eps^{2m}},\quad m=0,1,2,3,4.
\end{equation}
\begin{theorem}[Error bounds of CNFD and SIFD]\label{tm4}
For the CNFD (\ref{CNFD}) and SIFD (\ref{SIFD}),
under the assumption (\ref{assumption2}),
there exists a constant $\tau_0>0$ independent of $\eps$ and $n$,
such that for any $0<\eps\leq 1$
  \begin{equation}\label{ercnfd}
    |e^n|+\eps^2|\dot{e}^n|\lesssim\frac{\tau^2}{\eps^6},\qquad 0\leq n\leq\frac{T}{\tau},
    \qquad 0 <\tau\leq\tau_0.
  \end{equation}
\end{theorem}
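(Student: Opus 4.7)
The plan is to run the classical local-truncation plus discrete-energy estimate for symmetric two-step schemes while tracking every $\eps$-factor explicitly. I describe CNFD; SIFD is analogous, with a midpoint evaluation $g(|y^n|^2)y^n$ replacing the discrete-gradient nonlinearity $\hat F$. First, I insert the exact solution into (\ref{CNFD}) to form the local truncation error
\[
\xi^n := \eps^2\delta_t^2 y(t_n) + \Bigl(\alpha+\frac{1}{\eps^2}\Bigr)\frac{y(t_{n+1})+y(t_{n-1})}{2} + \hat F\bigl(y(t_{n+1}),y(t_{n-1})\bigr).
\]
Taylor expansion with the regularity assumption (\ref{assumption2}) and a mean-value identity for $\hat F$ (which rewrites it as $g(|y(t_n)|^2)y(t_n)+O(\tau^2)$) yields $|\xi^n|\lesssim \eps^2\tau^2\|y^{(4)}\|_{L^\infty(0,T)} + \eps^{-2}\tau^2\|\ddot y\|_{L^\infty(0,T)}+O(\tau^2)\lesssim \tau^2/\eps^6$.

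Subtracting (\ref{CNFD}) from the truncated identity produces the error equation $\eps^2\delta_t^2 e^n + (\alpha+\eps^{-2})(e^{n+1}+e^{n-1})/2 = \xi^n - R^n$ with $R^n = \hat F(y(t_{n+1}),y(t_{n-1})) - \hat F(y^{n+1},y^{n-1})$. Taking the real part of the inner product with $\overline{(e^{n+1}-e^{n-1})/(2\tau)}$ and applying the standard summation-by-parts identity produces a telescoping identity for the discrete energy
\[
\mcE^n := \eps^2|\delta_t^+ e^n|^2 + \Bigl(\alpha+\frac{1}{\eps^2}\Bigr)\frac{|e^{n+1}|^2+|e^n|^2}{2}.
\]
Young's inequality with weight $\eps^2$ converts the source into $(\eps^2/8)(|\delta_t^+ e^n|^2+|\delta_t^- e^n|^2)+C\eps^{-2}(|\xi^n|^2+|R^n|^2)$, whose first bracket is absorbed into $\mcE^n+\mcE^{n-1}$. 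A bootstrap $|y^n|\le C_0+1$ propagated by induction, together with the polynomial smoothness of $F$, yields an $\eps$-independent Lipschitz bound $|R^n|\lesssim |e^{n+1}|+|e^n|+|e^{n-1}|$; combined with the coercivity $\mcE^n\ge |e^n|^2/(2\eps^2)$, the $R^n$-piece becomes $C\mcE$ and drops into the Gronwall step, while the truncation piece contributes $\eps^{-2}|\xi^n|^2\lesssim \tau^4/\eps^{14}$. The EWI-D initialisation (\ref{inita123}) of $y^1$ has the same one-step accuracy, so $\mcE^0\lesssim \tau^4/\eps^{14}$.

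A discrete Gronwall inequality then yields $\mcE^n\lesssim \tau^4/\eps^{14}$ uniformly for $n\le T/\tau$. Extracting the individual components, $|e^n|^2\le 2\eps^2\mcE^n\lesssim \tau^4/\eps^{12}$ and $\eps^4|\delta_t^+ e^n|^2\le \eps^2\mcE^n\lesssim \tau^4/\eps^{12}$, which gives $|e^n|\lesssim \tau^2/\eps^6$ and $\eps^2|\delta_t^+ e^n|\lesssim \tau^2/\eps^6$. Interpreting $\dot y^n$ as the central difference $(y^{n+1}-y^{n-1})/(2\tau)$ and comparing with $\dot y(t_n)$ by one further Taylor step adds only $O(\tau^2\|y^{(3)}\|_{L^\infty})=O(\tau^2/\eps^6)$, so the claimed bound $|e^n|+\eps^2|\dot e^n|\lesssim \tau^2/\eps^6$ follows.

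The hard part will be the a priori bound $|y^n|\le C_0+1$ that underlies the Lipschitz estimate. For CNFD it is essentially free from the conservation of $E^n$, but one must exploit the $(\alpha+\eps^{-2})|y^n|^2$ term in $E^n$ to recover a unit-scale bound from an initial energy of size $O(\eps^{-2})$. For SIFD no exact invariant is available and the bound must be closed inductively alongside the error estimate; the smallness implicit in $\tau^2/\eps^6<1$ (equivalently $\tau\lesssim \eps^3$) is precisely what propagates the bootstrap, which is consistent with the $\eps$-scalability $\tau=O(\eps^3)$ of FD schemes noted in the Introduction.
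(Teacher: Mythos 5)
Your proposal is correct and follows essentially the same route as the paper, which does not write out the argument but simply defers to \cite[Theorems 2 and 5]{Dong} --- precisely the local-truncation / discrete-energy / summation-by-parts / Gronwall / induction technique you describe, with the same $\eps$-bookkeeping ($|\xi^n|\lesssim \tau^2/\eps^6$ from $\|y^{(4)}\|\lesssim\eps^{-8}$, hence $\mcE^n\lesssim\tau^4/\eps^{14}$ and $|e^n|+\eps^2|\dot e^n|\lesssim\tau^2/\eps^6$). Your closing caveat is also the honest reading of that technique: the induction on $|y^n|\le C_0+1$ closes either through the smallness $\tau\lesssim\eps^3$ (matching the $\eps$-scalability the authors attribute to FD schemes) or, for CNFD, through the conserved discrete energy $E^n$, exactly as you indicate.
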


\begin{proof}
  The proof  proceeds in analogous lines as the technique
  used in \cite[Theorem 2 and 5]{Dong}, and we omit the details here for brevity.\qed
\end{proof}

\begin{theorem}[Error bound of EXFD]\label{tm5}
For the EXFD (\ref{EXFD}), under the assumption
(\ref{assumption2}), there exists a constant $\tau_0>0$
independent of $\eps$ and $n$, such that for any $0<\eps\leq 1$,
when $0 <\tau\leq\tau_0$ satisfying $\tau\lesssim\eps^2$,
  \begin{equation} \label{erexfd}
    |e^n|+\eps^2|\dot{e}^n|\lesssim\frac{\tau^2}{\eps^6},\qquad 0\leq n\leq\frac{T}{\tau}.
  \end{equation}
\end{theorem}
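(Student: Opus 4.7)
The plan is to follow the standard consistency--plus--stability route, running in parallel with the argument that established Theorems \ref{tm4}, and imported essentially from \cite[Theorems 2 and 5]{Dong}, but paying explicit attention to the CFL regime $\tau\lesssim\eps^2$ that the explicit leap-frog requires and to the bookkeeping of negative powers of $\eps$.

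First I would compute the local truncation error. Plugging the exact solution into the EXFD scheme (\ref{EXFD}) and using that $y(t)$ solves (\ref{WODE}), one obtains
\begin{equation*}
\eta^n \;:=\; \eps^2\delta_t^2 y(t_n)+\Bigl(\alpha+\tfrac{1}{\eps^2}\Bigr)y(t_n)+g(|y(t_n)|^2)y(t_n) \;=\; \eps^2\bigl[\delta_t^2 y(t_n)-\ddot y(t_n)\bigr].
\end{equation*}
A fourth-order Taylor expansion together with the regularity assumption (\ref{assumption2}), which gives $\|y^{(4)}\|_{L^\infty}\lesssim\eps^{-8}$, yields the crucial bound $|\eta^n|\lesssim \eps^2\cdot\tau^2\cdot\eps^{-8}=\tau^2/\eps^6$. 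The first step $y^1$ is produced by the EWI-D formula (\ref{inita123}), and an independent one-step expansion shows that $|e^1|+\eps^2|\dot e^1|$ is at worst $O(\tau^2/\eps^4)$, hence subsumed by the target bound.

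Next I would derive and analyze the error equation. Subtracting the exact and discrete equations gives
\begin{equation*}
\eps^2\delta_t^2 e^n+\Bigl(\alpha+\tfrac{1}{\eps^2}\Bigr)e^n+\bigl[g(|y(t_n)|^2)y(t_n)-g(|y^n|^2)y^n\bigr]=\eta^n.
\end{equation*}
I would run an induction on $n$ with the twin hypotheses $|e^n|+\eps^2|\dot e^n|\le C\tau^2/\eps^6$ and $|y^n|\le C_0+1$. The second keeps the nonlinearity locally Lipschitz, producing a term bounded by $C|e^n|$; it will close by a standard smallness/bootstrap argument (or by truncating the nonlinearity outside a ball, as in \cite{Dong}) once $\tau_0$ is chosen sufficiently small relative to the Lipschitz constant and $C_0$. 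The energy estimate itself comes from testing the error equation against $\overline{e^{n+1}-e^{n-1}}=2\tau\,\overline{\delta_t e^n}$ and taking real parts. The $\eps^2\delta_t^2 e^n$ piece telescopes via the identity
\begin{equation*}
2\operatorname{Re}\bigl[(e^{n+1}-2e^n+e^{n-1})\overline{(e^{n+1}-e^{n-1})}\bigr]=\tau^2\bigl[|\delta_t^+ e^n|^2-|\delta_t^+ e^{n-1}|^2\bigr] \cdot 2,
\end{equation*}
while the stiff linear term is organized by
\begin{equation*}
2\operatorname{Re}\bigl[e^n\overline{(e^{n+1}-e^{n-1})}\bigr]=\bigl(|e^{n+1}|^2+|e^n|^2\bigr)-\bigl(|e^n|^2+|e^{n-1}|^2\bigr)-\tau^2\bigl(|\delta_t^+e^n|^2-|\delta_t^+ e^{n-1}|^2\bigr).
\end{equation*}
This motivates the modified discrete energy
\begin{equation*}
\mcE^n:=\eps^2|\delta_t^+ e^n|^2+\Bigl(\alpha+\tfrac{1}{\eps^2}\Bigr)\operatorname{Re}\bigl(\overline{e^{n+1}}e^n\bigr)-\tfrac{\tau^2}{2}\Bigl(\alpha+\tfrac{1}{\eps^2}\Bigr)|\delta_t^+ e^n|^2,
\end{equation*}
whose correction term is exactly the one produced by the algebraic identity above.

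The decisive point is the CFL condition $\tau\lesssim\eps^2$: it makes $\tau^2(\alpha+1/\eps^2)\lesssim 1$, so the correction term is absorbed by the leading $\eps^2|\delta_t^+ e^n|^2$, and $\mcE^n$ is equivalent (with constants independent of $\eps$) to $\eps^2|\delta_t^+ e^n|^2+\eps^{-2}(|e^{n+1}|^2+|e^n|^2)$. A telescoped inequality of the form $\mcE^n-\mcE^{n-1}\lesssim\tau(\mcE^n+\mcE^{n-1})+\tau|\eta^n|^2/\eps^2+\tau|e^n|^2/\eps^2$ then follows, the $1/\eps^2$ weighting matching that of the equivalent energy. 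Discrete Gronwall and the consistency bound $|\eta^n|\lesssim\tau^2/\eps^6$ give $\sqrt{\mcE^n}\lesssim\tau^2/\eps^7$, which translates into $|e^n|\lesssim\tau^2/\eps^6$ (the extra $\eps$ factor coming from the $\eps^{-2}$ weight on $|e^n|^2$), and similarly for $\eps^2|\delta_t^+ e^n|$, which is the discrete analogue of $\eps^2|\dot e^n|$. Finally the induction hypothesis $|y^n|\le C_0+1$ closes because choosing $\tau_0$ small enough forces $|e^n|$ to stay within the Lipschitz ball on $[0,T]$; this is where restricting to the CFL regime and to $\tau\le\tau_0$ is genuinely used.

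The main obstacle, and the only delicate part, is the $\eps$-tracking inside the energy inequality: the discrete energy must be chosen so that both the telescoping term $\eps^2|\delta_t^+ e^n|^2$ and the stiff potential $(\alpha+1/\eps^2)|e^n|^2$ appear with compatible weights, and the CFL bound must be just strong enough to kill the $\tau^2(\alpha+1/\eps^2)$ remainder without consuming additional powers of $\eps$. Any slackness here degrades $\tau^2/\eps^6$ into $\tau^2/\eps^8$ or worse; getting exactly $\eps^{-6}$ is what pins down the form of the modified energy $\mcE^n$.
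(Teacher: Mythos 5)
Your proposal is correct and follows essentially the same route as the paper, which simply defers to the consistency-plus-energy-stability argument of \cite[Theorem 3]{Dong}: the truncation error $|\eta^n|\lesssim\eps^2\tau^2\|y^{(4)}\|_{L^\infty}\lesssim\tau^2/\eps^6$, a modified discrete energy whose $\tau^2(\alpha+1/\eps^2)$ correction is absorbed by $\eps^2|\delta_t^+e^n|^2$ precisely because of the CFL restriction $\tau\lesssim\eps^2$, a discrete Gronwall inequality, and induction to keep $|y^n|\le C_0+1$ so the nonlinearity stays Lipschitz. Two bookkeeping points should be fixed in a final write-up: since $\mathrm{Re}(\overline{e^{n+1}}e^n)$ already equals $\frac{1}{2}(|e^{n+1}|^2+|e^n|^2)-\frac{\tau^2}{2}|\delta_t^+e^n|^2$, your additional explicit term $-\frac{\tau^2}{2}(\alpha+\frac{1}{\eps^2})|\delta_t^+e^n|^2$ double-counts the correction (harmless under the CFL condition, but the exactly telescoping energy carries only one such term); and the first-step error must be recorded as the local bound $|e^1|\lesssim\tau^3/\eps^4$ rather than $\tau^2/\eps^4$, since the Gronwall step needs $\mcE^0\sim\eps^2|e^1|^2/\tau^2+\eps^{-2}|e^1|^2\lesssim\tau^4/\eps^{14}$, which fails for $\tau\ll\eps^4$ if only $|e^1|\lesssim\tau^2/\eps^4$ is assumed.
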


\begin{proof}
  The proof  proceeds in analogous lines as the technique used in
  \cite[Theorem 3]{Dong} and  the details are omitted here for brevity.\qed
\end{proof}

\section{Numerical comparison results}
\label{sec:6}

In this section, we present numerical comparison results between
the proposed MTIs including MTI-FA and MTI-F, EWIs including EWI-G, EWI-D, EWI-F1 and EWI-F2,
and classical finite difference integrators including CNFD, SIFD and EXFD.
We will compare their accuracy for fixed $\eps=O(1)$ and their meshing strategy
(or $\eps$-resolution) in the parameter regime when $0<\eps\ll 1$.
To quantify the convergence, we introduce two error functions:
\begin{equation}
  \fe^{\eps,\tau}(T):=\left|y(T)-y^{M}\right|,\quad
  \fe^{\tau}_{\infty}(T):=\max_{\eps}\left\{e^{\eps,\tau}(T)\right\},
\end{equation}
where $T=t_M$ with $t_M=M\tau$.

\subsection{Results for power nonlinearity}

The nonlinearity in the  problem (\ref{WODE}) is taken as the pure power nonlinearity (\ref{power})
with coefficients and initial conditions chosen as
\begin{equation}\label{test_eg}
\alpha=2,\quad g\left(|y|^2\right)=|y|^2,\quad \phi_1=1,\quad \phi_2=1.
\end{equation}
Since the analytical solution to this problem is not available, the `exact' solution is
obtained numerically by the  MTI-FA (\ref{IFSW}) with (\ref{FS_r}) under a very small time step $\tau=10^{-6}$.

Table \ref{tbFSWD} lists the errors of the method MTI-FA (\ref{IFSW}) with (\ref{FS_r}) under different
$\eps$ and $\tau$, and Table \ref{tbLSAD} shows similar results
for the method MTI-F (\ref{IFSW}) with (\ref{F s}).
For comparison, Table \ref{tbEWI} shows the errors of EWI-G (\ref{EWIG}) and EWI-D (\ref{EWID}),
Table \ref{tbMI} shows the errors of EWI-F1 (\ref{filter1}) and EWI-F2 (\ref{filter2}),
and Table \ref{tbFD} lists  the errors of CNFD (\ref{CNFD}), SIFD (\ref{SIFD}) and EXFD (\ref{EXFD}).

\begin{table}[t!]\small
\tabcolsep 0pt
\caption{Error analysis of MTI-FA: $\fe^{\eps,\tau}(T)$ and $\fe^{\tau}_{\infty}(T)$ with $T=4$ and convergence rate. Here and after, the convergence rate is obtained by $\frac{1}{2}\log_2\left(\frac{\fe^{\eps,4\tau}(T)}{\fe^{\eps,\tau}(T)}\right)$.}\label{tbFSWD}
\begin{center}
\begin{tabular*}{1\textwidth}{@{\extracolsep{\fill}}llllllll}
\hline
$\fe^{\eps,\tau}(T)$        & $\tau_0=0.2$	&$\tau_0/2^2$	&$\tau_0/2^4$	&$\tau_0/2^6$	 &$\tau_0/2^8$& $\tau_0/2^{10}$&	 $\tau_0/2^{12}$\\
\hline
$\eps_0=0.5$	&5.71E\,--1	&5.28E\,--2	&3.40E\,--3	&2.14E\,--4	&1.34E\,--5	&8.36E\,--7	 &5.21E\,--8\\
rate &---       &1.72	    &1.98	    &2.00	    &2.00	    &2.00	    &2.00\\ \hline
$\eps_0/2^1$    &3.14E\,--1	&5.56E\,--2	&5.70E\,--3	&3.51E\,--4	&2.17E\,--5	&1.35E\,--6	 &8.43E\,--8\\
		  rate  &---        &1.25       &1.64	    &2.01	    &2.01	    &2.00	     &2.00\\ \hline
$\eps_0/2^2$    &1.59E\,--1	&1.53E\,--1	&4.58E\,--2	&2.80E\,--3	&1.56E\,--4	&9.36E\,--6	 &5.79E\,--7\\
		   rate &---        &0.03       &0.87       &2.02	    &2.08	    &2.03	     &2.01\\ \hline
$\eps_0/2^3$    &5.90E\,--3	&1.59E\,--2	&1.25E\,--2	&5.90E\,--3	&2.51E\,--4	&1.16E\,--5	 &6.58E\,--7\\
		   rate &---        &-0.72      &0.17       &0.54       &2.28	    &2.22	     &2.07\\ \hline
$\eps_0/2^4$    &6.70E\,--3	&5.40E\,--3	&8.60E\,--3	&7.30E\,--3	&2.60E\,--3	&1.33E\,--4	 &6.82E\,--6\\
		   rate &---        &0.16       &0.34       &0.12       &0.74       &2.14	     &2.14\\ \hline
$\eps_0/2^5$    &1.10E\,--3	&1.00E\,--3	&6.36E\,--4	&1.30E\,--3	&1.30E\,--3	&2.77E\,--4	 &2.06E\,--5\\
           rate &---	    &0.07		&0.33       &-0.52      &0.00       &1.12        &1.87\\ \hline
$\eps_0/2^6$    &5.96E\,--4	&2.18E\,--5	&5.96E\,--4	&4.10E\,--4	&5.97E\,--4	&5.18E\,--4	 &1.78E\,--4\\
           rate &---		&2.39		&-2.39      &0.27       &-0.27      &0.10        &0.77\\ \hline
$\eps_0/2^8$    &6.51E\,--6	&7.14E\,--6	&1.04E\,--5	&7.48E\,--6	&7.00E\,--6	&3.48E\,--6	 &1.03E\,--5\\	
      rate      &---		&-0.07		&-0.27      &0.24       &0.05       &0.50        &0.78\\	 \hline	
$\eps_0/2^{10}$ &2.32E\,--7	&4.85E\,--7	&2.66E\,--7	&2.79E\,--6	&2.52E\,--6	&5.01E\,--8	 &2.66E\,--6\\
		   rate &---	    &-0.53		&0.43       &-1.70      &0.07       &2.83        &-2.87\\ \hline	
$\eps_0/2^{12}$ &9.87E\,--8	&4.34E\,--8	&6.68E\,--8	&2.33E\,--8	&7.56E\,--8	&1.19E\,--7	 &1.12E\,--7\\	
           rate &---		&0.59		&-0.31      &0.76       &-0.85      &-0.33       &0.04\\	 \hline
$\eps_0/2^{14}$ &3.38E\,--8	&3.77E\,--8	&3.84E\,--8	&3.55E\,--8	&3.49E\,--8	&3.45E\,--8	 &3.43E\,--8\\	
           rate &---		&-0.08		&-0.01      &0.06       &0.01      &0.01      &0.01\\	
\hline \hline
$\fe^{\tau}_{\infty}(T)$ &5.71E\,--1	&1.53E\,--1	&4.58E\,--2	&7.30E\,--3	&2.60E\,--3	 &5.18E\,--4	 &1.78E\,--4\\
             rate &---          &0.95	    &0.87	    &1.32	    &0.74	    &1.16	    &0.77\\
\hline
\end{tabular*}
\end{center}
\end{table}

\begin{table}[t!]
\tabcolsep 0pt
\caption{Error analysis of MTI-F: $\fe^{\eps,\tau}(T)$ and $\fe^{\tau}_{\infty}(T)$ with $T=4$ and convergence rate. }\label{tbLSAD}
\begin{center}
\begin{tabular*}{1\textwidth}{@{\extracolsep{\fill}}llllllll}
\hline
$\fe^{\eps,\tau}(T)$        & $\tau_0=0.2$	&$\tau_0/2^2$	&$\tau_0/2^4$	&$\tau_0/2^6$	 &$\tau_0/2^8$& $\tau_0/2^{10}$&	 $\tau_0/2^{12}$\\
\hline
$\eps_0=0.5$	&5.33E\,--1	&4.05E\,--2	&2.80E\,--3	&1.84E\,--4	&1.16E\,--5	&7.27E\,--7	 &4.53E\,--8\\
	       rate &---        &1.86	    &1.93	    &1.96	    &1.99	    &2.00	     &2.00\\ \hline
$\eps_0/2$      &3.71E\,--1	&5.54E\,--2	&5.60E\,--3	&3.48E\,--4	&2.16E\,--5	&1.34E\,--6	 &8.38E\,--8\\
	       rate &---        &1.37	    &1.65	    &2.00	    &2.01	    &2.00	     &2.00\\ \hline
$\eps_0/2^2$	&2.78E\,--1	&1.60E\,--1	&4.51E\,--2	&2.80E\,--3	&1.55E\,--4	&9.35E\,--6	 &5.79E\,--7\\
	       rate &---        &0.40	    &0.91	    &2.00	    &2.09	    &2.03	     &2.01\\ \hline
$\eps_0/2^3$    &4.95E\,--2	&1.68E\,--2	&1.20E\,--2	&5.80E\,--3	&2.50E\,--4	&1.16E\,--5	 &6.57E\,--7\\
	       rate &---        &0.78	    &0.24	    &0.52	    &2.27	    &2.22	     &2.07\\ \hline
$\eps_0/2^4$	&1.07E\,--1	&9.20E\,--3	&8.70E\,--3	&7.30E\,--3	&2.60E\,--3	&1.33E\,--4	 &6.82E\,--6\\
	      rate  &---        &1.77	    &0.04	    &0.13	    &0.87	    &2.14	     &2.14\\ \hline
$\eps_0/2^5$	&6.15E\,--2	&3.90E\,--3	&8.00E\,--4	&1.40E\,--3	&1.30E\,--3	&2.76E\,--4	 &2.06E\,--5\\
	      rate  &---        &1.99	    &1.14	    &-0.40	    &0.05	    &1.12	     &1.87\\ \hline
$\eps_0/2^6$	&1.14E\,--1	&4.80E\,--3	&8.54E\,--4	&4.24E\,--4	&5.97E\,--4	&5.18E\,--4	 &1.78E\,--4\\
	       rate &---        &2.28	    &1.25	    &0.50	    &-0.25	    &0.10	     &0.77\\ \hline
$\eps_0/2^8$	&2.60E\,--2	&1.40E\,--3	&9.98E\,--5	&1.31E\,--5	&7.36E\,--6	&3.50E\,--6	 &1.03E\,--5\\
	       rate &---        &2.11	    &1.91	    &1.47	    &0.41	    &0.54	     &-0.78\\ \hline
$\eps_0/2^{10}$ &1.23E\,--1	&5.30E\,--3	&2.91E\,--4	&2.04E\,--5	&3.61E\,--6	&1.20E\,--7	 &2.67E\,--6\\
	       rate &---        &2.27	    &2.09	    &1.92	    &1.25	    &2.45	     &-2.24\\ \hline
$\eps_0/2^{12}$ &1.35E\,--1	&6.00E\,--3	&3.41E\,--4	&2.08E\,--5	&1.25E\,--6	&2.36E\,--7	 &1.53E\,--7\\
 	       rate &---        &2.24	    &2.07	    &2.02	    &2.03	    &1.20	     &0.31\\ \hline
$\eps_0/2^{14}$ &4.57E\,--2	&2.30E\,--3	&1.36E\,--4	&8.28E\,--6	&3.27E\,--7	&1.67E\,--7	 &1.97E\,--7\\
           rate &---	    &2.15		&2.04       &2.02       &2.32       &0.49        &-0.12\\	
\hline \hline
$\fe^{\tau}_{\infty}(T)$ &5.33E\,--1	&1.60E\,--1	&4.51E\,--2	&7.30E\,--3	&2.60E\,--3	 &5.18E\,--4 &1.78E\,--4\\
	                rate &---           &0.87	    &0.91	    &1.31	    &0.74	    &1.16	    &0.77\\
\hline
\end{tabular*}
\end{center}
\end{table}

\begin{table}[t!]
\centering
\tabcolsep 0pt
\caption{Error analysis of EWI-G and EWI-D:  $\fe^{\eps,\tau}(T)$ with $T=4$ and convergence rate.}\label{tbEWI}
\begin{tabular*}{1\textwidth}{@{\extracolsep{\fill}}lllllll}
\hline
EWI-G	    &$\tau_0=0.2$ &$\tau_0/2^2$ &$\tau_0/2^4$ &$\tau_0/2^6$ &$\tau_0/2^8$ &$\tau_0/2^{10}$\\
 \hline
$\eps_0=0.5$	 &1.09E\,--2	&1.59E\,--3	&1.01E\,--4	&6.36E\,--6	&3.97E\,--7	&2.44E\,--8\\
	        rate &---           &1.39	    &1.98	    &2.00	    &2.00	    &2.01\\ \hline
$\eps_0/2$       &2.34E+0	    &2.74E\,--2	&1.75E\,--3	&1.10E\,--4	&6.86E\,--6	&4.29E\,--7\\
		    rate &---           &3.21       &1.98	    &2.00	    &2.00	    &2.00\\ \hline
$\eps_0/2^2$     &9.65E\,--1    &9.87E\,--1	&6.50E\,--2	&3.90E\,--3	&2.43E\,--4	&1.52E\,--5\\
			rate &---           &-0.02      &1.96       &2.03	    &2.00	    &2.00\\ \hline
$\eps_0/2^3$     &3.06E\,--1	&1.90E\,--1	&2.68E+0	&2.20E\,--2	&1.18E\,--3	&7.33E\,--5\\
		    rate &---           &0.34       &1.91       &3.46       &2.11	    &2.01\\ \hline
$\eps_0/2^4$     &2.73E\,--1	&3.01E\,--1	&3.05E\,--1	&2.41E+0	&5.40E\,--2	&3.08E\,--3\\
			rate &---           &-0.07      &0.01       &-1.49      &2.74	    &2.07\\ \hline
$\eps_0/2^6$     &2.03E+0	    &2.06E+0	&1.95E+0	&2.09E+0	&2.09E+0	&3.56E\,--1\\
			rate &---           &-0.01      & 0.04      &-0.05      &0.00	    &1.28\\ \hline
$\eps_0/2^8$     &2.66E+0	    &2.66E+0	&2.68E+0	&2.65E+0	&2.71E+0	&2.63E+0\\
		    rate &---           &0.00       &-0.01      &0.01       &-0.02	    &0.02\\
\hline \hline
EWI-D        &$\tau_0=0.2$ &$\tau_0/2^2$ &$\tau_0/2^4$ &$\tau_0/2^6$ &$\tau_0/2^8$ &$\tau_0/2^{10}$\\
\hline
$\eps_0=0.5$     &1.02E\,--1	&5.97E\,--3	&3.66E\,--4	&2.29E\,--5	&1.43E\,--6	&9.05E\,--8\\
	        rate &---           &2.05	    &2.01	    &2.00	    &2.00	    &1.99\\ \hline
$\eps_0/2$       &7.61E\,--2	&3.25E\,--2	&1.52E\,--3	&9.37E\,--5	&5.85E\,--6	&3.66E\,--7\\
		    rate &---           &0.61       &2.21	    &2.01	    &2.00	    &2.00\\ \hline
$\eps_0/2^2$     &5.66E\,--1	&6.04E\,--1	&2.19E\,--2	&1.19E\,--3	&7.36E\,--5	&4.60E\,--6\\
			rate &---           &-0.05      &2.39       &2.10	    &2.01	    &2.00\\ \hline
$\eps_0/2^3$     &1.10E\,--1	&2.83E\,--1	&2.96E\,--1	&2.56E\,--3	&1.41E\,--4	&8.76E\,--6\\
		    rate &---           &0.68       &-0.03      &3.43       &2.09	    &2.00\\ \hline
$\eps_0/2^4$     &3.78E\,--1	&5.85E\,--2	&1.52E\,--1	&1.57E\,--1	&1.16E\,--3	&6.47E\,--5\\
		    rate &---           &1.35       &0.69       &-0.02      & 3.54      &2.08\\ \hline
$\eps_0/2^6$     &1.03E+0	    &2.09E\,--1	&5.92E\,--2	&5.74E\,--3	&1.17E\,--2	&1.20E\,--2\\
			rate &---           &1.15       &0.91       &1.68       &-1.17	    &-0.02\\ \hline
$\eps_0/2^8$     &1.39E\,--1	&1.32E\,--2	&7.17E\,--3	&1.92E\,--3	&6.57E\,--4	&6.80E\,--5\\
			rate &---           &1.70       &0.44       &0.95       &0.77	    &1.64\\
\hline
\end{tabular*}
\end{table}

\begin{table}[t!]
\centering
\tabcolsep 0pt
\caption{Error analysis of EWI-F1 and EWI-F2:  $\fe^{\eps,\tau}(T)$ with $T=4$ and convergence rate.}\label{tbMI}
\begin{tabular*}{1\textwidth}{@{\extracolsep{\fill}}lllllll}
\hline
MI-F1	         &$\tau_0=0.2$  &$\tau_0/2^2$ &$\tau_0/2^4$ &$\tau_0/2^6$ &$\tau_0/2^8$ &$\tau_0/2^{10}$\\
 \hline
$\eps_0=0.5$	 &9.73E\,--1	&6.98E\,--2	  &4.40E\,--3	&2.72E\,--4	  &1.70E\,--5	&1.01E\,--6\\
	rate         &---           &1.90	      &2.00	        &2.00	      &2.00	        &2.04\\ \hline
$\eps_0/2$       &1.70E+0	    &1.30E\,--1	  &4.87E\,--2	&3.20E\,--3	  &2.03E\,--4	&1.26E\,--5\\
	rate         &---           &1.85	      &0.71	        &1.96	      &1.99	        &2.00\\ \hline
$\eps_0/2^2$     &3.49E\,--1	&3.49E\,--1	  &9.81E\,--1	&1.01E\,--1	  &6.40E\,--3	&4.02E\,--4\\
	rate         &---           &0.00	      &-0.74	    &1.64	      &1.99	        &2.00\\ \hline
$\eps_0/2^3$     &2.76E+0	    &2.76E+0	  &2.76E+0	    &1.01E+0	  &3.33E\,--2	&1.90E\,--3\\
	rate         &---           &0.00	      &0.00	        &0.73	      &2.46	        &2.08\\ \hline
$\eps_0/2^4$     &2.26E+0	    &2.26E+0	  &2.26E+0	    &2.26E+0	  &1.35E+0	    &7.63E\,--2\\
	rate         &---           &0.00	      &0.00	        &0.00	      &0.37	        &2.07\\ \hline
$\eps_0/2^6$     &2.04E+0	    &2.04E+0	  &2.04E+0	    &2.04E+0	  &2.04E+0	    &2.04E+0\\
rate             &---           &0.00	      &0.00	        &0.00         &0.00	        &0.00\\ \hline	
$\eps_0/2^8$     &2.66E+0	    &2.66E+0	  &2.66E+0	    &2.66E+0	  &2.66E+0	    &2.66E+0\\
rate             &---           &0.00	      &0.00	        &0.00         &0.00	        &0.00\\
\hline \hline
MI-F2            &$\tau_0=0.2$  &$\tau_0/2^2$ &$\tau_0/2^4$ &$\tau_0/2^6$ &$\tau_0/2^8$  &$\tau_0/2^{10}$\\
\hline
$\eps_0=0.5$     &2.18E\,--1	&1.30E\,--2	  &8.15E\,--4	&5.09E\,--5	  &3.13E\,--6	 &1.44E\,--7\\
	     rate    &---           &2.03	      &2.00	        &2.00	      &2.01 	     &2.21\\ \hline
$\eps_0/2$       &2.00E+0	    &1.54E\,--1	  &1.17E\,--2	&7.41E\,--4	  &4.63E\,--5	 &2.81E\,--6\\
	rate         &---           &1.85	      &1.86	        &1.99	      &2.00	         &2.02\\ \hline
$\eps_0/2^2$     &2.12E\,--1	&4.99E\,--1	  &3.68E\,--1	&2.48E\,--2	  &1.60E\,--3	 &9.66E\,--5\\
	rate         &---           &-0.62	      &0.22 	    &1.95	      &2.00	         &2.02\\ \hline
$\eps_0/2^3$     &2.77E+0	    &2.77E+0	  &2.75E+0	    &1.74E\,--1	  &7.50E\,--3	 &4.55E\,--4\\
	rate         &---           &0.00	      &0.00	        &1.99	      &2.26	         &2.02\\ \hline
$\eps_0/2^4$     &2.25E+0	    &2.30E+0	  &2.30E+0	    &2.21E+0	  &3.32E\,--1	 &1.86E\,--2\\
	rate         &---           &-0.01	      &0.00	        &0.03	      &1.37	         &2.08\\ \hline
$\eps_0/2^6$     &2.04E+0	    &2.04E+0	  &2.03E+0	    &2.08E+0	  &2.09E+0	     &1.99E+0\\
rate             &---           &0.00	      &0.00         &-0.01	      &0.00	         &0.03\\ \hline
$\eps_0/2^8$     &2.66E+0	    &2.66E+0	  &2.66E+0	    &2.66E+0	  &2.67E+0	     &2.63E+0\\
rate             &---           &0.00	      &0.00         &0.00         &0.00          &0.01\\
\hline
\end{tabular*}
\end{table}

\begin{table}[t!]
\centering
\tabcolsep 0pt
\caption{Error analysis of CNFD and SIFD:  $\fe^{\eps,\tau}(T)$ with $T=4$ and convergence rate.}\label{tbFD}
\begin{tabular*}{1\textwidth}{@{\extracolsep{\fill}}lllllll}
\hline
CNFD        &$\tau_0=0.2$   &$\tau_0/2^2$  &$\tau_0/2^4$  &$\tau_0/2^6$ &$\tau_0/2^8$ &$\tau_0/2^{10}$\\
 \hline
$\eps_0=0.5$     &3.24E\,--1	  &4.49E\,--1	   &2.75E\,--2	&1.71E\,--3	&1.07E\,--4	&6.69E\,--6\\
	        rate &---             &-0.24	       &2.01	    &2.00	    &2.00	    &2.00\\ \hline
$\eps_0/2$       &1.75E+0	      &2.42E+0	       &1.90E\,--1	&3.41E\,--2	&2.21E\,--3	&1.38E\,--4\\
			rate &---             &-0.23           &1.84        &1.24	    &1.97	    &2.00\\ \hline
$\eps_0/2^2$     &1.05E+0	      &1.50E+0	       &5.02E\,--1	&3.54E\,--1	&1.94E\,--1	&1.24E\,--2\\
			rate &---	          &-0.26           &0.79        &0.25       &0.43       &1.98\\ \hline
$\eps_0/2^3$     &3.78E\,--1	  &1.78E+0	       &3.71E\,--1  &2.69E+0	&2.60E+0	&3.93E\,--1\\
            rate &---             &1.11            &1.13        &1.43       &0.02       &1.36\\ \hline
$\eps_0/2^4$     &6.49E\,--2	  &1.51E\,--1	   &1.05E+0  	&7.87E\,--1	&5.36E\,--2	&2.48E+0\\
            rate &---             &0.61            &-1.40       &0.21       &1.94       &-2.76\\ \hline
$\eps_0/2^6$     &1.95E+0	      &1.95E+0	       &1.97E+0	    &3.55E\,--1	&2.46E+0	&1.25E+0\\
            rate &---             &0.00            &-0.01       &1.24       &-1.40      &0.49\\ \hline
$\eps_0/2^8$     &3.63E\,--1	  &3.64E\,--1	   &3.64E\,--1	&3.63E\,--1	&5.75E\,--2	&2.49E+0\\
            rate &---             &0.00            &0.00        &0.00       &1.33       &-2.72\\  \hline \hline
SIFD        &$\tau_0=0.2$   &$\tau_0/2^2$  &$\tau_0/2^4$  &$\tau_0/2^6$ &$\tau_0/2^8$ &$\tau_0/2^{10}$\\ \hline
$\eps_0=0.5$	 &7.61E\,--1	  &2.88E\,--1	   &1.76E\,--2	&1.09E\,--3	&6.83E\,--5	 &4.27E\,--6\\
	        rate &---             &0.70	           &2.02	    &2.00	    &2.00	    &2.00\\ \hline
$\eps_0/2$       &2.32E\,--1	  &1.25E+0	       &2.13E\,--1	&2.82E\,--2	&1.82E\,--3	&1.14E\,--4\\
			rate &---             &-1.21           &1.28        &1.46	    &1.98	    &2.00\\ \hline
$\eps_0/2^2$     &1.61E+0	      &1.15E+0	       &1.73E+0	    &5.08E\,--1	&1.83E\,--1	&1.17E\,--2\\
		    rate &---             &0.24            &-0.29       &0.88       &0.74       &1.98\\ \hline
$\eps_0/2^3$     &2.42E\,--1	  &6.85E\,--1	   &5.05E\,--1	&2.21E+0	&2.50E+0	&3.85E\,--1\\
            rate &---             &-0.75           &0.22        &-1.06      &-0.09      &1.35\\ \hline
$\eps_0/2^4$     &1.13E\,--1	  &4.44E\,--2	   &1.91E+0   	&3.28E\,--1	&1.58E+0	&2.48E+0\\
            rate &---             &0.67            &-2.71       &1.27       &-1.13      &-0.33\\ \hline
$\eps_0/2^6$     &1.95E+0	      &1.95E+0	       &1.92E+0  	&6.89E\,--1	&2.05E+0	&6.26E\,--1\\
            rate &---             &0.00            &0.01        &0.74       &-0.78      &0.86\\ \hline
$\eps_0/2^8$     &3.63E\,--1	  &3.63E\,--1	   &3.65E\,--1	&3.63E\,--1	&9.42E\,--2	&2.70E+0\\
            rate &---             &0.00            &-0.01       &0.01       &0.97       &-2.42\\
\hline
\end{tabular*}
\end{table}

\begin{table}[t!]
\centering
\tabcolsep 0pt
\caption{Error analysis of EXFD:  $\fe^{\eps,\tau}(T)$ with $T=4$ and convergence rate.}\label{tbEXFD}
\begin{tabular*}{1\textwidth}{@{\extracolsep{\fill}}lllllll}
\hline
EXFD            &$\tau_0=0.2$   &$\tau_0/2^2$  &$\tau_0/2^4$  &$\tau_0/2^6$ &$\tau_0/2^8$ &$\tau_0/2^{10}$\\ \hline
$\eps_0=0.5$	&8.84E\,--1	    &7.52E\,--2	   &4.66E\,--3	  &2.90E\,--4	    &1.81E\,--5	  &1.13E\,--6\\
	       rate &---            &1.78	       &2.01	      &2.00	            &2.00	      &2.00\\ \hline
$\eps_0/2$      &unstable	           &2.51E+0	   &1.15E\,--1	  &6.49E\,--3	    &4.03E\,--4	  &2.51E\,--5\\
	      rate  &---            &---	           &2.22	      &2.07	            &2.01	      &2.00\\ \hline
$\eps_0/2^2$    &unstable	 &unstable	  &1.76E+0	      &6.36E\,--1	    &3.87E\,--2	  &2.41E\,--3\\
           rate &---	        &---       &---	          &0.73  	        &2.02	      &2.00\\ \hline
$\eps_0/2^3$    &unstable	&unstable	 &unstable	    &1.34E+0	        &1.23E+0	  &3.25E\,--2\\
	       rate &---           &---	           &---	          &---	            &0.06	      &2.62\\ \hline
$\eps_0/2^4$    &unstable  &unstable   &unstable   &unstable	 &9.96E\,--1	  &3.37E\,--1\\
           rate &---  &---	 &---	 &---	        &---	          &0.78\\ \hline
$\eps_0/2^6$    &unstable	  &unstable	   &unstable	&unstable	&unstable	&unstable \\
           rate &---	 &---	 &---	 &---	     &---	          &---\\ \hline
$\eps_0/2^8$    &unstable	&unstable	   &unstable	  &unstable	    &unstable	 &unstable\\
           rate &---	  &---	    &---	  &---	         &---	          &---\\
\hline
\end{tabular*}
\end{table}\normalsize

Based on Tables \ref{tbFSWD}-\ref{tbEXFD} and additional results not shown here for brevity, the following observations can be drawn:

\bigskip

1). For any fixed $\eps$ under $0<\eps\le 1$, when $\tau$ is sufficiently small, e.g. $\tau \lesssim \eps^2$,
all the numerical methods are  second-order accurate (cf. each row in the upper triangle of the  tables).
When $\eps=O(1)$, e.g. $\eps=0.5$, the errors are in the same magnitude for
all the numerical methods under fixed $\tau$ (cf. first row in the tables);
on the contrary, when $\eps$ is small, under fixed $\tau$ small enough, the errors in MTI-FA and MTI-F are
several order smaller in magnitude than those in EWI-G, EWI-D, EWI-F1 and EWI-F2, and the errors in EWI-G, EWI-D, EWI-F1 and EWI-F2 are
a few order smaller in magnitude than those in CNFD, SIFD and EXFD (cf. right bottom parts in the upper triangle of the  tables).

2). Both MTI-FA and MTI-F are uniformly accurate for $0<\eps\le 1$ and converge linearly in
$\tau$ (cf. last row in Tables \ref{tbFSWD}\&\ref{tbLSAD}). In addition, for fixed $\tau$, when $0<\eps\ll1$, MTI-FA converges
quadratically in term of $\eps$ (cf. each column in the lower triangle of Table \ref{tbFSWD}); MTI-F is uniformly bounded
(cf. each column in the lower triangle of Table \ref{tbLSAD}). These results confirm our analytical results in (\ref{FSWDbound}), (\ref{FSWDerror2}), (\ref{LSADbound}) and (\ref{LSADerror2}).
EWI-G, EWI-D, EWI-F1, EWI-F2, CNFD, SIFD and EXFD are not uniformly accurate for $0<\eps\le 1$ (cf. each column in Tables \ref{tbEWI}\&\ref{tbMI}). In fact, for fixed $\tau$ small enough, when $\eps$ decreases, the errors for EWI-G, EWI-D, EWI-F1 and EWI-F2 increase in term of $\eps^{-4}$ (cf. last row in Table \ref{tbEWI}),
and resp., for CNFD, SIFD and EXFD in term of $\eps^{-6}$ (cf. last row in Table \ref{tbMI}).
These results confirm our analytical results in (\ref{erewi-g}), (\ref{ercnfd}) and (\ref{erexfd}).

3). In summary, when $\eps=O(1)$, all the methods perform the same in term of accuracy, however,
EXFD is the simplest and cheapest one in term of computational cost. On the contrary, when $0<\eps<1$, especially $0<\eps\ll1$, both MTI-FA and MTI-F perform much better than the other classical methods. In fact, in order
to compute `correct' numerical solution, in the regime of $0<\eps\ll1$, the $\eps$-scalability (or meshing stragety) for MTI-FA and MTI-F is: $\tau=O(1)$ which is independent of $\eps$, where EWI-G, EWI-D, EWI-F1 and EWI-F2 need to choose $\tau=O(\eps^2)$ and CNFD, SIFD and EXFD need to choose $\tau=O(\eps^3)$.

\subsection{Results of MTIs for general gauge invariant nonlinearity}

The nonlinearity and initial conditions in the  problem (\ref{WODE}) are  chosen as
\begin{equation*}
\alpha=3,\qquad f(y)=\sin^2(|y|^2)y,\qquad \phi_1=1,\qquad \phi_2=1.
\end{equation*}
Again, the `exact' solution is
obtained numerically by the  MTI-FA  (\ref{IFSW}) with (\ref{gen FA s}) and (\ref{gen FA e})
 under a very small time step.

Table {\ref{MTI FA gen}} shows the errors of the method MTI-FA
(\ref{IFSW}) with (\ref{gen FA s}) and (\ref{gen FA e}) under different
$\eps$ and $\tau$, and Table \ref{MTI F gen} lists similar results for the method
MTI-F (\ref{IFSW}) with (\ref{gen F s}). The results for EWI-G, EWI-D, EWI-F1, EWI-F2, CNFD, SIFD and EXFD
are similar to the previous subsection and they are omitted here for brevity.

\begin{table}[t!]\small
\centering
\tabcolsep 0pt
\caption{Error analysis of MTI-FA for general nonlinearity:
$\fe^{\eps,\tau}(T)$ with $T=1$. }\label{MTI FA gen}
\begin{center}
\begin{tabular*}{1\textwidth}{@{\extracolsep{\fill}}llllllll}
\hline
$\fe^{\eps,\tau}(T)$& $\tau_0=0.2$	&$\tau_0/2^2$	
&$\tau_0/2^4$	&$\tau_0/2^6$	 &$\tau_0/2^8$ &$\tau_0/2^{10}$ &$\tau_0/2^{12}$\\
\hline
$\eps_0=1$	&1.97E\,--2	&1.22E\,--3	&7.35E\,--5	&4.54E\,--6	&2.83E\,--7	&1.78E\,--8	 &1.25E\,--9\\
$\eps_0/2$      &6.92E\,--3	&1.34E\,--3	&7.42E\,--5	&4.43E\,--6	&2.73E\,--7	&1.71E\,--8	 &1.19E\,--9\\
$\eps_0/2^2$	&1.61E\,--4	&4.01E\,--4	&4.04E\,--4	&2.63E\,--5	&1.66E\,--6	&1.04E\,--7	 &6.53E\,--9\\
$\eps_0/2^3$    &1.21E\,--2	&2.25E\,--3	&5.63E\,--4	&8.47E\,--5	&4.91E\,--6	&3.00E\,--7	 &1.84E\,--8\\
$\eps_0/2^4$	&9.04E\,--3	&9.78E\,--4	&1.68E\,--3	&1.50E\,--3	&1.58E\,--6	&5.97E\,--9	 &2.37E\,--9\\
$\eps_0/2^5$	&9.27E\,--3	&2.50E\,--4	&6.14E\,--6	&1.62E\,--3	&5.86E\,--5	&7.52E\,--6	 &4.87E\,--7\\
$\eps_0/2^6$	&3.96E\,--3	&3.29E\,--4	&8.48E\,--6	&6.34E\,--7	&9.40E\,--4	&1.19E\,--4	 &1.91E\,--6\\
$\eps_0/2^8$	&1.89E\,--3	&2.35E\,--4	&2.90E\,--5	&1.41E\,--7	&8.47E\,--7	&3.70E\,--7	 &5.17E\,--5\\
$\eps_0/2^{10}$ &1.27E\,--2	&8.46E\,--4	&5.46E\,--5	&6.29E\,--6	&1.26E\,--6	&1.27E\,--6	 &1.08E\,--6\\
$\eps_0/2^{12}$ &1.59E\,--4	&1.47E\,--4	&1.13E\,--5	&7.51E\,--7	&3.46E\,--8	&9.93E\,--8	 &3.49E\,--8\\
$\eps_0/2^{14}$ &9.89E\,--3	&5.33E\,--4	&3.18E\,--5	&1.96E\,--6	&1.17E\,--7	&1.72E\,--9	 &4.97E\,--9\\
\hline
\end{tabular*}
\end{center}
\end{table}

\begin{table}[t!]
\centering
\tabcolsep 0pt
\caption{Error analysis of MTI-F for general nonlinearity:
$\fe^{\eps,\tau}(T)$ with $T=1$.}\label{MTI F gen}
\begin{center}
\begin{tabular*}{1\textwidth}{@{\extracolsep{\fill}}llllllll}
\hline
$\fe^{\eps,\tau}(T)$      & $\tau_0=0.2$	&$\tau_0/2^2$	
&$\tau_0/2^4$	&$\tau_0/2^6$	 &$\tau_0/2^8$  &$\tau_0/2^{10}$   & $\tau_0/2^{12}$\\
\hline
$\eps_0=1$	      &5.79E\,--3	        &8.19E\,--4	
&5.28E\,--5	    &3.31E\,--6	    &2.07E\,--7	   &1.31E\,--8	      &9.53E-10\\	
$\eps_0/2$            &7.54E\,--3	        &1.28E\,--3	
&6.87E\,--5	    &3.93E\,--6	    &2.39E\,--7	   &1.50E\,--8	      &1.05E\,--9\\	
$\eps_0/2^2$	      &3.05E\,--2	        &3.58E\,--4	
&3.99E\,--4	    &2.61E\,--5	    &1.65E\,--6	   &1.03E\,--7	      &6.48E\,--9\\
$\eps_0/2^3$          &1.19E\,--2	        &2.81E\,--3	
&4.99E\,--4	    &8.07E\,--5	    &4.67E\,--6	   &2.85E\,--7	      &1.75E\,--8\\
$\eps_0/2^4$	      &8.83E\,--3	        &6.63E\,--4	
&1.43E\,--3	    &1.49E\,--3	    &1.28E\,--6	   &2.40E\,--8	      &3.48E\,--9\\
$\eps_0/2^5$	      &9.52E\,--3      	&3.02E\,--4	
&8.66E\,--5	    &1.54E\,--3	    &5.89E\,--5	   &7.52E\,--6	      &4.87E\,--7\\
$\eps_0/2^6$	      &3.76E\,--3      	&3.55E\,--4	
&4.82E\,--6	    &4.65E\,--6	    &9.35E\,--4	   &1.19E\,--4	      &1.91E\,--6\\
$\eps_0/2^8$	      &1.89E\,--3	        &2.41E\,--4	
&2.87E\,--5	    &2.55E\,--7	    &8.33E\,--7	   &3.91E\,--7	      &5.17E\,--5\\
$\eps_0/2^{10}$       &1.27E\,--2     	&8.46E\,--4	
&5.47E\,--5	    &6.33E\,--6	    &1.25E\,--6	   &1.27E\,--6	      &1.08E\,--6\\
$\eps_0/2^{12}$       &1.59E\,--4	        &1.47E\,--4	
&1.13E\,--5	    &7.51E\,--7	    &3.51E\,--8	   &9.88E\,--8	      &3.53E\,--8\\
$\eps_0/2^{14}$       &9.89E\,--3	        &5.33E\,--4	
&3.17E\,--5	    &1.95E\,--6	    &1.06E\,--7	   &9.43E\,--9	      &1.62E\,--8\\
                      \hline
\end{tabular*}
\end{center}
\end{table}\normalsize

From Tables \ref{MTI FA gen}\&\ref{MTI F gen} and additional results not
shown here for brevity, again we can see that both MTI-FA and MTI-F are
uniformly accurate for $0<\eps\le 1$, especially when $0<\eps\ll 1$.
In addition, the results suggest the following two independent error bounds
for both MTI-FA and MTI-F under a general nonlinearity in (\ref{WODE})
 \[|e^n|+\eps^2|\dot{e}^n|\lesssim\frac{\tau^2}{\eps^2},\qquad
  |e^n|+\eps^2|\dot{e}^n|\lesssim \tau^2+\eps^2,\qquad 0<\tau\leq\tau_0.\]
Rigorous justification for the above observation is still on-going.

\section{Conclusions}
\label{sec:7}

Different numerical methods were either designed or reviewed as well as compared with each other
for solving highly oscillatory second order differential equations
with a dimensionless parameter $0<\eps\le1$ which is inversely proportional to the speed of light,
especially in the nonrelativistic limit regime $0<\eps\ll1$. In this regime,
the solution propagates waves at wavelength $O(\eps^2)$ and amplitude at $O(1)$, which brings significantly numerical
burdens in practical computation.  Based on two types of multiscale
decomposition by either frequency or frequency and amplitude,
two multiscale time integrators (MTIs), e.g. MTI-FA and MTI-F,
were designed for solving the problem
when the nonlinearity is taken as either a pure power nonlinearity or a general
gauge invariant nonlinearity.  Two independent error bounds at $O(\tau^2/\eps^2)$
and $O(\eps^2)$ for $\eps\in(0,1]$ of the two MTIs were rigorously established
when the nonlinearity is taken as a pure power nonlinearity, which
immediately imply that the two MTIs converge uniformly with linear convergence rate
at $O(\tau)$ for  $\eps\in(0,1]$ and optimally with quadratic convergence rate at $O(\tau^2)$
in the regimes when either $\eps=O(1)$ or $0<\eps\le \tau$.
For comparison, classical methods, such as exponential wave integrators (EWIs)
and finite difference (FD) methods, were also presented for solving the problem.
Error bounds for them were given with explicitly dependence on the parameter $\eps$.
Based on these rigorous error estimates, we conclude that, in the regime $0<\eps\ll1$,
the $\eps$-scalability for the two MTIs is $\tau=O(1)$ which is independent of $\eps$,
where it is  at $\tau=O(\eps^2)$ and $\tau=O(\eps^3)$ for EWIs and FD methods, respectively.
Therefore, the proposed MTIs offer compelling advantages over those
classical methods in the regime $0<\eps\ll 1$.  Numerical results
confirmed our analytical error bounds. We remark here that both
MTI-FA and MTI-F and their error estimates can be extended to (\ref{WODE})
when $g(\rho)$ in (\ref{power}) is a polynomial in $\rho$. In the future,
we will extend the two MTIs to solve oscillatory ODEs from
molecular dynamics with high frequency \cite{Grimm,Lubich1,Lubich2,Sanz}
and nonlinear oscillatory dispersive partial differential equations arising from
plasma physics and general relativity \cite{Dong,Masmoudi}.

\appendix

\section{Detailed explicit formulas for the coefficients used in the MTIs}\label{ap: coff}

For the coefficients in (\ref{quad Ik J}) used in MTI-FA, after a detailed computation, we have
\begin{align*}
p_k=&\frac{\eps^2\omega\cos(\omega\tau)+i(2k+1)\sin(\omega\tau)-
\eps^2\omega\fe^{i(2k+1)\tau/\eps^2}}{(2k+1)^2\omega-\eps^4\omega^3},\\
\dot{p}_k=&\frac{i(2k+1)\cos(\omega\tau)-\eps^2\omega\sin(\omega\tau)-
i(2k+1)\fe^{i(2k+1)\tau/\eps^2}}{(2k+1)^2-\eps^4\omega^2},\qquad 1\le k\le p,\\
q_k=&\frac{\eps^2}{\omega\left[\eps^4\omega^2-(2k+1)^2\right]^2}
\biggl[i(4k+2)\eps^2\omega\cos(\omega\tau)-\left(\eps^4\omega^2+(2k+1)^2\right)
\sin(\omega\tau)\\
&+\left(\eps^4\omega^3\tau-(2k+1)^2\omega\tau-i(4k+2)
\eps^2\omega\right)\fe^{i(2k+1)\tau/\eps^2}\biggr],\\
\dot{q}_k=&\frac{1}{\left[\eps^4\omega^2-(2k+1)^2\right]^2}
\biggl[-\left(\eps^6\omega^2+(2k+1)^2\eps^2\right)\cos(\omega\tau)-i(4k+2)\eps^4\omega
\sin(\omega\tau)^{^{}}\\
&+\left(i(2k+1)\tau\eps^4\omega^2-i(2k+1)^3\tau+\eps^6\omega^2+(2k+1)^2\eps^2
\right)\fe^{i(2k+1)\tau/\eps^2}\biggr].
\end{align*}
Similarly, for the coefficients in (\ref{gen F appr}) used in MTI-F, we have
\begin{align*}
&c(\tau)=\frac{\nd_-\fe^{i\tau\nd_+}-\nd_+\fe^{i\tau\nd_-}+\nd_+-
\nd_-}{\eps^2(\nd_--\nd_+)\nd_+\nd_-},\qquad
\dot{c}(\tau)=i\frac{\fe^{i\tau\nd_+}-\fe^{i\tau\nd_-}}{\eps^2(\nd_--\nd_+)},\\
&d(\tau)=i\frac{\nd_-^2\fe^{i\tau\nd_+}-\nd_+^2\fe^{i\tau\nd_-}+i\tau\nd_+\nd_-(\nd_+
-\nd_-)+\nd_+^2-\nd_-^2}{\eps^2(\nd_+-\nd_-)\nd_+^2\nd_-^2},\qquad
\dot{d}(\tau)=c(\tau).
\end{align*}

\section{Proof of Theorem \ref{tm1}}
\label{Ap proof MTI_FA}

In order to proceed with the proof, we introduce the following auxiliary problem
\begin{equation}\label{B1}
\left\{
  \begin{split}
    & \eps^2\ddot{\tilde{y}}^n(s)+\left(\alpha+\frac{1}{\eps^2}\right)\tilde{y}^n(s)
    +g\left(|\tilde{y}^n(s)|^2\right)\tilde{y}^n(s)=0,\qquad s>0,\\
    & \tilde{y}^n(0)= y^n,\qquad\dot{\tilde{y}}^n(0)=\dot{y}^n,\qquad n=0,1,\ldots,
  \end{split}
\right.
\end{equation}
and denote two local errors and an error energy as
\begin{align}\label{etans12}
& \eta^{n}(s):=y(t_{n}+s)-\tilde{y}^n(s),\qquad \dot{\eta}^n(s):=
\dot{y}(t_{n}+s)-\dot{\tilde{y}}^n(s),\qquad s\geq 0,\\
& \xi^{n+1}:=\tilde{y}^n(\tau)-y^{n+1},\qquad \dot{\xi}^{n+1}:=
\dot{\tilde{y}}^n(\tau)-\dot{y}^{n+1},\label{xins12}\\
\label{error_energy}& \mathcal{E}\left(e,\dot{e}\right):=\eps^2\left|\dot{e}\right|^2
+\left(\alpha+\frac{1}{\eps^2}\right)\left|e\right|^2,\qquad \forall\,e,\, \dot{e}\in\bC.
\end{align}
Noticing (\ref{error_fun}) and using the triangle inequality, we have
\begin{align}
  & \eta^n(0)=e^n,\qquad \dot{\eta}^n(0)=\dot{e}^n,\\
  \label{3xin}&\left|e^{n+1}\right|\leq \left|\eta^n(\tau)\right|+\left|\xi^{n+1}\right|,\qquad \left|\dot{e}^{n+1}\right|\leq\left|\dot{\eta}^n(\tau)\right|+\left|\dot{\xi}^{n+1}\right|.
\end{align}

Before we present the detailed proof, we first establish the following lemmas.

\begin{lemma}\label{lm1}
For any $n=0,1,\ldots,$ we have
\begin{equation}
  \mcE\left(e^{n+1},\dot{e}^{n+1}\right)\leq (1+\tau)\mcE\left(\eta^{n}(\tau),\dot{\eta}^{n}(\tau)\right)
  +\left(1+\frac{1}{\tau}\right)\mcE\left(\xi^{n+1},\dot{\xi}^{n+1}\right).
\end{equation}
\end{lemma}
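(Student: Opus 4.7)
The plan is to reduce the lemma to a scalar Young-type inequality applied separately to the two terms comprising the error energy $\mcE$. Since the defining relations (\ref{3xin}) give us the triangle-inequality bounds $|e^{n+1}|\le |\eta^n(\tau)|+|\xi^{n+1}|$ and $|\dot e^{n+1}|\le|\dot\eta^n(\tau)|+|\dot\xi^{n+1}|$, the work essentially consists of squaring these bounds in a way that produces matching $(1+\tau)$ and $(1+1/\tau)$ factors on the two contributions.

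The first step is to record the elementary inequality
\begin{equation*}
(A+B)^2 \le (1+\tau)A^2 + \Bigl(1+\tfrac{1}{\tau}\Bigr)B^2, \qquad \forall\, A,B\ge 0,\ \tau>0,
\end{equation*}
which follows from $2AB \le \tau A^2 + \tau^{-1} B^2$ (Young's inequality with parameter $\tau$). Then I would apply it twice: once with $A=|\eta^n(\tau)|$, $B=|\xi^{n+1}|$ to obtain
\begin{equation*}
|e^{n+1}|^2 \le (1+\tau)|\eta^n(\tau)|^2 + \Bigl(1+\tfrac{1}{\tau}\Bigr)|\xi^{n+1}|^2,
\end{equation*}
and once with $A=|\dot\eta^n(\tau)|$, $B=|\dot\xi^{n+1}|$ to obtain the analogous bound on $|\dot e^{n+1}|^2$.

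Finally I would plug these two scalar bounds into the definition (\ref{error_energy}). Multiplying the first by $(\alpha+1/\eps^2)$ and the second by $\eps^2$ and summing, the weights $(1+\tau)$ and $(1+1/\tau)$ factor out cleanly because they are the same in both lines, giving exactly
\begin{equation*}
\mcE(e^{n+1},\dot e^{n+1}) \le (1+\tau)\mcE(\eta^n(\tau),\dot\eta^n(\tau)) + \Bigl(1+\tfrac{1}{\tau}\Bigr)\mcE(\xi^{n+1},\dot\xi^{n+1}).
\end{equation*}
There is no real obstacle here: the lemma is purely algebraic, with the only minor point being the choice of splitting parameter $\tau$ in Young's inequality, which is dictated by the desired form of the estimate (and, presumably, by its intended use in a discrete Gronwall argument in the main convergence proof, where the factor $(1+\tau)$ will be iterated $O(T/\tau)$ times to yield a bounded multiplicative constant $\fe^T$).
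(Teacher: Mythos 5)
Your proof is correct and is essentially the paper's own argument: the authors likewise combine the triangle-inequality bounds (\ref{3xin}) with Young's inequality (with splitting parameter $\tau$) and the definition (\ref{error_energy}) of $\mcE$. You have simply written out in full the one-line proof given in the paper.
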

\begin{proof}
  Noticing (\ref{error_energy}), (\ref{3xin}), the above inequality follows
  by using the Young inequality.\qed
\end{proof}

Let $C_0$ be given in (\ref{C0}) and define
\begin{equation}\label{tau0}
\tau_1:=\left(2C_0+4\right)^{-1}K_1^{-1},\qquad \mbox{with}\qquad K_1=\left\|g(\cdot)\right\|_{L^\infty\left(0,\left(2C_0+4\right)^2\right)}. \end{equation}

\begin{lemma}\label{lm: reg y}
For the problem (\ref{B1}), if (\ref{FSWDbound}) holds for any fixed $n$
($n=0,1,\ldots,\frac{T}{\tau}-1$),
which will be proved by an induction argument later, then we have
\begin{equation} \label{bndytn}
\left\|\tilde{y}^n\right\|_{L^\infty(0,\tau)}\leq 2C_0+3\lesssim1,
\qquad  0\le \tau\leq\tau_1.
\end{equation}
\end{lemma}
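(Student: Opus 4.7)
My plan is to establish the bound via the variation-of-constants formula for the linear-plus-nonlinear decomposition of \eqref{B1}, combined with a standard continuity (bootstrap) argument that exploits the explicit choice of $\tau_1$ in \eqref{tau0}.

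The starting point is to rewrite the auxiliary problem \eqref{B1} using the same Duhamel formula that underlies \eqref{VCFy}, namely
\begin{equation*}
\tilde{y}^n(s) = \cos(\omega s)\,y^n + \frac{\sin(\omega s)}{\omega}\,\dot{y}^n - \int_0^s \frac{\sin(\omega(s-\theta))}{\eps^2\omega}\,g\bigl(|\tilde{y}^n(\theta)|^2\bigr)\,\tilde{y}^n(\theta)\,d\theta,
\end{equation*}
with $\omega=\sqrt{1+\eps^2\alpha}/\eps^2$ as in \eqref{kappa d}. The essential scaling fact, already exploited throughout the paper, is that $1/\omega \le \eps^2$ and $1/(\eps^2\omega) = 1/\sqrt{1+\eps^2\alpha}\le 1$. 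Consequently, the two linear terms can be controlled by invoking the induction hypothesis \eqref{FSWDbound}: $|\cos(\omega s)\,y^n|\le C_0+1$ and $|\sin(\omega s)\,\dot{y}^n/\omega|\le \eps^2\cdot(C_0+1)/\eps^2 = C_0+1$, giving a uniform contribution of at most $2(C_0+1)$ independent of $\eps$.

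Next I would set up a standard continuity argument. Define $M(s):=\sup_{0\le \sigma\le s}|\tilde{y}^n(\sigma)|$. So long as $M(s)\le 2C_0+4$, the definition of $K_1$ in \eqref{tau0} yields $|g(|\tilde{y}^n(\theta)|^2)|\le K_1$ for $\theta\in[0,s]$, hence the integral term in the Duhamel formula is bounded by $s K_1 M(s)$. Suppose, for contradiction, that there exists a first time $s_1\in(0,\tau]$ at which $|\tilde{y}^n(s_1)| = 2C_0+3$; by continuity (and since $|\tilde{y}^n(0)|=|y^n|\le C_0+1 < 2C_0+3$) we have $M(s_1)\le 2C_0+3 < 2C_0+4$, so the estimate above is valid on $[0,s_1]$. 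Using $\tau\le \tau_1 = 1/[(2C_0+4)K_1]$, this yields
\begin{equation*}
2C_0+3 \;=\; |\tilde{y}^n(s_1)| \;\le\; 2(C_0+1) + s_1 K_1 (2C_0+3) \;\le\; 2C_0+2 + \frac{2C_0+3}{2C_0+4} \;<\; 2C_0+3,
\end{equation*}
which is a contradiction. Hence no such $s_1$ exists and $\|\tilde{y}^n\|_{L^\infty(0,\tau)}\le 2C_0+3$ for all $\tau\le\tau_1$.

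I do not expect any serious obstacle here: the main subtlety is simply to keep the $\eps$-dependence transparent, which is resolved by the cancellation $1/\omega = O(\eps^2)$ matching the size of $\dot{y}^n$. The constant $2C_0+3$ (rather than, say, $2C_0+4$) and the specific choice of $\tau_1$ in \eqref{tau0} are calibrated precisely so that the bootstrap closes strictly, as shown in the contradiction above. Since the bound is independent of $\eps$, the conclusion $\|\tilde{y}^n\|_{L^\infty(0,\tau)}\lesssim 1$ follows immediately.
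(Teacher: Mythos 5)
Your proposal is correct and follows essentially the same route as the paper: the paper writes down the identical variation-of-constants formula \eqref{yns843} and then simply invokes ``the bootstrap principle as in \cite{Tao}'' together with \eqref{FSWDbound}, which is precisely the continuity argument you carry out explicitly. Your version usefully fills in the details the paper omits --- in particular the cancellation $|\dot y^n|/\omega \le \eps^2(C_0+1)/\eps^2$ and the verification that $\tau_1=(2C_0+4)^{-1}K_1^{-1}$ makes the contradiction close strictly.
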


\begin{proof}
By using the variation-of-constant formula to (\ref{B1}), we get for $0\le s\le \tau_1$
\begin{equation} \label{yns843}
\displaystyle
\tilde{y}^n(s)=\cos(\omega s)y^n+\frac{\sin(\omega s)}{\omega}\dot{y}^n-
 \int_0^s\frac{\sin(\omega(s-\theta))}{\eps^2\omega}g(|\tilde{y}^n(\theta)|^2)\tilde{y}^n(\theta)d\theta.
\end{equation}
Then the rest of the proof proceeds in the analogous lines
as in \cite{Tao} for nonlinear dispersive and wave equations by using
 the bootstrap principle and noticing (\ref{FSWDbound}). \qed
\end{proof}

\begin{lemma}\label{lm:stab_AP}
Under the same assumption as in Lemma \ref{lm: reg y}, we have
for $n=0,1,\ldots,\frac{T}{\tau}-1$,
\begin{equation}\label{A_pre1}
  \mcE\left(\eta^{n}(\tau),\dot{\eta}^{n}(\tau)\right)
  -\mcE\left(e^{n},\dot{e}^{n}\right)\lesssim \tau \mcE\left(e^{n},\dot{e}^{n}\right), \qquad
  0\leq\tau\leq \tau_1.
\end{equation}
\end{lemma}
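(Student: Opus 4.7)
The plan is to derive the evolution equation for $\eta^n(s)$, convert the standard energy estimate for second-order wave-type equations into an $\mcE$-energy inequality, and then close with a short Gronwall argument over $s\in[0,\tau]$.

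First I would subtract the auxiliary equation (\ref{B1}) from the exact equation (\ref{WODE}) to obtain, for $0\le s\le \tau$,
\begin{equation*}
\eps^2\ddot{\eta}^n(s)+\left(\alpha+\tfrac{1}{\eps^2}\right)\eta^n(s)
+\left[g\left(|y(t_n+s)|^2\right)y(t_n+s)-g\left(|\tilde{y}^n(s)|^2\right)\tilde{y}^n(s)\right]=0,
\end{equation*}
with $\eta^n(0)=e^n$ and $\dot{\eta}^n(0)=\dot{e}^n$. Multiplying by $\overline{\dot{\eta}^n}(s)$, taking twice the real part, and using the definition (\ref{error_energy}), I get the identity
\begin{equation*}
\frac{d}{ds}\mcE\left(\eta^n(s),\dot{\eta}^n(s)\right)
=-2\,\mathrm{Re}\left\{\left[g(|y|^2)y-g(|\tilde{y}^n|^2)\tilde{y}^n\right]\overline{\dot{\eta}^n}(s)\right\}.
\end{equation*}

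Next I would exploit the uniform bounds already available: $\|y\|_{L^\infty(0,T)}\le C_0$ from (\ref{C0}) and $\|\tilde{y}^n\|_{L^\infty(0,\tau)}\le 2C_0+3$ from Lemma \ref{lm: reg y} (applicable by the induction hypothesis underlying (\ref{FSWDbound})). Since $g$ is smooth, the map $w\mapsto g(|w|^2)w$ is Lipschitz on the ball of radius $2C_0+3$, which gives
\begin{equation*}
\left|g(|y|^2)y-g(|\tilde{y}^n|^2)\tilde{y}^n\right|\lesssim |\eta^n(s)|.
\end{equation*}
Then by Cauchy--Schwarz and the elementary inequality $|\eta^n||\dot{\eta}^n|\le \tfrac{\eps^2}{2}|\dot{\eta}^n|^2+\tfrac{1}{2\eps^2}|\eta^n|^2$, combined with $\tfrac{1}{\eps^2}|\eta^n|^2\le \mcE(\eta^n,\dot{\eta}^n)$ and $\eps^2|\dot{\eta}^n|^2\le \mcE(\eta^n,\dot{\eta}^n)$, I obtain
\begin{equation*}
\left|\frac{d}{ds}\mcE\left(\eta^n(s),\dot{\eta}^n(s)\right)\right|
\lesssim \mcE\left(\eta^n(s),\dot{\eta}^n(s)\right), \qquad 0\le s\le \tau_1.
\end{equation*}

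Finally, Gronwall's inequality yields $\mcE(\eta^n(\tau),\dot{\eta}^n(\tau))\le e^{C\tau}\mcE(e^n,\dot{e}^n)$, and since $0\le \tau\le \tau_1$ with $\tau_1$ independent of $\eps$ and $n$, the elementary bound $e^{C\tau}-1\lesssim \tau$ produces exactly (\ref{A_pre1}). The only subtle point is ensuring that the Lipschitz constant of the nonlinearity and the Gronwall constant are uniform in $\eps$; this is guaranteed by the $\eps$-independent $L^\infty$ bounds on $y$ and $\tilde{y}^n$ together with the choice of $\tau_1$ in (\ref{tau0}). No bound of the form $|\dot y|\lesssim \eps^{-2}$ is needed in the estimate of $\tfrac{d}{ds}\mcE$, which is the reason the energy-form $\mcE$ absorbs the singular factor $1/\eps^2$ cleanly.
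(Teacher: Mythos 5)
Your proof is correct, and it reaches (\ref{A_pre1}) by a genuinely different (and more elementary) route than the paper. The paper does not differentiate the energy: it writes $\eta^n(\tau)$ and $\dot{\eta}^n(\tau)$ via the variation-of-constants formula, observes that the free trigonometric propagator leaves $\mcE$ exactly invariant (so the homogeneous part contributes precisely $\mcE(e^n,\dot{e}^n)$), splits off the Duhamel term with Young's inequality at the cost of factors $(1+\tau)$ and $(1+\tfrac{1}{\tau})$, bounds the source by the same Lipschitz estimate $|\tilde{g}^n(s)|\lesssim|\eta^n(s)|$, applies H\"older to absorb the $(1+\tfrac1\tau)(\alpha+\tfrac{2}{\eps^2})$ prefactor into $\int_0^\tau\mcE\,ds$, and closes with an integral Gronwall inequality. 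Your differential energy identity $\tfrac{d}{ds}\mcE=-2\,\mathrm{Re}\{\tilde{g}^n\overline{\dot{\eta}^n}\}$ exploits the same exact cancellation of the linear terms but in infinitesimal form, and the weighted Young step $|\eta^n||\dot{\eta}^n|\le\tfrac{\eps^2}{2}|\dot{\eta}^n|^2+\tfrac{1}{2\eps^2}|\eta^n|^2\le\mcE$ is exactly the mechanism by which $\mcE$ absorbs the $1/\eps^2$ singularity, as you note. What your version buys is brevity and the avoidance of the $\tau$-dependent Young splitting; what the paper's integral formulation buys is a template that transfers directly to the discrete local-error estimates (Lemma \ref{lm:local_error_F}), where one cannot differentiate and must work with the quadrature of the Duhamel integral. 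Both arguments rely identically on the $\eps$-uniform $L^\infty$ bounds on $y$ and $\tilde{y}^n$ from (\ref{C0}) and Lemma \ref{lm: reg y}, which you correctly flag as the only place the induction hypothesis enters.
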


\begin{proof}
\label{Ap2}
Subtracting  (\ref{B1}) from (\ref{WODE}) and noticing (\ref{etans12}), we obtain
\begin{equation}
\left\{
  \begin{split}
    & \eps^2\ddot{\eta}^n(s)+\left(\alpha+\frac{1}{\eps^2}\right)\eta^n(s)
    +\tilde{g}^n(s)=0,\qquad s>0,\\[0.5em]
    & \eta^n(0)= e^n,\qquad\dot{\eta}^n(0)=\dot{e}^n,\qquad n=0,1,\ldots;
  \end{split}
\right.
\end{equation}
where
\begin{equation}
\tilde{g}^n(s)= g\left(|y(t_n+s)|^2\right)y(t_n+s)-g\left(|\tilde{y}^n(s)|^2\right)\tilde{y}^n(s).
\end{equation}
By using the variation-of-constant formula and the triangle inequality, we get
\begin{equation}\label{etantau31}
\left\{
\begin{split}
 &\left|\eta^n(\tau)\right|\leq\left|\cos(\omega\tau)e^n+\frac{\sin(\omega\tau)}{\omega}\dot{e}^n\right|+
\int_0^\tau \left|\frac{\sin(\omega(\tau-s))}{\eps^2\omega}\tilde{g}^n(s) \right| ds,\\
  &\left|\dot{\eta}^n(\tau)\right|\leq\left|-\omega\sin(\omega\tau)e^n+\cos(\omega\tau)\dot{e}^n\right|+
 \int_0^\tau \left|\frac{\cos(\omega(\tau-s))}{\eps^2}\tilde{g}^n(s) \right| ds.
 \end{split}\right.
\end{equation}
From (\ref{error_energy}) with $e=e^n$ and $\dot{e}=\dot{e}^n$, we have
\begin{equation*}
\mcE\left(e^n,\dot{e}^n\right)=\eps^2\left|-\omega\sin(\omega\tau)e^n+\cos(\omega\tau)\dot{e}^n\right|^2
+\left(\alpha+\frac{1}{\eps^2}\right)\left|\cos(\omega\tau)e^n+\frac{\sin(\omega\tau)}{\omega}\dot{e}^n\right|^2.
\end{equation*}
From (\ref{etantau31}) and (\ref{error_energy}), noticing the above equality
and using  the Young inequality, we get
\begin{equation}\label{plm3-1}
\mcE\left(\eta^n(\tau),\dot{\eta}^n(\tau)\right)-(1+\tau)\mcE\left(e^n,\dot{e}^n\right)
\leq \left(1+\frac{1}{\tau}\right)\left(\alpha+\frac{2}{\eps^2}\right)\left(\int_0^\tau \left|\tilde{g}^n(s)\right|ds\right)^2.
\end{equation}
Noticing (\ref{C0}) and (\ref{bndytn}), we have
\begin{equation}\label{gntd124}
  \left|\tilde{g}^n(s)\right|\lesssim \left|\eta^n(s)\right|,\qquad 0\le s\le \tau\le \tau_1.
\end{equation}
Plugging (\ref{gntd124}) into (\ref{plm3-1}), noticing (\ref{error_energy}) and
using the H\"{o}lder inequality, we get
\begin{eqnarray}
\mcE\left(\eta^n(\tau),\dot{\eta}^n(\tau)\right)-(1+\tau)\mcE\left(e^n,\dot{e}^n
\right)&\leq&\left(1+\frac{1}{\tau}\right)\left(\alpha+\frac{2}{\eps^2}\right)
\tau\int_0^\tau \left|\eta^n(s)\right|^2ds\nonumber\\
&\lesssim&\int_0^\tau \mcE\left(\eta^n(s),\dot{\eta}^n(s)\right)ds,
\qquad 0\le \tau\le \tau_1. \qquad
\end{eqnarray}
Then the estimate (\ref{A_pre1}) can be obtained by applying the Gronwall inequality.
\qed
\end{proof}

\begin{lemma}(A prior estimate of MDFA)\label{lm:reg_F}
Let ${z}_\pm^n(s)$ and $r^n(s)$ be the solution of the MDFA
(\ref{pLSADz31}) under the initial conditions (\ref{FSW-i51})
with $z_\pm^n(0)=z_\pm^{(0)}$ and $\dot{r}^n(0)=\dot{r}^{(0)}$
defined in (\ref{F_ini1}). Under the same assumption
as in Lemma \ref{lm: reg y}, there exists a constant $\tau_2>0$,
independent of $\eps$ and  $n$, such that for
$0<\tau\leq\tau_2$ and all $n=0,1,\ldots,\frac{T}{\tau}-1$,
  \begin{equation}
    \label{lm2_1}\left\|\frac{d^m}{dt^m} {z}_\pm^n\right\|_{L^\infty(0,\tau)}
    \lesssim 1,\ m=0,1,2,3;\qquad
    \eps^{2l-2}\left\|\frac{d^l}{dt^l} {r}^n\right\|_{L^\infty(0,\tau)}
    \lesssim1,\ l=0,1,2.
  \end{equation}
\end{lemma}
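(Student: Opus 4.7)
The plan is to split the argument into a (comparatively easy) estimate for $z_\pm^n$ and a (more delicate) estimate for $r^n$, and then to obtain the highest-order derivative in each case by simply reading it off from the ODE.

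First I would control $z_\pm^n$. The initial data $z_\pm^{(0)}$ in (\ref{F_ini1}) is a linear combination of $y^n$ and $\eps^2\dot{y}^n$, both uniformly bounded by $C_0+1$ under the induction hypothesis (\ref{FSWDbound}), so $|z_\pm^{(0)}|\lesssim 1$. Multiplying the first equation of (\ref{pLSADz31}) by $\overline{z_\pm^n}$ and subtracting its complex conjugate (exactly as in the derivation of (\ref{abs_z_c})) yields $|z_\pm^n(s)|\equiv|z_\pm^{(0)}|$, hence $\|z_\pm^n\|_{L^\infty(0,\tau)}\lesssim 1$. Smoothness of $g_\pm$ on the corresponding bounded set then gives $\|\dot z_\pm^n\|_{L^\infty}\lesssim 1$ directly from the ODE, and successive differentiation of (\ref{pLSADz31}) in $s$ produces the analogous bounds for $\ddot z_\pm^n$ and $\dddot z_\pm^n$ with no hidden $\eps$-dependence.

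The harder part is $r^n$, for which the correct scaling is $\|r^n\|_{L^\infty}\lesssim\eps^2$, $\|\dot r^n\|_{L^\infty}\lesssim 1$, $\|\ddot r^n\|_{L^\infty}\lesssim\eps^{-2}$. I would start from the variation-of-constants formula (\ref{r gr}). The homogeneous term is $\sin(\omega s)\dot r^n(0)/\omega$ and is $O(\eps^2)$ because $1/\omega\lesssim\eps^2$ and $\dot r^n(0)=-\dot z_+^n(0)-\overline{\dot z_-^n}(0)=O(1)$. For the Duhamel integral I split the forcing according to (\ref{gr def})--(\ref{h def}) into three pieces: the explicitly oscillatory terms $g_{k,\pm}^n(\theta)\fe^{\pm i(2k+1)\theta/\eps^2}$ for $k=1,\ldots,p$, the remainder $h^n(\theta)$ which depends on $r^n$, and the forcing $\eps^2 u^n(\theta)$. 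The $\eps^2 u^n$ piece contributes $O(\eps^2)$ trivially because the kernel $\sin(\omega(s-\theta))/(\eps^2\omega)$ is $O(1)$. For the oscillatory pieces I integrate by parts against this kernel; the combined phase is $\pm\omega\pm(2k+1)/\eps^2$, which satisfies $|\pm\omega\pm(2k+1)/\eps^2|\gtrsim 1/\eps^2$ uniformly in $\eps\in(0,1]$ and $k\ge 1$ because $\omega=1/\eps^2+O(1)$ and $2k+1\ge 3$, so each integration by parts gains a factor $\eps^2$, and the bounds already obtained on $z_\pm^n$ and $\dot z_\pm^n$ make the endpoint and remainder terms $O(\eps^2)$. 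Finally, $|h^n(\theta)|\lesssim|r^n(\theta)|$ by the mean-value estimate (using smoothness of $g$ on a bounded set, together with $h^n(0)=0$), so its contribution is $\int_0^s O(1)\,|r^n(\theta)|\,d\theta$. Collecting everything and applying Gronwall for $\tau\le\tau_2$ with $\tau_2$ sufficiently small gives $\|r^n\|_{L^\infty(0,\tau)}\lesssim\eps^2$.

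For $\dot r^n$ I repeat the same splitting starting from (\ref{dr tau gr}); the kernel is now $\cos(\omega(s-\theta))/\eps^2$, which is $O(\eps^{-2})$, but (i) integration by parts on the oscillatory pieces once again gains $\eps^2$, giving an $O(1)$ contribution, (ii) the $\eps^2u^n$ piece contributes $O(1)$ by direct size estimate, and (iii) $|h^n|\lesssim|r^n|\lesssim\eps^2$ from the previous step kills the $\eps^{-2}$ in the kernel, so this piece is $O(1)$ as well. The $\cos(\omega s)\dot r^n(0)$ term is plainly $O(1)$. Finally, $\ddot r^n$ is obtained algebraically from the second equation of (\ref{pLSADz31}): $\eps^2\ddot r^n=-(\alpha+1/\eps^2)r^n-f_r^n-\eps^2 u^n$, and since every term on the right is $O(1)$, we conclude $\|\ddot r^n\|_{L^\infty}\lesssim\eps^{-2}$, as required.

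The main obstacle is the accounting of $\eps$-powers in the oscillatory Duhamel integrals: without the nonresonance bound $|\pm\omega\pm(2k+1)/\eps^2|\gtrsim 1/\eps^2$ and the vanishing property $h^n(0)=0$, one could not close the Gronwall estimate at the $O(\eps^2)$ level for $r^n$, and the subsequent bounds on $\dot r^n$ and $\ddot r^n$ would be worse by an $\eps^{-2}$ factor and would not match the scaling (\ref{lm2_1}).
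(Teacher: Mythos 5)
Your proof is correct and follows essentially the same route as the paper: conservation of $|z_\pm^n|$ plus the explicitly integrable phase equation for the $z_\pm^n$ bounds, then Duhamel for $r^n$ with integration by parts on the nonresonant oscillatory pieces (the paper packages your phase bound $|\pm\omega\pm(2k+1)/\eps^2|\gtrsim\eps^{-2}$ into the antiderivative $T_k(\theta)=O(\eps^2)$), the Lipschitz bound $|h^n|\lesssim|r^n|$ closed by a Gronwall/bootstrap argument, and $\ddot r^n$ read off algebraically from the equation. The only point to make explicit is that the mean-value estimate $|h^n(\theta)|\lesssim|r^n(\theta)|$ requires an a priori bound keeping $r^n$ in the set where $g$ is Lipschitz, which is why the paper phrases the closing step as a bootstrap (continuity) argument rather than a bare Gronwall inequality.
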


\begin{proof}
From (\ref{FSW-i51}), noticing (\ref{FSWDbound}),
 (\ref{F_ini1}) and (\ref{z_pm exact}), we obtain
\begin{equation}\label{lm2dz}
\left|z_{\pm}^{(0)}\right|\lesssim1, \qquad\qquad\left|\dot{z}_{\pm}^{(0)}\right|\lesssim1,\qquad \qquad
\left\|\frac{d^m}{dt^m} {z}_\pm^n\right\|_{L^\infty(0,\infty)}\lesssim1,\qquad m=0,1,2,3.
\end{equation}
To estimate $r^n(s)$ in (\ref{pLSADz31}), using the variation-of-constant formula,
noting (\ref{F_ini1}), (\ref{gr def}) and (\ref{gkpm321}), we get
\begin{equation}\label{VCFr s}
r^n(s)=\frac{\sin(\omega s)}{\omega}\dot{r}^{(0)}-
\sum_{k=1}^p\left[I^n_{k,+}(s)+\overline{I^n_{k,-}(s)}\right]-J^n(s), \qquad s\ge0,
\end{equation}
where
\begin{equation} \label{Ikpmns2}
  \left\{
  \begin{split}
    &I_{k,\pm}^n(s):= \int_0^s\frac{\sin(\omega(s-\theta))}
  {\eps^2\omega}\fe^{i(2k+1)\theta/\eps^2}g_{k,\pm}^n(\theta) d\theta,\\
    &J^n(s):= \int_0^s\frac{\sin(\omega(s-\theta))}{\eps^2\omega}
    \left[h^n(\theta)+\eps^2u^n(\theta)\right]d\theta, \qquad s\ge0.
  \end{split}
  \right.
\end{equation}
Plugging (\ref{lm2dz}) into (\ref{F_ini1}) and using the triangle inequality, we have
\begin{equation}\label{r0876}
\left|\dot{r}^{(0)}\right|\leq\left|\dot{{z}}_+^{(0)}\right|+\left|\dot{ {z}}_-^{(0)}\right|\lesssim1.
\end{equation}
Let
\begin{equation}\label{Tktht4}
 T_k(\theta)=\frac{\eps^2\fe^{i(2k+1)\theta/\eps^2}}{\eps^4\omega^2-(2k+1)^2}
\left[\cos\left(\omega(s-\theta)\right)+\frac{i(2k+1)}{\eps^2\omega}
\sin\left(\omega(s-\theta)\right)\right]=O(\eps^2),
\end{equation}
then we have
\begin{equation}\label{T_k}
\frac{d}{d\theta}T_k(\theta)=\frac{\sin(\omega(s-\theta))}
{\eps^2\omega}\fe^{i(2k+1)\theta/\eps^2}=O(1),\qquad k=1,2,\ldots,p.
\end{equation}
Plugging (\ref{T_k}) into (\ref{Ikpmns2}), noticing (\ref{Tktht4}),
(\ref{gkpm321}), (\ref{g_k def}) and (\ref{lm2dz}), we get
\begin{eqnarray}\label{Ik}
\left|I^n_{k,\pm}(s)\right|&=&\left|\int_0^sg_{k,\pm}^n
(\theta)\frac{d}{d\theta}T_k(\theta)d\theta\right|
=\left|\left.g_{k,\pm}^n(\theta)T_k(\theta)\right|_0^s-
\int_0^sT_k(\theta)\frac{d}{d\theta}g_{k,\pm}^n(\theta)\,d\theta\right|\nonumber\\
&\lesssim&\eps^2+\int_0^s\eps^2\,ds=\eps^2(1+s), \qquad s\ge0.
\end{eqnarray}
From (\ref{Ikpmns2}), noting (\ref{fr_tilde}),
(\ref{gkpm321}), (\ref{T_k}) and (\ref{lm2dz}), we obtain for $s\ge0$
\begin{equation} \label{Jns654}
\left|J^n(s)\right|\lesssim \int_0^s\left[\eps^2|u^n(\theta)|+|h^n(\theta)|\right]d\theta
\lesssim \eps^2 s+\int_0^s\left|h\left( {z}_+^n(\theta),{z}_-^n(\theta),
{r}^n(\theta);\theta\right)\right|d\theta.
\end{equation}
Plugging (\ref{Ik}), (\ref{r0876}) and (\ref{Jns654}) into (\ref{VCFr s}), we have
\begin{equation} \label{rns987}
|{r}^n(s)|\lesssim \eps^2(1+s)+\int_0^s\left|h\left( {z}_+^n(\theta),{z}_-^n(\theta),
{r}^n(\theta);\theta\right) \right|d\theta, \qquad s\ge0.
\end{equation}
By using the bootstrap argument to (\ref{rns987}) \cite{Tao}, noting (\ref{lm2dz}) and (\ref{h def}),
there exists a constant $\tau_2>0$ independent of $\eps$ and  $n$, such that for
$0<\tau\leq\tau_2$ and all $n=0,1,\ldots,\frac{T}{\tau}-1$,
\begin{equation}\label{rns864}
\left\| {r}^n\right\|_{L^\infty(0,\tau)}
\lesssim\eps^2, \qquad \left\|\dot{ {r}}^n\right\|_{L^\infty(0,\tau)}
\lesssim1,\qquad \left\|\ddot{ {r}}^n\right\|_{L^\infty(0,\tau)}
\lesssim\eps^{-2}.
\end{equation}
The proof is completed by combining (\ref{lm2dz}) and (\ref{rns864}). \qed
\end{proof}

\begin{lemma}(Estimate on local error $\xi^{n+1}$)\label{lm:local_error_F}
Under the same assumption as in Lemma \ref{lm: reg y},
for any $n=0,1,\ldots,\frac{T}{\tau}-1,$, we have the following two independent bounds
\begin{equation}\label{A_pre2}
  \mcE\left(\xi^{n+1},\dot{\xi}^{n+1}\right)\lesssim \frac{\tau^6}{\eps^6},\qquad
  \mcE\left(\xi^{n+1},\dot{\xi}^{n+1}\right)\lesssim \tau^2\eps^2, \qquad 0\leq\tau\leq \tau_2.
\end{equation}
\end{lemma}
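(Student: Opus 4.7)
The structural observation driving everything is that the MTI-FA scheme integrates the $z_\pm^n(s)$ equations of the MDFA (\ref{pLSADz31}) \emph{exactly} through (\ref{zpmnt876})--(\ref{dz_pm exact}); only the remainder equation for $r^n(s)$ is discretized, via the Gautschi-type quadrature (\ref{quad6543}) and the trapezoidal rule (\ref{quad64321}). Thus $z_\pm^{n+1}=z_\pm^n(\tau)$ and $\dot z_\pm^{n+1}=\dot z_\pm^n(\tau)$, where $z_\pm^n$ are the exact MDFA solutions issuing from $(y^n,\dot y^n)$, and in the reconstruction (\ref{IFSW}) the oscillatory phases and $z_\pm$ contributions cancel, leaving $\xi^{n+1}=r^n(\tau)-r^{n+1}$ and $\dot\xi^{n+1}=\dot r^n(\tau)-\dot r^{n+1}$. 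The lemma therefore reduces to estimating the quadrature errors in the EWI for $r^n$.

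Comparing (\ref{VCFr})--(\ref{quad Ik J}) with the scheme (\ref{IFSW})--(\ref{FS_r}), I would decompose the local error into three pieces: (i) the Gautschi quadrature errors $E^I_{k,\pm}:=I^n_{k,\pm}-p_kg^n_{k,\pm}(0)-q_k\dot g^n_{k,\pm}(0)$ and their $\dot I$-analogues; (ii) the trapezoidal errors in $J^n$ and $\dot J^n$; and (iii) an additional coupling contribution $\frac{\tau}{2\eps^2}(h^{n+1}-h^n(\tau))$ in $\dot r^{n+1}$ that stems from evaluating $h$ at the numerical endpoint $y^{n+1}$ rather than the exact one. For each piece I would establish \emph{two} independent bounds: a ``classical'' one, and one that harvests an $\eps^2$ from the oscillatory structure.

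For (i), Lemma \ref{lm:reg_F} gives the Taylor remainder bound $|g^n_{k,\pm}(\theta)-g^n_{k,\pm}(0)-\theta\dot g^n_{k,\pm}(0)|\lesssim\theta^2$. Combined with the crude kernel bounds $|\sin(\omega(\tau-\theta))/(\eps^2\omega)|\le 1$ and $|\cos(\omega(\tau-\theta))/\eps^2|\le\eps^{-2}$, this yields $|E^I_{k,\pm}|\lesssim\tau^3$ and $|\dot E^I_{k,\pm}|\lesssim\tau^3/\eps^2$. The sharper bound is obtained by integrating by parts once using the primitive $T_k(\theta)=O(\eps^2)$ from (\ref{Tktht4})--(\ref{T_k}), whose smallness relies on the non-resonance of the combined frequencies $\omega\pm(2k+1)/\eps^2$ for $k\ge 1$; this produces $|E^I_{k,\pm}|\lesssim\eps^2\tau^2$, and an analogous $O(1)$ primitive for the $\cos/\eps^2$ kernel gives $|\dot E^I_{k,\pm}|\lesssim\tau^2$. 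For (ii), the standard trapezoidal bound $O(\tau^3\|f''\|_{L^\infty})$---whose dominant contribution comes from the $\omega^2\sim\eps^{-4}$ factor on the kernel multiplied by $|h^n+\eps^2u^n|\lesssim\eps^2$ from Lemma \ref{lm:reg_F}---yields $|E^J|\lesssim\tau^3/\eps^2$ and $|\dot E^J|\lesssim\tau^3/\eps^4$. Alternatively, direct estimation with $|h^n+\eps^2u^n|\lesssim\eps^2$ and the kernel bounds shows that both the integral and its trapezoidal approximation are $O(\tau\eps^2)$ (resp.\ $O(\tau)$), so $|E^J|\lesssim\tau\eps^2$ and $|\dot E^J|\lesssim\tau$. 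Finally, (iii) is handled by Lipschitz continuity of $f$: since $y^{n+1}-r^{n+1}=\tilde y^n(\tau)-r^n(\tau)$ identically, one has $|h^{n+1}-h^n(\tau)|\lesssim|\xi^{n+1}|$, so the coupling contributes at most $(\tau/\eps^2)|\xi^{n+1}|$ to $|\dot\xi^{n+1}|$; bootstrapping with the $|\xi^{n+1}|$-estimate already obtained, this is $\lesssim\tau^4/\eps^4$ (resp.\ $\tau^2$) and hence subdominant.

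Assembling the pieces gives $|\xi^{n+1}|\lesssim\min\{\tau^3/\eps^2,\tau\eps^2\}$ and $|\dot\xi^{n+1}|\lesssim\min\{\tau^3/\eps^4,\tau\}$; substituting into $\mcE(\xi^{n+1},\dot\xi^{n+1})=\eps^2|\dot\xi^{n+1}|^2+(\alpha+\eps^{-2})|\xi^{n+1}|^2$ produces the two stated bounds $\mcE\lesssim\tau^6/\eps^6$ and $\mcE\lesssim\tau^2\eps^2$. The main obstacle is the integration-by-parts step used to gain the $\eps^2$ in the Gautschi error: it must correctly handle two genuinely oscillatory factors $e^{i(2k+1)\theta/\eps^2}$ and $\sin/\cos(\omega(\tau-\theta))$, exploit non-resonance of their combined frequencies for every $k\ge 1$, and deliver constants uniform in $\eps$ and $n$. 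A secondary subtle point is that the estimate for $|\dot\xi^{n+1}|$ has to be established \emph{after} that for $|\xi^{n+1}|$ so that the implicit $h^{n+1}$-coupling can be closed.
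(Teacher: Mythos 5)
Your proposal is correct and follows essentially the same route as the paper's proof: the exact integration of the $z_\pm^n$ equations reduces $\xi^{n+1}$ to $r^n(\tau)-r^{n+1}$, the Gautschi errors are bounded two ways (Taylor remainder for $O(\tau^3)$, integration by parts against the $O(\eps^2)$ primitive $T_k$ for $O(\tau^2\eps^2)$), the trapezoidal term is likewise bounded by $\tau^3/\eps^2$ and $\tau\eps^2$, and the two bounds for $\mcE$ follow. Your explicit treatment of the $h^{n+1}$-versus-$h^n(\tau)$ coupling in $\dot r^{n+1}$ is a detail the paper leaves implicit under ``similar to the above,'' and your ordering (estimate $\xi^{n+1}$ first, then close the bootstrap for $\dot\xi^{n+1}$) is the right way to make that step rigorous.
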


\begin{proof}
Similar to sections 2\&3, we can solve the problem (\ref{B1}) analytically via MDFA
and obtain
\begin{equation}\label{tdyns345}
\tilde{y}^n(\tau)=e^{i\tau/\eps^2}z_+^n(\tau)+e^{-i\tau/\eps^2}\overline{z_-^n}(\tau)
+r^n(\tau),
\end{equation}
where $z_\pm^n(\tau)$ and $r^n(\tau)$ are defined as (\ref{zpmnt876}) and (\ref{VCFr}), respectively
with $\phi_1^n=y^n$ and $\phi_2^n=\eps^2 \dot{y}^n$ in (\ref{FSW-i51}).
Plugging (\ref{tdyns345}) and (\ref{IFSW}) into (\ref{xins12}), noting (\ref{FS_r}), we have
\begin{eqnarray}\label{xi}
\xi^{n+1}&=&\fe^{i\tau/\eps^2}\left( {z}_+^n(\tau)-z_+^{n+1}\right)+
\fe^{-i\tau/\eps^2}\left(\overline{ {z}_-^n(\tau)}-\overline{z_-^{n+1}}\right)
+ {r}^n(\tau)-r^{n+1}\nonumber\\
&=&{r}^n(\tau)-r^{n+1}=\mathcal{J}^n+
\sum_{k=1}^p\left[\mathcal{I}^n_{k,+}+\overline{\mathcal{I}^n_{k,-}}\right],
\end{eqnarray}
where
\begin{equation} \label{mJn367}
\mathcal{J}^n:=\frac{\tau\sin(\omega\tau)}{2\omega}u^{(0)}-J^n,
\qquad \mathcal{I}^n_{k,\pm}:= p_k g^{(0)}_{k,\pm}+q_k\dot{g}^{(0)}_{k,\pm}-I^n_{k,\pm},\qquad k=1,\ldots,p.
\end{equation}
From (\ref{mJn367}), noting (\ref{Ikpmn965}) and (\ref{quad Ik J})
where the Gautschi type or trapezoidal quadrature
was used to approximate integrals and using the Taylor expansion,
we obtain for $0<\tau\le \tau_2$
\begin{equation}\label{C6}
\left|\mathcal{I}_{k,\pm}^n\right| =\left|\frac{1}{2}\int_0^\tau \theta^2 \frac{\sin(\omega(\tau-\theta))}{\eps^2\omega}\fe^{i(2k+1)\theta/\eps^2}
\ddot{g}_{k,\pm}^n(t(\theta)) d\theta\right|\lesssim \int_0^\tau \theta^2d\theta
\lesssim \tau^3,
\end{equation}
where $0\leq t(\theta)\leq\tau$. In addition, similar to (\ref{Ik})
by using integration by parts, we have
\begin{equation}\label{Ink}
\left|\mathcal{I}_{k,\pm}^n\right|=\left|\frac{1}{2}\int_0^\tau \theta^2\ddot{g}_{k,\pm}(t(\theta))\frac{d}{d\theta}T_k(\theta)
d\theta\right|\lesssim \tau^2\eps^2, \qquad 0<\tau\le \tau_2.
\end{equation}
Similarly, we can get two independent bounds for $\mathcal{J}^n$ as
\begin{equation}\label{Pn-2}
\left|\mathcal{J}^n \right|\lesssim \frac{\tau^3}{\eps^2},
\qquad \qquad \left|\mathcal{J}^n\right|\lesssim\tau\eps^2, \qquad 0<\tau\le \tau_2.
\end{equation}
From (\ref{xi}), noting (\ref{C6}), (\ref{Ink}) and (\ref{Pn-2}), we get two independent bounds
for $\xi^{n+1}$ as
\begin{equation} \label{xin786}
  \left|\xi^{n+1}\right|
  \lesssim \tau^3+\frac{\tau^3}{\eps^2}\lesssim \frac{\tau^3}{\eps^2},
  \qquad  \left|\xi^{n+1}\right|
  \lesssim \eps^2\tau +\tau^2\eps^2 \lesssim \tau \eps^2, \qquad 0<\tau\le \tau_2.
\end{equation}
Similar to the above, we can obtain two independent bounds for $\dot{\xi}^{n+1}$ as
\begin{equation}\label{dxin}
  \left|\dot{\xi}^{n+1}\right|\lesssim \frac{\tau^3}{\eps^4},\qquad\qquad
 \left|\dot{\xi}^{n+1}\right|\lesssim \tau, \qquad 0<\tau\le \tau_2.
\end{equation}
Then (\ref{A_pre2}) is a combination of (\ref{xin786})
and (\ref{dxin}) by noting (\ref{error_energy}). \qed
\end{proof}

Combining Lemmas \ref{lm1}, \ref{lm: reg y}, \ref{lm:stab_AP} and \ref{lm:local_error_F},
we can prove the Theorem \ref{tm1}.

\smallskip

\noindent {\it Proof of Theorem \ref{tm1}}
The proof proceeds by using the energy method with the help of the method of
mathematical induction for establishing uniform boundedness of
$y^n$ and $\dot{y}^n$ \cite{Bao5,Cai1,Dong}.

Since $e^0=0$ and $\dot{e}^0=0$, $y^0=y(0)$ and $\dot{y}^0=\dot{y}(0)$,
noting (\ref{C0}), we can get
that (\ref{FSWDerror1})-(\ref{FSWDbound}) hold for $n=0$.

Now assuming that (\ref{FSWDerror1})-(\ref{FSWDbound})
are valid for all $0\leq n\leq m-1\leq \frac{T}{\tau}-1$,
we need to show they are still valid for $n=m$.
From Lemmas \ref{lm1} and \ref{lm:stab_AP}, we have
\begin{eqnarray}
  \mcE\left(e^{n+1},\dot{e}^{n+1}\right)-\mcE\left(e^n,\dot{e}^n\right)
  \lesssim \tau \mcE \left(e^n,\dot{e}^n\right) + \frac{1}{\tau}\mcE
  \left(\xi^{n+1},\dot{\xi}^{n+1}\right),\qquad 0<\tau\leq \tau_1.
\end{eqnarray}
Summing the above inequality for $n=0,1,\ldots,m-1$,
noticing $\mcE\left(e^{0},\dot{e}^{0}\right)=0$, we obtain
\begin{equation}\label{emem367}
\mcE\left(e^{m},\dot{e}^{m}\right)\lesssim \tau
\sum_{l=1}^{m-1}\mcE\left(e^{l},\dot{e}^{l}\right)+
\frac{1}{\tau}\sum_{l=1}^m \mcE\left(\xi^l,\dot{\xi}^l\right).
\end{equation}
Applying the discrete Gronwall inequality to (\ref{emem367}), we get
\begin{equation}\label{emem478}
\mcE \left(e^{m},\dot{e}^{m}\right)\lesssim \frac{1}{\tau}
\sum_{l=1}^m \mcE\left(\xi^l,\dot{\xi}^l\right).
\end{equation}
Plugging (\ref{A_pre2}) into (\ref{emem478}), we get two independent bounds as
\begin{equation}\label{emem941}
\mcE \left(e^{m},\dot{e}^{m}\right)\lesssim
\frac{T}{\tau^2}\frac{\tau^6}{\eps^6}\lesssim \frac{\tau^4}{\eps^6} ,\quad
\mcE \left(e^{m},\dot{e}^{m}\right)\lesssim \frac{T}{\tau^2} \tau^2\eps^2\lesssim
\eps^2,\quad 0<\tau\leq \min\{\tau_1,\tau_2\}.
\end{equation}
Combing (\ref{emem941}) and (\ref{error_energy}), we get
\[|e^m|\le \eps \sqrt{\mcE \left(e^{m},\dot{e}^{m}\right)}\lesssim \frac{\tau^2}{\eps^2},
\quad \eps^2|\dot{e}^m|\le
\eps \sqrt{\mcE \left(e^{m},\dot{e}^{m}\right)}\lesssim \frac{\tau^2}{\eps^2},
\qquad |e^m|\lesssim \eps^2, \quad \eps^2|\dot{e}^m|\lesssim \eps^2,
\]
which immediately imply that (\ref{FSWDerror1}) is valid for $n=m$.
In addition, we have \cite{Deg,Jin}
\begin{equation}
\left|y^m\right|-C_0\leq \left|e^m\right| \lesssim \min_{0<\eps\le 1}
\left\{\frac{\tau^2}{\eps^2},\tau^2\right\}\lesssim \tau,\qquad
  \eps^2\left|\dot{y}^m\right|-C_0\leq \eps^2 \left|\dot{e}^m\right|\lesssim \tau.
\end{equation}
Thus there exists a $\tau_3>0$ independent of $\eps$ and $m$, such that
\[\left|y^m\right|\le C_0+1, \qquad |\dot{y}^m|\leq \frac{C_0+1}{\eps^2}.\]
Thus (\ref{FSWDbound}) is valid for $n=m$. By the method of mathematical induction,
the proof is completed if we choose
$\tau_0=\min\left\{\tau_1,\tau_2,\tau_3\right\}$. \qed

\section{Proof of Theorem \ref{tm2}}
\label{Ap proof MTI_F}
The proof is quite similar to that of Theorem \ref{tm1}.
Following the same notations introduced before, let $y^n$ and $\dot{y}^n$ in (\ref{B1})
be obtained by the method MTI-F and
assume (\ref{LSADbound}) holds, then
the regularity and stability results, i.e., Lemmas \ref{lm1}-\ref{lm:stab_AP},
for the auxiliary problem (\ref{B1}) still hold.

\begin{lemma}(A prior estimate of MDF)\label{lm:reg_L}
Let ${z}_\pm^n(s)$ and $r^n(s)$ be the solution of
the MDF (\ref{pLSADz1}) under the initial conditions (\ref{FSW-i21})
with $z_\pm^n(0)=z_\pm^{(0)}$, $\dot{z}_\pm^n(0)=\dot{z}_\pm^{(0)}$
and $\dot{r}^n(0)=\dot{r}^{(0)}$
defined in (\ref{F_ini1}).
Under the assumption (\ref{LSADbound}), there exists a
constant $\tau_4>0$ independent of $\eps$ and
 $n$, such that for $0<\tau\leq\tau_4$
and all $n=0,1,\ldots,\frac{T}{\tau}-1$
  \begin{eqnarray}\label{reg F}
   \left\|\frac{d^m}{dt^m} {z}_\pm^n\right\|_{L^\infty(0,\tau)}+
   \eps^2\left\|\frac{d^3}{dt^3} {z}_\pm^n\right\|_{L^\infty(0,\tau)}+
   \eps^{2m-2}\left\|\frac{d^m}{dt^m} {r}^n\right\|_{L^\infty(0,\tau)}\lesssim1,
   \quad m=0,1,2. \qquad
  \end{eqnarray}
\end{lemma}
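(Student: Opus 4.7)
The proof follows the same overall strategy as Lemma \ref{lm:reg_F}, but the $z_\pm^n$ subsystem in (\ref{pLSADz1}) now carries the stiff term $\eps^2\ddot z_\pm^n$, which prevents the use of the explicit formula (\ref{z_pm exact}) and makes the desired bound $\|\ddot z_\pm^n\|_{L^\infty(0,\tau)}\lesssim 1$ delicate. First, the assumption (\ref{LSADbound}) together with (\ref{FSW-i21}) and (\ref{F_ini1}) immediately yields $|z_\pm^{(0)}|,\,|\dot z_\pm^{(0)}|,\,|\dot r^{(0)}|\lesssim 1$ and the \emph{well-preparation identity} $\dot z_\pm^{(0)}=\frac{i}{2}[\alpha z_\pm^{(0)}+f_\pm(z_+^{(0)},z_-^{(0)})]$, which, when inserted into the first equation of (\ref{pLSADz1}) at $s=0$, forces $\ddot z_\pm^n(0)=0$. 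Using the variation-of-constants formula (\ref{2VCFz}) and the uniform bounds $|a(s)|,\,|b(s)|,\,\eps^2|b(s)|\lesssim 1$ (which follow from the explicit formulas in (\ref{asbslbd}) since $\eps^2(\lambda_--\lambda_+)=2\sqrt{1+\alpha\eps^2}$ stays bounded away from zero), a bootstrap argument of exactly the type carried out for $r^n$ in Lemma \ref{lm:reg_F} produces $\|z_\pm^n\|_{L^\infty(0,\tau)}\lesssim 1$ for all $\tau$ below some threshold $\tau_4$.

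The heart of the argument is the estimate on the first and second derivatives of $z_\pm^n$, which gains a factor of $\eps^2$ over what is immediately furnished by the ODE. I introduce the auxiliary quantity
\[
\psi_\pm^n(s):=\dot z_\pm^n(s)-\frac{i}{2}\left[\alpha z_\pm^n(s)+f_\pm(z_+^n(s),z_-^n(s))\right],\qquad 0\le s\le\tau,
\]
which vanishes at $s=0$ by the well-preparation and plays the role of a corrector to the MDFA limit. A short manipulation of the first equation in (\ref{pLSADz1}) yields the identity $\eps^2\ddot z_\pm^n=-2i\psi_\pm^n$ and shows that $\psi_\pm^n$ itself solves
\[
\dot\psi_\pm^n(s)=-i\omega_0\,\psi_\pm^n(s)+G_\pm^n(s),\qquad \omega_0=\frac{2}{\eps^2}+\frac{\alpha}{2},
\]
where $G_\pm^n$ is a smooth expression in $z_\pm^n,\dot z_\pm^n,\dot f_\pm^n$. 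Writing $\dot z_\pm^n=\psi_\pm^n+\frac{i}{2}[\alpha z_\pm^n+f_\pm^n]$, a crude Gronwall inequality applied to the integral form of $\psi_\pm^n$ first yields $|\psi_\pm^n(s)|\lesssim s\lesssim\tau$ and hence $\|\dot z_\pm^n\|_{L^\infty(0,\tau)}\lesssim 1$. To upgrade this to $\|\ddot z_\pm^n\|_{L^\infty(0,\tau)}\lesssim 1$, one integration by parts in the VOC representation of $\psi_\pm^n$ against the fast phase $e^{-i\omega_0 s}$ produces the sharper bound $|\psi_\pm^n(s)|\lesssim\omega_0^{-1}(1+s\,\|\dot G_\pm^n\|_{L^\infty(0,\tau)})$. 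Since $\dot G_\pm^n$ depends linearly on $\ddot z_\pm^n$, writing $M:=\max_\pm\|\ddot z_\pm^n\|_{L^\infty(0,\tau)}$ and using $|\ddot z_\pm^n|=2|\psi_\pm^n|/\eps^2$ closes the estimate into a self-consistent inequality $M\lesssim 1+\tau M$, so that $M\lesssim 1$ for $\tau$ sufficiently small. The last estimate $\eps^2\|\dddot z_\pm^n\|_{L^\infty(0,\tau)}\lesssim 1$ then follows by differentiating the first equation of (\ref{pLSADz1}) once more and inserting the bounds already obtained.

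For the remainder $r^n$ the argument is essentially a transcription of the proof of Lemma \ref{lm:reg_F}: the $r^n$-equation in (\ref{pLSADz1}) differs from its MDFA counterpart in (\ref{pLSADz31}) only by the absence of the $\eps^2 u^n$ term, which only simplifies the estimate. Combining the representation (\ref{VCFr s}) with $u^n$ dropped, the integration-by-parts device (\ref{Tktht4})--(\ref{Ik}) that exploits the fast exponentials $e^{i(2k+1)\theta/\eps^2}$, and a bootstrap on $h(z_+^n,z_-^n,r^n;s)$ via (\ref{h def}), one obtains $\|r^n\|_{L^\infty(0,\tau)}\lesssim\eps^2$; differentiating the VOC formula and the ODE then deliver $\|\dot r^n\|_{L^\infty(0,\tau)}\lesssim 1$ and $\eps^2\|\ddot r^n\|_{L^\infty(0,\tau)}\lesssim 1$, completing the proof. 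The principal technical obstacle is the self-consistent bootstrap for $\ddot z_\pm^n$: it is precisely there that the well-preparation built into (\ref{FSW-i21}) must be exploited through oscillatory integration to upgrade the trivial ODE estimate $\|\ddot z_\pm^n\|=O(\eps^{-2})$ into the sharp $O(1)$ bound required by the error analysis of MTI-F.
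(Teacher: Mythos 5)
Your proposal is correct, and it is worth noting how it relates to what the paper actually does. For the remainder $r^n$ you follow exactly the paper's route: the variation-of-constants representation (\ref{VCFr s}) with the $u^n$ term dropped, the integration-by-parts device (\ref{Tktht4})--(\ref{Ik}) against the fast phases $\fe^{i(2k+1)\theta/\eps^2}$, and a bootstrap on $h$, yielding $\|r^n\|_{L^\infty(0,\tau)}\lesssim\eps^2$, $\|\dot r^n\|_{L^\infty(0,\tau)}\lesssim 1$ and $\eps^2\|\ddot r^n\|_{L^\infty(0,\tau)}\lesssim 1$. For the $z_\pm^n$ part, however, the paper gives no argument at all --- it simply refers the reader to the appendix of \cite{Cai1} --- whereas you supply a complete, self-contained proof. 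Your corrector $\psi_\pm^n=\dot z_\pm^n-\frac{i}{2}[\alpha z_\pm^n+f_\pm^n]$ is well chosen: the identities $\eps^2\ddot z_\pm^n=-2i\psi_\pm^n$ and $\dot\psi_\pm^n=-i\omega_0\psi_\pm^n+G_\pm^n$ with $\omega_0=\frac{2}{\eps^2}+\frac{\alpha}{2}$ check out, $\psi_\pm^n(0)=0$ is precisely the well-preparedness built into (\ref{F_ini1}), and the one integration by parts against $\fe^{-i\omega_0(s-\theta)}$ buys the factor $\omega_0^{-1}=O(\eps^2)$ that converts the trivial estimate $\|\ddot z_\pm^n\|=O(\eps^{-2})$ into the self-consistent inequality $M\lesssim 1+\tau M$ and hence $M\lesssim 1$; the bound $\eps^2\|\dddot z_\pm^n\|_{L^\infty(0,\tau)}\lesssim 1$ then falls out of one differentiation of the first equation in (\ref{pLSADz1}). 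This is the same mechanism (nondegenerate stiff frequency plus well-prepared data) that underlies the cited estimate in \cite{Cai1}, so conceptually the two proofs agree; what your version buys is that the lemma no longer rests on an external reference, at the cost of carrying out the oscillatory bootstrap explicitly. One cosmetic remark: in your opening paragraph the uniform bounds you need from (\ref{asbslbd}) are $|a(s)|\le 1$ and $|b(s)|\le 1$ (since $\eps^2(\lambda_--\lambda_+)=2\sqrt{1+\alpha\eps^2}\ge 2$ and $|\lambda_+|+|\lambda_-|=|\lambda_+-\lambda_-|$); the extra condition $\eps^2|b(s)|\lesssim 1$ you list is subsumed by these and is what controls the term $\eps^2 b(s)\dot z_\pm^n(0)$ in (\ref{2VCFz}).
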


\begin{proof}
For the estimates on $z^n_{\pm}(s)$, we refer the readers
to \cite[Appendix]{Cai1} and omit the details here for brevity.
For the estimates on $r^n(s)$, we can have a similar
variation-of-constant formula as (\ref{VCFr s}) but without the
term $u^n$ defined in $J^n(s)$.
Then the rest part of the proof can be done in
the same manner as Lemma \ref{lm:reg_F}.
\end{proof}

\begin{lemma}(Estimate on local error $\xi^{n+1}$)\label{lm:local_error_L}
Under the same assumption as in Lemma \ref{lm: reg y} and assume (\ref{LSADbound}) holds,
for any $n=0,1,\ldots,\frac{T}{\tau}-1$, we have two independent bounds
\begin{equation}
  \mcE\left(\xi^{n+1},\dot{\xi}^{n+1}\right)\lesssim
  \frac{\tau^6}{\eps^6},\qquad
  \mcE\left(\xi^{n+1},\dot{\xi}^{n+1}\right)\lesssim
  \frac{\tau^6}{\eps^2}+\tau^2\eps^2,\qquad 0<\tau\leq \tau_4.
  \label{lm:local_error_L result}
\end{equation}
\end{lemma}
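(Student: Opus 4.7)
Since MTI-F and MTI-FA share the overall decomposition--solution--reconstruction design, my plan is to mirror the proof of Lemma \ref{lm:local_error_F} and highlight only the two places where MTI-F genuinely differs. The first difference is that MTI-F now approximates the $z_\pm^n$ subsystem in (\ref{pLSADz1}) by the Gautschi-type quadrature (\ref{gen F appr}) rather than solving it exactly, so an extra $z_\pm$ error has to be estimated; the second is that the MDF $r^n$ equation in (\ref{pLSADz1}) carries no residual $u^n$ correction and $h^n(0)=0$, so the trapezoidal contribution collapses. Writing $\tilde y^n(\tau)$ through the same MDF reconstruction used by the scheme and subtracting (\ref{F s}), I obtain the decomposition
\begin{equation*}
\xi^{n+1}=\fe^{i\tau/\eps^2}\bigl(z_+^n(\tau)-z_+^{n+1}\bigr)+\fe^{-i\tau/\eps^2}\overline{\bigl(z_-^n(\tau)-z_-^{n+1}\bigr)}+\bigl(r^n(\tau)-r^{n+1}\bigr),
\end{equation*}
together with its derivative counterpart for $\dot\xi^{n+1}$; the MDF functions $z_\pm^n, r^n$ satisfy the a priori bounds of Lemma \ref{lm:reg_L}.

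For the $z_\pm$ contribution, subtracting (\ref{gen F appr}) from (\ref{2VCFz tau}) and (\ref{2VCFdz}) leaves Taylor residuals of the form $\int_0^\tau b(\tau-\theta)[f_\pm^n(\theta)-f_\pm^n(0)-\theta\dot f_\pm^n(0)]\,d\theta$ and the analogue with $\dot b$ replacing $b$. Using $|f_\pm^n(\theta)-f_\pm^n(0)-\theta\dot f_\pm^n(0)|\lesssim\theta^2$ from Lemma \ref{lm:reg_L}, together with the uniform bounds $|b(s)|\lesssim 1$ and $|\dot b(s)|\lesssim 1/\eps^2$ read off from (\ref{asbslbd}), I obtain $|z_\pm^n(\tau)-z_\pm^{n+1}|\lesssim \tau^3$ and $|\dot z_\pm^n(\tau)-\dot z_\pm^{n+1}|\lesssim \tau^3/\eps^2$. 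For the $r$ contribution I would import the MTI-FA analysis (\ref{C6})--(\ref{Pn-2}) almost verbatim, retaining the two independent bounds $|\mathcal I_{k,\pm}^n|\lesssim\tau^3$ (crude Taylor) and $|\mathcal I_{k,\pm}^n|\lesssim\tau^2\eps^2$ (integration by parts against the primitive $T_k$ of (\ref{T_k})), and replacing the $\mathcal J^n$ estimate by the observation that, since there is no $u^n$ term and $h^n(0)=0$, the sine-weighted trapezoidal residual is controlled by $|h^n|\lesssim|r^n|\lesssim\eps^2$ via Lemma \ref{lm:reg_L}. Combined with the analogous estimates for the cosine-weighted residual, this yields the two independent bounds $|r^n(\tau)-r^{n+1}|\lesssim\min\{\tau^3/\eps^2,\ \tau\eps^2\}$ and $|\dot r^n(\tau)-\dot r^{n+1}|\lesssim\min\{\tau^3/\eps^4,\ \tau\}$.

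Finally, plugging all four component estimates into $\mcE(\xi^{n+1},\dot\xi^{n+1})\lesssim |\xi^{n+1}|^2/\eps^2+\eps^2|\dot\xi^{n+1}|^2$, the crude choice of each bound produces $\mcE\lesssim\tau^6/\eps^6$; taking the refined bound on the $r$ contribution (which gives $\tau^2\eps^2$) while keeping the $z_\pm$ bound as is (which gives $\tau^6/\eps^2$) produces $\mcE\lesssim\tau^6/\eps^2+\tau^2\eps^2$, which is exactly (\ref{lm:local_error_L result}). The main obstacle is the integration-by-parts step for each oscillating integral $\mathcal I_{k,\pm}^n$: the primitive $T_k$ from (\ref{T_k}) is of size $O(\eps^2)$ while its derivative is only $O(1)$, and this $\eps^2$-gain is precisely what makes the uniform-in-$\eps$ branch of the local-error bound possible. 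The $z_\pm$ quadrature contribution is otherwise routine, but it is what produces the extra $\tau^6/\eps^2$ term in the second bound that is absent from the MTI-FA result in Lemma \ref{lm:local_error_F}.
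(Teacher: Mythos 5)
Your proposal is correct and follows essentially the same route as the paper's proof: decompose $\xi^{n+1}$ into the $z_\pm$ quadrature errors and the $r$ error, bound the former by the Taylor residual $\frac{1}{2}\int_0^\tau\theta^2 b(\tau-\theta)\ddot f_\pm^n(t(\theta))\,d\theta\lesssim\tau^3$ using Lemma \ref{lm:reg_L} and the sizes of $b,\dot b$ from (\ref{asbslbd}), import the two independent bounds on $r^n(\tau)-r^{n+1}$ from the MTI-FA analysis (with $u^n=0$), and combine via (\ref{error_energy}). The component estimates, the integration-by-parts mechanism for the uniform-in-$\eps$ branch, and the final assembly all coincide with the paper's argument.
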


\begin{proof}
Again, similar to sections 2\&3, we can solve the problem (\ref{B1}) analytically via MDF
and obtain that $\tilde{y}^n(\tau)$ satisfies (\ref{tdyns345})
with $z_\pm^n(\tau)$ and $r^n(\tau)$  defined as (\ref{2VCFz}) and (\ref{r gr}) with $u^n=0$, respectively
with $\phi_1^n=y^n$ and $\phi_2^n=\eps^2 \dot{y}^n$ in (\ref{FSW-i21}).
Plugging (\ref{tdyns345}) and (\ref{IFSW}) into (\ref{xins12}), using the triangle inequality, we get
\begin{eqnarray}\label{xi6789}
|\xi^{n+1}|&=&\left|\fe^{i\tau/\eps^2}\left( {z}_+^n(\tau)-z_+^{n+1}\right)+
\fe^{-i\tau/\eps^2}\left(\overline{ {z}_-^n(\tau)}-\overline{z_-^{n+1}}\right)
+ {r}^n(\tau)-r^{n+1}\right|\nonumber\\
&\le&\left| {z}_+^n(\tau)-z_+^{n+1}\right|+\left|{z}_-^n(\tau)
-z_-^{n+1}\right|+\left|{r}^n(\tau)-r^{n+1}\right|.
\end{eqnarray}
Similar to the proof in Lemma \ref{lm:local_error_F},
we obtain the following two independent bounds
\begin{eqnarray} \label{rnrnp1378}
\left|{r}^n(\tau)-r^{n+1}\right|\lesssim \frac{\tau^3}{\eps^2}, \qquad
\left|{r}^n(\tau)-r^{n+1}\right|\lesssim \tau \eps^2, \qquad 0<\tau\le\tau_4.
\end{eqnarray}
Subtracting $z_\pm^{n+1}$ in (\ref{F s}) from (\ref{2VCFz tau}), using
the Taylor expansion, and noting (\ref{asbslbd}), (\ref{fnpms613}) and (\ref{reg F}), we get
\begin{equation}\label{zpmn479}
 \left|{z}_\pm^n(\tau)-z_\pm^{n+1}\right|=\frac{1}{2}\left|\int_0^{\tau} \theta^2 b(\tau-\theta)\ddot{f}_\pm^n\left(t(\theta)\right)d\theta\right|\lesssim
 \int_0^\tau \theta^2\,d\theta\lesssim \tau^3,
\end{equation}
where $0\leq t(\theta)\leq\tau$. Inserting (\ref{zpmn479}) and (\ref{rnrnp1378}) into
(\ref{xi6789}), we obtain two independent bounds for $\xi^{n+1}$ as
\begin{equation}\label{xin1854}
 \left|\xi^{n+1}\right|\lesssim\frac{\tau^3}{\eps^2},\qquad
 \left|\xi^{n+1}\right|\lesssim\tau^3+\tau\eps^2, \qquad 0<\tau\leq \tau_4.
\end{equation}
Similarly, we can get two independent bounds for $\dot{\xi}^{n+1}$ as
\begin{equation}\label{dxin1876}
\left|\dot{\xi}^{n+1}\right|\lesssim\frac{\tau^3}{\eps^4},\qquad
\left|\dot{\xi}^{n+1}\right|\lesssim\frac{\tau^3+\tau\eps^2}{\eps^2},
\qquad 0<\tau\leq \tau_4.
\end{equation}
Then (\ref{lm:local_error_L result}) is a combination of (\ref{xin1854})
and (\ref{dxin1876}) by noting (\ref{error_energy}).\qed
\end{proof}

Combining Lemmas \ref{lm1}, \ref{lm: reg y},
\ref{lm:stab_AP} and \ref{lm:local_error_L}, we can prove the Theorem \ref{tm2}.

\smallskip

\noindent {\it Proof of Theorem \ref{tm2}} The argument proceeds in
analogous lines as for the Theorem \ref{tm1} and we omit the details here for brevity. \qed

\section*{Acknowledgments}
This work was supported by the Singapore A*STAR SERC  PSF-Grant	1321202067.
Part of this work was done when the authors were visiting the Institute for
Mathematical Science, National University of Singapore in 2011/12.


\end{document}